\newtheorem {theorem}{Theorem}
\newtheorem {lemma}[theorem]{Lemma}
\newtheorem {proposition}[theorem]{Proposition}
\newtheorem {corollary}[theorem]{Corollary}
\newtheorem {definition}[theorem]{Definition}
\theoremstyle{remark}
\newtheorem {remark}[theorem]{Remark}
\numberwithin{equation}{section}
\numberwithin{theorem}{section}
\newlist{pcases}{enumerate}{1}
\setlist[pcases]{
  label=\bf{Case~\arabic*:}\protect\thiscase.~,
  ref=\arabic*,
  align=left,
  labelsep=0pt,
  leftmargin=0pt,
  labelwidth=0pt,
  parsep=0pt
}
\newcommand{\case}[1][]{%
  \if\relax\detokenize{#1}\relax
    \def\thiscase{}%
  \else
    \def\thiscase{~#1}%
  \fi
  \item
}
\newcommand{\ZZ}{\mathbb{Z}}
\newcommand{\Z}{\mathbb{Z}}
\newcommand{\F}{\mathbb{F}}
\newcommand{\Q}{\mathbb{Q}}
\newcommand{\spc}{\operatorname{Spin}^c}
\newcommand{\spinc}{\mathfrak{s}}
\newcommand{\spint}{\mathfrak{t}}
\newcommand{\tower}{\mathcal{T}^+}
\newcommand{\rpthree}{\mathbb{RP}^3}
\newcommand{\dcover}{\Sigma}
\newcommand{\img}{\operatorname{Im}}
\newcommand{\hfp}{HF^+}
\newcommand{\hfred}{HF^+_{\mathrm{red}}}
\newcommand{\hfinfty}{HF^\infty}
\newcommand{\hfcirc}{HF^\circ}
\newcommand{\gr}{\operatorname{gr}}
  \begingroup\color{blue}  Steven \; $\blacktriangleright$ \;}{%
  \begingroup\color{red} Tye \; $\blacktriangleright$ \;}{%
\title{Quasi-alternating links with small determinant}
\date{}
\begin{document}

\author{Tye Lidman}
\email{tlid@math.utexas.edu}
\address{Mathematics Department\\The University of Texas at Austin\\RLM 8.100\\2515 Speedway Stop C1200\\Austin, TX 78712-1202}

\author{Steven Sivek}
\email{ssivek@math.princeton.edu}
\address{Department of Mathematics\\Princeton University\\Fine Hall, Washington Road\\Princeton, NJ 08544-1000}

\begin{abstract}
Quasi-alternating links of determinant 1, 2, 3, and 5 were previously classified by Greene and Teragaito, who showed that the only such links are two-bridge.  In this paper, we extend this result by showing that all quasi-alternating links of determinant at most 7 are connected sums of two-bridge links, which is optimal since there are quasi-alternating links not of this form for all larger determinants.  We achieve this by studying their branched double covers and characterizing distance-one surgeries between lens spaces of small order, leading to a classification of formal L-spaces with order at most 7.
\end{abstract}

\maketitle

%!TEX root = smalldeterminant.tex

\section{Introduction}\label{sec:intro}

Quasi-alternating links are a natural generalization of non-split alternating links which have received a considerable amount of attention over the past decade.  First introduced by Ozsv\'ath-Szab\'o in \cite{OzsvathBranched}, these links provide a more general family of links whose Khovanov homology and knot Floer homology are particularly simple -- they are {\em homologically thin} \cite{ManolescuOzsvath} -- and they have exhibited a number of other behaviors found in alternating links.  This can also be translated into topological applications: for example, any branched double cover of a quasi-alternating link is an example of a manifold which cannot admit a co-orientable taut foliation \cite{OzsvathBranched}.  

For many invariants such as the Alexander polynomial, there are only finitely many alternating knots which attain a fixed value, and this has had applications to Dehn surgery questions (most recently, \cite{Gainullin,LackenbyPurcell}).  For quasi-alternating links, it is harder to obtain such finiteness results.  For instance, it is still unknown if there are only finitely many quasi-alternating links of any fixed determinant.

In \cite{GreeneNonQA}, Greene classifies quasi-alternating links with determinant one, two, and three as the unknot, Hopf link, and the two trefoils respectively.  Teragaito \cite[Theorem~1.9]{Teragaito-K} completes the classification of quasi-alternating links with determinant 5 by a different method -- they are the torus knots $T_{2,\pm 5}$ and the figure eight -- and points out that the classification of quasi-alternating links of determinant 4 is still unknown despite partial results in this case \cite{Teragaito-Q}, but he conjectures that the only such links should be the torus links $T_{2,\pm4}$ and the connected sum of two Hopf links.  The goal of this paper is to verify this and to also classify quasi-alternating links with determinants up to 7 as well.  Specifically, we prove:

\begin{theorem}\label{thm:classification}
Let $L$ be a quasi-alternating link with determinant at most 7.  Then $L$ is either two-bridge or a connected sum of two-bridge links.
\end{theorem}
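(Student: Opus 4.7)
The plan is to prove Theorem~\ref{thm:classification} by strong induction on $\det(L)$. The base cases $\det(L) \in \{1,2,3,5\}$ are exactly the classifications of Greene and Teragaito recalled in the introduction, so I focus on $\det(L) \in \{4,6,7\}$. The strategy is to translate the problem into one about the branched double cover $\dcover(L)$: this is a rational homology sphere with $|H_1(\dcover(L);\Z)| = \det(L) \leq 7$, and since $L$ is quasi-alternating it is an L-space (in particular, a formal L-space in the sense the paper presumably sets up earlier). Since two-bridge links are characterized (by Hodgson--Rubinstein) as the links whose branched double covers are lens spaces, and since $\dcover(L_1 \# L_2) = \dcover(L_1)\#\dcover(L_2)$, the theorem will follow once we show that $\dcover(L)$ is a connected sum of lens spaces, together with an equivariant-sphere argument to descend a connected-sum decomposition of $\dcover(L)$ back to $L$.

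The heart of the argument, then, is the classification claim: every formal L-space of order at most $7$ is a connected sum of lens spaces. To prove this, I would exploit the recursive nature of the quasi-alternating definition. At a quasi-alternating crossing, $\dcover(L)$, $\dcover(L_0)$, and $\dcover(L_1)$ form a surgery triad on a common knot with $\det(L) = \det(L_0) + \det(L_1)$, so the three manifolds are related by distance-one Dehn surgeries. By the inductive hypothesis, $\dcover(L_0)$ and $\dcover(L_1)$ are each connected sums of lens spaces of order strictly less than $\det(L) \leq 7$. Thus I need to enumerate the 3-manifolds of order $\leq 7$ that arise as a distance-one surgery on a knot in a connected sum of small-order lens spaces, and show that all such manifolds are themselves connected sums of lens spaces. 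This is exactly the ``characterization of distance-one surgeries between lens spaces of small order'' advertised in the abstract.

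The main obstacle will be this distance-one surgery classification. The natural tools are d-invariants (which are highly constrained because the manifolds involved are L-spaces, and which transform predictably under a triad), the Casson--Walker invariant (which also behaves nicely under distance-one surgery), and Greene-style changemaker lattice obstructions when the ambient manifold is a lens space. Because the orders involved are tiny, the parameter space of possible $(p,q)$ for the lens spaces and surgery slopes is small, so after sufficient invariants are brought to bear the possibilities should collapse to a short, verifiable list. Reducible surgeries in this range will also need to be handled, likely via Matignon--Sayari-type restrictions and direct case analysis on prime summands.

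Once $\dcover(L)$ is known to be a connected sum of lens spaces, the final step is to pull this decomposition back to $L$. If $\dcover(L)$ is irreducible it is a lens space and Hodgson--Rubinstein gives that $L$ is two-bridge. If $\dcover(L)$ is reducible, the equivariant sphere theorem applied to the hyperelliptic involution provides an invariant essential sphere whose quotient is a decomposing sphere or disk for $L$; since a quasi-alternating link is nonsplit, this forces $L = L_1 \# L_2$ with $\dcover(L_i)$ a nontrivial connected summand, and induction (or a direct appeal to Hodgson--Rubinstein on each summand) finishes the argument.
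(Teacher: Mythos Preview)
Your overall strategy matches the paper's exactly: pass to the branched double cover, prove that every formal L-space of order at most $7$ is a connected sum of lens spaces by analyzing the surgery triads arising from the quasi-alternating recursion, and then descend via Hodgson--Rubinstein together with an equivariant-sphere/Kim--Tollefson argument for the reducible case.

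The one substantive divergence is in the toolkit you propose for the distance-one surgery classification. You suggest $d$-invariants, Casson--Walker, and ``Greene-style changemaker lattice obstructions.'' The paper does use $d$-invariants and Casson--Walker, but changemaker lattices play no role; the decisive ingredient is instead Gainullin's surgery characterization of the unknot for nullhomologous knots in L-spaces (extending Kronheimer--Mrowka--Ozsv\'ath--Szab\'o from $S^3$). This is precisely what handles the cases where the knot lives in a lens space such as $\rpthree$ or $\pm L(3,1)$ rather than in $S^3$---for instance, ruling out nontrivial knots in $\rpthree$ with a distance-one $\rpthree$ surgery (the $2+2=4$ case), or with an $\rpthree\#\rpthree$ surgery (part of $2+4=6$). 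Changemaker methods are tailored to knots in $S^3$ and would not obviously cover these cases. So your plan is sound, but you should swap in Gainullin's theorem as the key technical input; the paper's introduction flags it explicitly as the result that makes the extension beyond Teragaito's partial $\det=4$ work possible.
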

In particular, $L$ is either the unknot, the figure-eight knot, the torus link $T_{2,n}$ with $2 \leq |n| \leq 7$, a connected sum of two Hopf links, a connected sum of a trefoil with a Hopf link, or either the $5_2$ knot or its mirror.  This answers a question of Teragaito \cite[Conjecture~1.10]{Teragaito-K}:
\begin{corollary}
If $L$ is a quasi-alternating link which is not alternating, then $\det(L) \geq 8$.
\end{corollary}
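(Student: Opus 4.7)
The plan is to derive the corollary directly from Theorem \ref{thm:classification} by contraposition: it suffices to show that every link on the list produced by that theorem is alternating. In other words, I need to verify that two-bridge links are alternating and that the class of alternating links is closed under connected sum, after which the corollary is immediate.

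First I would invoke the classical fact, going back to Schubert's work on rational tangles, that every two-bridge link admits an alternating diagram. Concretely, a two-bridge link is the numerator closure of a rational tangle, and the standard diagram obtained from a continued-fraction expansion of the defining slope is alternating (the twist regions alternate in sign after normalizing the signs of the continued-fraction entries). This handles the two-bridge case of the conclusion of Theorem \ref{thm:classification}.

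Next I would observe that the connected sum of alternating links is alternating: given alternating diagrams $D_1, D_2$ for $L_1, L_2$, one performs the connected sum along an arc on the outer boundary of each diagram, and by reflecting one summand if necessary one can arrange that the over/under pattern matches up at the splicing site, producing an alternating diagram for $L_1 \# L_2$. Iterating, any connected sum of two-bridge links is alternating.

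Putting these together, if $L$ is quasi-alternating with $\det(L) \leq 7$, then by Theorem \ref{thm:classification} it is either two-bridge or a connected sum of two-bridge links, hence alternating in either case. Contrapositively, a non-alternating quasi-alternating link must have determinant at least $8$. The substantive obstacle is of course Theorem \ref{thm:classification} itself; once that is established the corollary is essentially a formal consequence together with well-known facts about alternating diagrams.
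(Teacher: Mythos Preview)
Your argument is correct and matches the paper's intended derivation: the corollary is stated immediately after Theorem~\ref{thm:classification} with no separate proof, the implicit reasoning being exactly what you spell out, namely that two-bridge links and their connected sums are alternating. You have simply filled in the standard justifications (rational tangle diagrams are alternating; alternating is preserved under connected sum) that the paper leaves unstated.
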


Theorem~\ref{thm:classification} also implies the following about positive knots.
\begin{corollary}
Let $K$ be a nontrivial positive knot of genus at most 2 and determinant at most 7.  Then $K$ is either $T_{2,3}$, $T_{2,5}$, or $5_2$.
\end{corollary}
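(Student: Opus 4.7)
The strategy is to reduce the corollary to Theorem~\ref{thm:classification} via the intermediate claim that a nontrivial positive knot $K$ of genus at most $2$ is quasi-alternating.

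The first and main step is to verify this intermediate claim; I expect this to be the principal obstacle. By Cromwell's theorem, such a $K$ has Alexander polynomial of degree $2g(K) \le 4$, and combined with $|\Delta_K(-1)| \le 7$ the polynomial is quite constrained. I would either appeal to an existing result on positive knots of small genus being quasi-alternating (for example, via thinness of Khovanov or knot Floer homology in this range, or by exhibiting a quasi-alternating decomposition directly on a positive diagram) or verify QA case by case on the short list of possibilities permitted by the Alexander polynomial constraint.

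Granting Step~1, Theorem~\ref{thm:classification} gives that $K$ is either two-bridge or a connected sum of two-bridge links. Next I would rule out nontrivial connected sums: if $K = K_1 \# K_2$ with each $K_i$ nontrivial, additivity of Seifert genus forces $g(K_1) = g(K_2) = 1$, and every nontrivial genus-one knot has Alexander polynomial $at - (2a-1) + at^{-1}$ with $a \ne 0$, giving $\det(K_i) = |4a-1| \ge 3$. By multiplicativity of the determinant under connected sum, $\det(K) \ge 9 > 7$, a contradiction.

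Hence $K$ is two-bridge. From the enumeration immediately following Theorem~\ref{thm:classification}, the two-bridge knots with determinant at most $7$ are, up to mirror image, the unknot, $T_{2,3}$, $T_{2,5}$, $T_{2,7}$, the figure-eight $4_1$, and $5_2$. I would then eliminate $T_{2,7}$ using $g(T_{2,n}) = (n-1)/2$; the figure-eight by its vanishing signature (a nontrivial positive knot satisfies $|\sigma| \ge 2$, so in particular no amphichiral knot is positive); and the mirrors of $T_{2,3}$, $T_{2,5}$, and $5_2$ because they are negative rather than positive. The remaining candidates, each of which does admit a positive diagram of the required genus and determinant, are precisely $T_{2,3}$, $T_{2,5}$, and $5_2$.
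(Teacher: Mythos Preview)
Your approach is essentially the paper's: the key input is that nontrivial positive knots of genus at most~$2$ are quasi-alternating, which the paper obtains by citing Jong--Kishimoto \cite{JongKishimoto}, and then one applies Theorem~\ref{thm:classification} and reads off the short list. Your proposal simply makes explicit the enumeration step that the paper leaves to the reader.

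One small slip to fix when you write it up: the claim that \emph{every} nontrivial genus-one knot has Alexander polynomial $at-(2a-1)+at^{-1}$ with $a\neq 0$ is false in general (untwisted Whitehead doubles are genus-one with trivial Alexander polynomial). In your situation the summands are two-bridge knots by Theorem~\ref{thm:classification}, so they automatically have determinant at least~$3$, and the inequality $\det(K)\geq 9$ follows without any appeal to the Alexander polynomial.
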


\begin{proof}
Positive knots of genus at most 2 are quasi-alternating \cite{JongKishimoto}.  Now apply Theorem~\ref{thm:classification} to conclude that $K$ must be one of the knots listed above.
\end{proof}

The main idea of the proof is that used originally by Greene, and by Teragaito \cite{Teragaito-Q} in the determinant 4 case, which is to lift to the branched double cover and rephrase the problem in terms of Dehn surgery.  Greene and Levine \cite{GreeneLevine} define a notion of \emph{formal L-space} (see Definition~\ref{def:formal-l-space}) which is meant to be a 3-manifold analogue of quasi-alternating links; and indeed the branched double cover of any quasi-alternating link is a formal L-space.  Theorem~\ref{thm:classification} will be a consequence of the following classification result.

\begin{theorem}\label{thm:formal-l-spaces}
If $Y$ is a formal L-space with $|H_1(Y;\ZZ)| \leq 7$, then $Y$ is a connected sum of lens spaces.
\end{theorem}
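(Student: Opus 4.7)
The plan is to argue by strong induction on $n = |H_1(Y;\ZZ)|$. For the base case $n=1$, $Y$ is a formal L-space that is also a $\ZZ$-homology sphere, and by Greene's classification such a manifold must be $S^3$, which is (vacuously) a connected sum of lens spaces.

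For the inductive step, suppose $n \in \{2,\dots,7\}$ and the theorem holds for all formal L-spaces of strictly smaller order. Unwrapping the defining inductive structure of a formal L-space, $Y$ fits into a distance-one filling triple: there is a compact 3-manifold $M$ with torus boundary and slopes $\alpha,\beta,\gamma$ on $\partial M$ of pairwise distance one such that $Y \cong M(\gamma)$, both $M(\alpha)$ and $M(\beta)$ are formal L-spaces, and
\[
|H_1(M(\alpha);\ZZ)| + |H_1(M(\beta);\ZZ)| \;=\; |H_1(Y;\ZZ)| \;=\; n.
\]
Because each of $|H_1(M(\alpha))|, |H_1(M(\beta))|$ is a positive integer strictly less than $n$, the inductive hypothesis applies and both $M(\alpha)$ and $M(\beta)$ are connected sums of lens spaces.

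The work therefore reduces to the following structural statement, which I regard as the heart of the argument: whenever $M$ has torus boundary, $\alpha,\beta,\gamma$ are pairwise distance-one slopes on $\partial M$, and the two fillings $M(\alpha), M(\beta)$ are connected sums of lens spaces with $|H_1(M(\alpha))|+|H_1(M(\beta))| \leq 7$, then $M(\gamma)$ must also be a connected sum of lens spaces. My approach is to enumerate the pairs of positive integers $(a,b)$ with $a+b \leq 7$, list the (short) collection of connected sums of lens spaces of each such order, and in each configuration determine which distance-one surgery triples are topologically realizable. The primary tools are Heegaard Floer correction-term inequalities, which in a distance-one triple force sharp constraints on the $d$-invariants of all three fillings; Donaldson-style lattice-embedding obstructions for manifolds that bound negative-definite 4-manifolds (which formal L-spaces of small order tend to do); and known classifications of Dehn surgeries yielding lens spaces or connected sums thereof, combined with exceptional-surgery theorems when $M$ is hyperbolic.

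The main obstacle will be the subcase in which both $M(\alpha)$ and $M(\beta)$ are honest (irreducible) lens spaces of small order, since this is precisely the problem of classifying distance-one surgery triples all of whose fillings are small lens spaces -- the technical centerpiece promised in the abstract. Reducible fillings constitute a secondary source of complication, particularly those involving $\rpthree$ (which admits a rich family of lens-space--distance-one relationships), but for $n \leq 7$ the reducible candidates form a short finite list -- essentially iterated connected sums of $\rpthree$ and of $L(3,\cdot)$ together with a handful of others -- so the enumeration remains tractable and explicit.
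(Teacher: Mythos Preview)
Your inductive framework is exactly the one the paper uses: prove the result for $n=1,2,3$ via Greene, and for $4\le n\le 7$ unwind the formal L-space structure to a triad $(Y,Y_0,Y_1)$ with $\det(Y_0)+\det(Y_1)=n$, apply the inductive hypothesis to $Y_0,Y_1$, and then finish by a finite case analysis on the pair $(\det(Y_0),\det(Y_1))$. So at the level of architecture there is no daylight between your plan and the paper's Section~\ref{sec:classification}.

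The genuine gap is in the execution of the ``structural statement'' you call the heart of the argument. What you have written is a plan, not a proof, and the toolkit you propose (Donaldson-style lattice embeddings, exceptional-surgery theorems for hyperbolic $M$, generic $d$-invariant inequalities) is not the one that actually closes the cases, and it is not clear it can. Concretely: the case $2+2=4$ requires showing that no nontrivial knot in $\rpthree$ has a nontrivial distance-one $\rpthree$ surgery; this is precisely the obstacle Teragaito could not get past, and the paper resolves it only by invoking Gainullin's recent surgery characterization of the unknot for nullhomologous knots in L-spaces (Theorem~\ref{thm:surgery-characterization} and Corollary~\ref{cor:lens-cosmetic}). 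The same ingredient is needed again in the $2+4$ and $3+3$ subcases for $n=6$. The ``two small irreducible lens spaces'' subcases you flag as the main obstacle ($2+3$, $3+4$, and $2+5$) are handled not by lattice obstructions or hyperbolic exceptional-surgery bounds but by the bespoke Heegaard Floer arguments of Sections~\ref{sec:consecutive} and~\ref{sec:surgery25} (Theorems~\ref{thm:lp1-fillings} and~\ref{thm:rp3-l5q}), which combine Casson--Walker surgery formulas, grading computations in the surgery exact triangle, and again Gainullin's theorem to force $M$ to be a solid torus. Finally, the $1+6=7$ case with $Y_1=\pm L(3,1)\#\rpthree$ uses Greene's cabling theorem. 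None of these inputs appears in your list; without Gainullin's theorem in particular, the argument stalls already at $n=4$.
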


In fact, Teragaito nearly completes the determinant-4 classification of Theorem~\ref{thm:classification} in \cite[Lemma 2.3]{Teragaito-Q}; he could obtain the desired conclusion if he knew that non-trivial knots in $\rpthree$ cannot have non-trivial distance-one $\rpthree$ surgeries.  The key technical result which allows us to extend Teragaito's work is a recent theorem of Gainullin \cite{Gainullin-complement} giving a Dehn surgery characterization of the unknot for nullhomologous knots in L-spaces, which extends a result of Kronheimer-Mrowka-Ozsv\'ath-Szab\'o for knots in $S^3$ \cite{KMOS}.  We remark that while the case of $|H_1(Y;\mathbb{Z})| \leq 3$ in Theorem~\ref{thm:formal-l-spaces} follows from Greene's work, Teragaito's classification of determinant 5 quasi-alternating links does not determine the order 5 formal L-spaces.

One could likely use the arguments in this paper to extend the classification of quasi-alternating links to slightly larger determinants.  However, the obstruction to continuing this process for all values of the determinant is that one needs the classification of lens space surgeries on knots (the {\em Berge conjecture}) as well as a complete list of links whose branched double cover gives a fixed manifold.       

Moreover, for any $n \geq 8$ there are quasi-alternating links of determinant $n$ which are not connected sums of two-bridge links.  Indeed, if $k\geq 2$ then the $(2,k,-3)$-pretzel link is quasi-alternating \cite[Theorem~3.2]{CK} but not alternating.  One can show that its branched double cover is $(k+6)$-surgery on the right handed trefoil, which implies that $\det(P(2,k,-3)) = k+6$.  Note that this branched double cover is a formal L-space.  This is because $S^3_7(T_{2,3})$ is the lens space $L(7,4)$, the branched double cover of a two-bridge link, hence quasi-alternating, and $S^3_{p+1}(K)$ is a formal L-space whenever $S^3_p(K)$ is.

\begin{remark}
Here, and throughout the rest of this paper, we use the convention that $L(p,q)$ is $\frac{p}{q}$-surgery on the unknot in $S^3$.  We will also write $\F = \Z/2\Z$.
\end{remark}

\subsection*{Outline} In Section~\ref{sec:background} we recall the definition and properties of quasi-alternating links, show that Theorem~\ref{thm:classification} follows from Theorem~\ref{thm:formal-l-spaces}, and review other related material necessary for the setup.  In Sections~\ref{sec:consecutive} and \ref{sec:surgery25} we prove some results characterizing when knot complements in lens spaces of small order can have distance-one fillings which are also lens spaces of small order.  Finally, we use these results in Section~\ref{sec:classification} to prove Theorem~\ref{thm:formal-l-spaces}.

\subsection*{Acknowledgments} We would like to thank Fyodor Gainullin for helpful discussions.  The first author was partially supported by NSF RTG grant DMS-1148490.  The second author was supported by NSF grants DMS-1204387 and DMS-1506157.
%!TEX root = smalldeterminant.tex

\section{Quasi-alternating links, branched double covers, and surgery}\label{sec:background}

We begin by recalling the definitions of quasi-alternating links and formal L-spaces.
\begin{definition}[\cite{OzsvathBranched}] \label{def:qa}
The set $\mathcal{Q}$ of \emph{quasi-alternating links} is the smallest set of links in $S^3$ containing the unknot such that for any link $L$, if $L$ admits a diagram with a crossing whose two resolutions $L_0,L_1$ satisfy
\begin{itemize}
\item $L_0,L_1 \in \mathcal{Q}$,
\item $\det(L) = \det(L_0) + \det(L_1)$,
\end{itemize}
then $L \in \mathcal{Q}$.
\end{definition}
In particular, all non-split alternating links are quasi-alternating  \cite{OzsvathBranched}, and $\mathcal{Q}$ is also closed under taking mirrors and connected sums.  (This last claim follows for $K \# L$ by induction on $\det(L)$: if $K \in \mathcal{Q}$ then $K \# U \in \mathcal{Q}$, and given resolutions $L_0,L_1$ of $L$ as above we have $K\#L_0, K\#L_1 \in \mathcal{Q}$ by hypothesis, so $K\#L\in\mathcal{Q}$ as well.)

We say a collection of closed, oriented 3-manifolds $(Y_1,Y_2,Y_3)$ forms a \emph{triad} if there is a 3-manifold $M$ with torus boundary and a collection of oriented curves $\gamma_1,\gamma_2,\gamma_3 \subset \partial M$ at pairwise distance 1 such that each $Y_i$ is the result of Dehn filling along $\gamma_i$.  This is precisely the condition under which the Heegaard Floer homologies of the $Y_i$ (in some order) fit into a surgery exact triangle.  We will define $\det(Y)$ to be $|H_1(Y;\ZZ)|$ if $b_1(Y) = 0$ and $0$ otherwise; note that if $L$ is a link then its branched double cover satisfies $\det(\dcover(L)) = \det(L)$.

\begin{definition}[{\cite[Section~7]{GreeneLevine}}] \label{def:formal-l-space}
The set $\mathcal{F}$ of \emph{formal L-spaces} is the smallest set of rational homology 3-spheres containing $S^3$ such that whenever $(Y,Y_0,Y_1)$ is a triad with $Y_0,Y_1\in\mathcal{F}$ and
\[ \det(Y) = \det(Y_0) + \det(Y_1), \]
we have $Y \in \mathcal{F}$ as well.
\end{definition}

This definition can be interpreted as a 3-manifold analogue of the notion of a quasi-alternating link.  Indeed, given any triple of links $(L,L_0,L_1)$ as in Definition~\ref{def:qa}, the branched double covers $(\dcover(L),\dcover(L_0),\dcover(L_1))$ form a triad, and $\dcover(U) = S^3$, so the branched double cover of any quasi-alternating link is a formal L-space.  It is easy to see that $\mathcal{F}$ contains all lens spaces and is closed under orientation reversal and taking connected sums.

We recall that we are interested in classifying quasi-alternating links and formal L-spaces with small determinant.  This classification has been carried out for determinant at most 3 by work of Greene.  

\begin{theorem}[\cite{GreeneNonQA}]\label{thm:greene-classification}
If $Y$ is a formal L-space with $\det(Y)$ equal to 1, 2, or 3, then $Y$ is $S^3$, $\rpthree$, or $\pm L(3,1)$ respectively.
\end{theorem}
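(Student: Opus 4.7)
The plan is to induct on $\det(Y)$. The base case $\det(Y) = 1$ is immediate: every formal L-space is a rational homology sphere and thus has positive determinant, so the recursive clause in Definition~\ref{def:formal-l-space} cannot produce a determinant-$1$ manifold from two formal L-spaces. Hence $Y$ must be the base element $S^3$.

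For $\det(Y) = 2$, any triad $(Y, Y_0, Y_1)$ witnessing $Y \in \mathcal{F}$ must satisfy $\det(Y_0) = \det(Y_1) = 1$, so by the base case $Y_0 = Y_1 = S^3$. Let $M$ denote the underlying $3$-manifold with torus boundary; then $M$ admits two distinct distance-one slopes both giving $S^3$. Reading the first filling as realizing $M$ as a knot exterior $S^3 \setminus \nu(K)$, the second slope becomes a nontrivial Dehn surgery on $K$ yielding $S^3$, and so by the Gordon--Luecke theorem $K$ must be the unknot. Thus $M$ is a solid torus, and a direct enumeration of the third distance-one slope, subject to the determinant-additivity condition $\det(Y) = 2$, forces $Y = \rpthree$.

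For $\det(Y) = 3$, the triad satisfies $\{\det(Y_0), \det(Y_1)\} = \{1, 2\}$, so by induction one filling is $S^3$ and the other is $\rpthree$. The $S^3$-filling again identifies $M$ with $S^3 \setminus \nu(K)$ for some knot $K$; the $\rpthree$-filling, being at distance one from the meridian, corresponds to an integer surgery, necessarily of slope $\pm 2$ since $|H_1(\rpthree)| = 2$. By the Kronheimer--Mrowka--Ozsv\'ath--Szab\'o surgery characterization of the unknot (or its extension by Gainullin cited in the introduction), $\pm 2$-surgery on $K$ yielding $L(2,1) = \rpthree$ forces $K$ to be the unknot. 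Hence $M$ is again a solid torus, and the remaining distance-one slope gives $Y = \pm L(3,1)$.

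The main technical input at each recursive step is the identification of $M$ as a solid torus: the $\det = 2$ case is handled by Gordon--Luecke, while the $\det = 3$ case requires the deeper surgery characterization of the unknot. Once $M$ is a solid torus, the remainder is elementary slope arithmetic combined with the determinant-additivity condition, which rules out slopes producing $S^1 \times S^2$ (whose determinant is zero) or a second $S^3$ filling (whose determinant is one).
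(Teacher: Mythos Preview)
Your argument is correct and follows essentially the same route as the paper's recollection of Greene's proof: the $\det=1$ case is forced by the definition, the $\det=2$ case reduces to Gordon--Luecke via the two $S^3$ fillings, and the $\det=3$ case reduces to the KMOS unknot characterization via the $\rpthree$ filling on a knot in $S^3$. The only cosmetic difference is that the paper cites the lens-space version of KMOS (Theorem~\ref{thm:kmos}) directly rather than the general surgery characterization, but these are equivalent here.
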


In order to deduce the classification of quasi-alternating links from the classification of formal L-spaces, we appeal to the following.

\begin{theorem}[Hodgson-Rubinstein \cite{HR}]\label{thm:hr}
If $L\subset S^3$ is a link whose branched double cover is the lens space $L(p,q)$, then $L$ is the two-bridge link with continued fraction equal to $p/q$.
\end{theorem}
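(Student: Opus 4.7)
The plan is to lift the question to the branched double cover and to analyze the covering involution $\tau\colon L(p,q)\to L(p,q)$, whose fixed set is the preimage of $L$ and whose quotient is $S^3$.  Two-bridge links arise, almost by definition, from a standard involution on $L(p,q)$ obtained by rotating each solid torus of the genus-one Heegaard splitting by $\pi$ about a diameter, so my aim is to show that \emph{any} orientation-preserving involution of $L(p,q)$ with quotient $S^3$ is conjugate to this standard model; the branch set $L$ is then forced to be the two-bridge link associated to the continued fraction $p/q$.

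The first step is to put $\tau$ in standard position with respect to a Heegaard splitting.  Bonahon's theorem provides a genus-one Heegaard splitting $L(p,q)=V_0\cup_T V_1$ that is unique up to isotopy, and by equivariant Heegaard theory (going back to Birman--Hilden and Edmonds, and leaning on equivariant disk and sphere theorems) one can isotope $\tau$ so that it preserves this splitting and restricts to an involution on the Heegaard torus $T$; the case in which the two handlebodies are swapped is ruled out by a short homological argument, so $\tau(V_i)=V_i$ for $i=0,1$.

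The second step is to classify the restriction $\tau|_{V_i}$.  An orientation-preserving involution of a solid torus whose quotient is a three-ball must, by the classification of finite cyclic actions on solid tori, be conjugate to the standard rotation by $\pi$ about a diameter, whose fixed set is a pair of properly embedded arcs that are unknotted and boundary-parallel in $V_i$.  These arcs descend to two trivial arcs in the quotient ball $V_i/\tau\cong B^3$, and gluing the two quotient balls along $T/\tau\cong S^2$ exhibits $(S^3,L)$ as the union of two trivial tangles, which is exactly a two-bridge presentation.

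Finally, one identifies the slope.  The meridian disks of $V_0$ and $V_1$ have boundaries whose intersection pattern on $T$ is encoded by the matrix defining the lens space, and after projecting to the quotient sphere $T/\tau$ these curves determine Schubert's classifying rational number for the resulting two-bridge link, which reads off as $p/q$.  The main obstacle is the first step: producing an equivariant genus-one Heegaard splitting really uses the uniqueness of such splittings for lens spaces together with equivariant isotopy machinery to make the isotopy itself $\tau$-equivariant, and this is the technical heart of the Hodgson--Rubinstein argument.
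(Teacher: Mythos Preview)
The paper does not prove this theorem at all; it is quoted from Hodgson--Rubinstein \cite{HR} and used as a black box (together with Theorem~\ref{thm:kt-prime}) solely to pass from the classification of formal L-spaces in Theorem~\ref{thm:formal-l-spaces} back down to a statement about links.  There is therefore no argument in the paper against which to measure your proposal.

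For what it is worth, your outline is a faithful high-level summary of what Hodgson and Rubinstein actually do: make the covering involution $\tau$ respect a genus-one Heegaard splitting, classify the resulting $\Z/2\Z$-actions on each solid torus, and read off the two-bridge presentation and its slope from the gluing.  One small point: in the handlebody-swapping case $\tau|_T$ is orientation-reversing on the Heegaard torus, so its fixed set can be a union of circles rather than isolated points, and ruling this out (or handling it) is not purely homological---Hodgson and Rubinstein treat it by a further geometric analysis.  But since the paper offers no proof of its own, there is nothing more to compare.
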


\begin{theorem}[Kim-Tollefson \cite{KimTollefson}]\label{thm:kt-prime}
If the branched double cover of a non-split link $L \subset S^3$ is a connected sum $Y_1 \# Y_2$, with each $Y_i$ prime, then $L$ is a connected sum $L_1 \# L_2$ of links such that $\dcover(L_1)=Y_1$ and $\dcover(L_2)=Y_2$.
\end{theorem}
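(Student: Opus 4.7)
The plan is to pass to the branched double cover $\Sigma(L)$, where the decomposition $Y_1 \# Y_2$ is witnessed by an essential 2-sphere, and to use the equivariant sphere theorem together with the deck involution $\tau\colon \Sigma(L) \to \Sigma(L)$ to descend such a sphere to a summing 2-sphere for $L$ in $S^3$. Recall that $\mathrm{Fix}(\tau) = \tilde L$, the preimage of $L$, and $\Sigma(L)/\tau = S^3$ with quotient branched over $L$.

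Assuming neither $Y_i$ is $S^3$ (else the statement is trivial), the primality of $Y_1, Y_2$ makes $\Sigma(L)$ reducible. The first step is to apply the equivariant sphere theorem of Meeks--Simon--Yau to obtain an essential 2-sphere $S' \subset \Sigma(L)$ which is either $\tau$-invariant or disjoint from $\tau(S')$. In the latter case $S'$ would project homeomorphically to a 2-sphere $\bar S \subset S^3$ disjoint from $L$; since $L$ is non-split, $\bar S$ would bound a 3-ball disjoint from $L$, which would lift to $\Sigma(L)$ to give a 3-ball bounded by $S'$, contradicting essentiality. So the $\tau$-invariant case prevails.

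Next I would analyze the restriction $\tau|_{S'}$, a smooth involution of $S^2$ whose fixed set is the transverse intersection $S' \cap \tilde L$, a finite set of points. The only possibilities from the classification of smooth involutions of $S^2$ compatible with a zero-dimensional fixed set are a rotation with two fixed points or the fixed-point-free antipodal map; the latter would give $S'/\tau = \mathbb{RP}^2$ embedding in $S^3$, which is impossible. Hence $\bar S := S'/\tau \subset S^3$ is a 2-sphere meeting $L$ transversely in exactly two points, and since $L$ is non-split this exhibits $L$ as a connected sum $L = L_1 \# L_2$, where each $L_i$ is obtained from the tangle $L \cap B_i$ (with $B_1, B_2$ the two sides of $\bar S$) by capping off with a trivial arc. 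Taking branched double covers respects connected sums, so $\Sigma(L_1) \# \Sigma(L_2) = \Sigma(L) = Y_1 \# Y_2$, and by the Kneser--Milnor uniqueness of prime decomposition we may relabel to obtain $\Sigma(L_i) = Y_i$.

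I expect the main difficulty to be the careful application of the equivariant sphere theorem: one needs to ensure that the essential invariant sphere $S'$ one extracts actually is a reducing sphere, and may well need to iterate, using the primality of the $Y_i$, to rule out invariant spheres that fail to realize the given decomposition. The subsequent case analysis of $\tau|_{S'}$ and the descent of $\bar S$ to a summing sphere in $S^3$ should then be relatively routine.
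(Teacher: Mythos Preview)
The paper does not prove this statement: it is quoted as a theorem of Kim--Tollefson and invoked as a black box when deducing Theorem~1.1 from Theorem~1.3. There is thus no proof in the paper to compare your proposal against.

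For what it is worth, your sketch follows the standard route to results of this type via the Meeks--Simon--Yau equivariant sphere theorem, and the overall outline is sound. One case is missing from your analysis of $\tau|_{S'}$, however. You assert that the fixed set of $\tau|_{S'}$ is the \emph{transverse} intersection $S'\cap\tilde L$ and hence finite, but transversality is not automatic: a smooth involution of $S^2$ can also be an orientation-reversing reflection with fixed set a full circle. This occurs exactly when $\tau$ swaps the two complementary regions of $S'$ in $\Sigma(L)$, in which case an entire component of $\tilde L$ lies on $S'$ and the image $S'/\tau$ in $S^3$ is a disk with boundary a component of $L$ rather than a $2$-sphere. You need to dispose of this case separately (for instance by showing such an $S'$ cannot be essential, or by arguing that the two sides of $S'$ would then be homeomorphic and using primality of the $Y_i$). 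Your closing caveat about possibly needing to iterate and invoke Kneser--Milnor uniqueness to match the specific summands $Y_1,Y_2$ is well taken.
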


We now show that Theorem~\ref{thm:classification} follows from Theorem~\ref{thm:formal-l-spaces}.  
\begin{proof}[Proof of Theorem~\ref{thm:classification}]
Recall that if $L$ is a quasi-alternating link, then $\dcover(L)$ is a formal L-space; if $\det(L) \leq 7$, then $\dcover(L)$ is a connected sum of lens spaces by Theorem~\ref{thm:formal-l-spaces}.  It follows that $L$ is a connected sum of two-bridge links, and these links are determined uniquely by the lens space summands, so Theorem~\ref{thm:classification} follows immediately from Theorem~\ref{thm:formal-l-spaces}.
\end{proof}

Thus, the remainder of the paper is devoted to proving Theorem~\ref{thm:formal-l-spaces}.  

\subsection{Some general facts about surgery}
The following lemma will be useful in the proof of Theorem~\ref{thm:formal-l-spaces}.  We first recall that a knot $K$ in a rational homology sphere $Y$ is {\em primitive} if it generates $H_1(Y)$.  If $M$ denotes the exterior of $K$, then primitivity implies that $H_1(M) \cong \mathbb{Z}$ and there exists a curve $\mu$ on $\partial M$ which represents the generator of $H_1(M)$.  Further, we have that $\mu$ and the rational longitude $\lambda$ form a basis for $H_1(\partial M)$, and so $\Delta(\mu,\lambda) = 1$.  (Recall that the rational longitude is the unique slope on the boundary of a rational homology solid torus $P$ which is torsion in $H_1(P)$.) Given an arbitrary slope $\alpha$, we have $|H_1(M(\alpha))| = \Delta(\alpha,\lambda)$.  

\begin{lemma}\label{lem:distance-one}
Let $K$ be a primitive knot in a rational homology sphere $Y$.  If a non-trivial filling on the exterior $M$ results in a rational homology sphere $Y'$, possibly homeomorphic to $Y$, then the distance from this filling slope to the trivial slope is a multiple of $\gcd(|H_1(Y)|, |H_1(Y')|)$.  In particular, if $|H_1(Y)|$ and $|H_1(Y')|$ are not relatively prime, then such a filling cannot have distance one from the trivial slope.
\end{lemma}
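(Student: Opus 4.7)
The plan is to work in the basis $\{\mu, \lambda\}$ of $H_1(\partial M)$ described in the paragraph preceding the lemma, where $\mu$ generates $H_1(M) \cong \mathbb{Z}$ and $\lambda$ is the rational longitude, which maps to zero in $H_1(M)$. In this basis the cited formula $|H_1(M(\gamma))| = \Delta(\gamma,\lambda)$ just says that if a slope $\gamma$ is written as $p\mu + q\lambda$, then $|H_1(M(\gamma))| = |p|$.

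First I would write the trivial filling slope as $\mu_K = p_0\mu + q_0\lambda$, so that $M(\mu_K) = Y$ and $|p_0| = |H_1(Y)|$, and the non-trivial filling slope as $\alpha = p_1\mu + q_1\lambda$, so that $M(\alpha) = Y'$ and $|p_1| = |H_1(Y')|$. Then I would compute the distance between the two slopes as the absolute value of the associated $2\times 2$ determinant,
\[ \Delta(\mu_K, \alpha) = |p_0 q_1 - p_1 q_0|. \]
Setting $d = \gcd(|H_1(Y)|, |H_1(Y')|) = \gcd(p_0, p_1)$, the fact that $d$ divides both $p_0$ and $p_1$ forces it to divide the integer combination $p_0 q_1 - p_1 q_0$, which proves the first assertion. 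The "in particular" clause is then immediate, since if the distance equals $1$ then $d$ must equal $1$ as well.

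There is no serious obstacle here; the lemma reduces to a single divisibility observation once the setup of the preceding paragraph is in place. The only point that deserves a moment's attention is that the meridian $\mu_K$ of a primitive knot in a rational homology sphere is \emph{not} itself the generator $\mu$ of $H_1(M)$: rather, $\mu_K$ maps to $|H_1(Y)|$ times that generator, and this is precisely what makes $|p_0|$ equal to $|H_1(Y)|$ in the notation above.
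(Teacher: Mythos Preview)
Your proof is correct and is essentially the same as the paper's: both express the two filling slopes in the basis $\{\mu,\lambda\}$, read off that the $\mu$-coefficients are (up to sign) $|H_1(Y)|$ and $|H_1(Y')|$, and observe that $\gcd$ of these coefficients divides the $2\times 2$ determinant computing the distance. Your closing remark about $\mu_K$ versus $\mu$ is a nice clarification but not needed for the argument.
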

\begin{proof}
Let $\gamma$ and $\eta$ be slopes on $M$ for which Dehn filling yields $Y$ and $Y'$ respectively, and let $p=|H_1(Y)|$ and $q=|H_1(Y')|$.  Then we can write $\gamma = p\mu + a\lambda$ and $\eta = q\mu + b\lambda$ for some integers $a$ and $b$, since $\Delta(\gamma,\lambda) = p$ and $\Delta(\eta,\lambda) = q$.  Then $\Delta(\gamma,\eta) = |pb-qa|$, which is clearly a multiple of $\gcd(p,q)$ as claimed.
\end{proof}

\begin{remark}\label{rmk:core-distance}
In particular, if $K$ represents a core of a genus one Heegaard splitting of $L(p,q)$, then $K$ is primitive.  It follows from Lemma~\ref{lem:distance-one} that no fillings which are distance one from the trivial filling can yield $L(p,q')$ for any $q'$.
\end{remark}

We will occasionally make use of the Casson-Walker invariant \cite{BoyerLines,Walker} in order to study manifolds arising from several surgeries on the same knot.  This invariant $\lambda(Y) \in \Q$ agrees with the usual $\ZZ$-valued Casson invariant if $Y$ is a homology sphere, and it satisfies a surgery formula for a knot $K$ in a homology sphere $Y$:
\begin{equation}\label{eq:cw-surgery-alternate}
\lambda(Y_{a/b}(K)) - \lambda(S^3_{a/b}(U)) = \lambda(Y) + \frac{b}{a}A(K).
\end{equation}
Here $A(K) = \frac{\Delta''_K(1)}{2}$, where we normalize the Alexander polynomial so that $\Delta_K(t^{-1})=\Delta_K(t)$ and $\Delta_K(1) = 1$.

The Casson-Walker invariant $\lambda(Y)$ is related to the Heegaard Floer $d$-invariants \cite{OzsvathGraded} by the following formula, which was first proved by Ozsv\'{a}th--Szab\'{o} \cite[Theorem~1.3]{OzsvathGraded} for homology spheres and then generalized to all rational homology spheres by Rustamov \cite[Theorem~3.3]{Rustamov}.

\begin{theorem}[{\cite{OzsvathGraded,Rustamov}}] \label{thm:cw-sum-d}
If $Y$ is a rational homology sphere, then 
\[ |H_1(Y;\ZZ)| \cdot \lambda(Y) = \sum_{\spint \in \spc(Y)} \left(\chi(\hfred(Y,\spint)) - \frac{1}{2}d(Y,\spint)\right). \]
\end{theorem}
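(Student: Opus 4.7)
The plan is to establish the identity by a surgery induction: verify both sides on $S^3$, and then show that they transform in the same way under Dehn surgery on a knot, so that the equality propagates along any surgery description of $Y$. Every rational homology sphere arises from $S^3$ by a finite sequence of such surgeries, each preserving the rational homology sphere condition, so matching surgery formulas plus a matching base case will suffice.

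For the base case $Y = S^3$, the left side vanishes because $\lambda(S^3) = 0$, and the right side vanishes because the unique Spin$^c$ structure has $\hfred = 0$ and $d = 0$. It therefore remains to show that the functional
\[
F(Y) := \sum_{\spint \in \spc(Y)} \left(\chi(\hfred(Y,\spint)) - \tfrac{1}{2}d(Y,\spint)\right)
\]
obeys the same surgery transformation as $|H_1(Y;\ZZ)| \cdot \lambda(Y)$ under each surgery step, so that the two agree on the entire class of rational homology spheres.

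To verify the surgery formula for $F$, the key inputs are the Heegaard Floer surgery exact triangle and the mapping cone formula for integer and rational surgeries on a knot $K \subset Y$. The exact triangle relates $\hfp(Y, \spint)$, $\hfp(Y_0(K), \spint')$, and $\hfp(Y_n(K), \spint'')$ under appropriate Spin$^c$ identifications, producing a controlled linear relation among the $\chi(\hfred)$ contributions once we sum over all Spin$^c$ structures. The mapping cone formula then expresses the change in $d$-invariants in terms of the knot Floer chain complex of $K$; after summing over $\spc(Y_{a/b}(K))$, the aggregated change should match the $\tfrac{b}{a}A(K)$ correction in \eqref{eq:cw-surgery-alternate}, using that $A(K) = \tfrac{1}{2}\Delta''_K(1)$ is recovered from the Euler characteristic of the knot Floer complex.

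The main obstacle is the Spin$^c$ bookkeeping: the surgery triangle and mapping cone formula naturally act on individual Spin$^c$ structures, whereas $F$ is a sum over $\spc(Y)$, whose cardinality $|H_1(Y;\ZZ)|$ changes under surgery. One must carefully aggregate the Spin$^c$-by-Spin$^c$ relations, track how the order of $H_1$ varies, and verify that the corrections from the rational surgery formula add up to precisely what the Casson--Walker surgery formula predicts. In the homology sphere case, where $|\spc|=1$, this reduces to the original Ozsv\'ath--Szab\'o argument; the extension to rational homology spheres, where this Spin$^c$ aggregation is nontrivial, is the content of Rustamov's contribution.
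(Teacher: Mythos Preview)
The paper does not prove this theorem; it is stated with citation to \cite{OzsvathGraded,Rustamov} and used as a black box throughout. So there is no proof in the paper to compare against.

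That said, your sketch is aimed in the right direction: the cited references do establish the identity by showing that the Heegaard Floer quantity satisfies the same surgery formula as the Casson (or Casson--Walker) invariant, together with the obvious base case. But what you have written is a plan, not a proof. The entire content of the theorem lies in the step you describe as ``the aggregated change should match the $\tfrac{b}{a}A(K)$ correction'' and ``one must carefully aggregate the Spin$^c$-by-Spin$^c$ relations''; you acknowledge this as the main obstacle and then do not carry it out. In particular, you do not explain how the Euler characteristic of $\hfred$ and the $d$-invariants actually change under a single surgery, nor how the Alexander polynomial enters via the absolute grading shifts in the exact triangle. Without this, the proposal is not a proof but a restatement of what needs to be proved.

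A smaller historical point: the original arguments in \cite{OzsvathGraded} (for homology spheres) predate the mapping cone formula and proceed directly from the surgery exact triangle and the absolute grading; invoking the mapping cone is anachronistic, though not wrong in principle.
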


Another useful tool for us will be the linking form.  For notation, if $H_1(Y)$ is cyclic of order $p$, then we will say that $Y$ has linking form $\frac{a}{p}$ if there exists a generator with self-linking $\frac{a}{p}$.  We note for cyclic groups, the two forms $\frac{a}{p}$ and $\frac{b}{p}$ are equivalent if and only if $a \equiv u^2 b \pmod{p}$ for a unit $u \in \Z/p\Z$.  In this notation, if $K$ is a knot in a homology sphere $Y$, then the linking form of $Y_{p/q}(K)$ is $\frac{q}{p}$.   

\subsection{Heegaard Floer homology and surgeries}

If $\spint$ is a torsion $\spc$ structure on $Y$, the Heegaard Floer invariants $\hfcirc(Y,\spint)$ ($\circ = +,-,\infty$) admit an absolute $\Q$-grading \cite{OzsvathSmooth}.  This has already appeared implicitly in the statement of Theorem~\ref{thm:cw-sum-d}: given a rational homology sphere $Y$, the $d$-invariants $d(Y,\spint) \in \Q$ are defined in \cite{OzsvathGraded} as the lowest grading of a nonzero element $x\in \hfp(Y,\spint)$ such that $x \in \img(U^k)$ for all $k \geq 0$.  In this subsection we will review some properties of surgeries and their relationship to gradings in Heegaard Floer homology.

\begin{theorem}[Ozsv\'ath-Szab\'o \cite{OzsvathSmooth}]\label{thm:os-gradings}
The absolute grading on $\hfcirc(Y,\spint)$ has the following properties:
\begin{itemize}
\item If $(W,\spinc)$ is a $\spc$ cobordism from $(Y_1,\spint_1)$ to $(Y_2,\spint_2)$, with $\spint_1,\spint_2$ torsion, then the induced map $F^\circ_{W,\spinc}: \hfcirc(Y_1,\spint_1) \to \hfcirc(Y_2,\spint_2)$ changes grading by 
\[ \gr(F^\circ_{W,\spinc}) = \frac{c_1(\spinc)^2 - 2\chi(W) - 3\sigma(W)}{4}. \]
\item The natural map $\pi: \hfinfty(Y,\spint) \to \hfp(Y,\spint)$ preserves the absolute grading.
\item The $U$-action on $HF^\circ$ has degree $-2$.
\end{itemize}
\end{theorem}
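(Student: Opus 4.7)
The plan is to establish the three items essentially from the chain-level construction of Heegaard Floer homology. Item (3) is combinatorial: $U$ acts on $CF^\infty$ by $U \cdot [\mathbf{x}, i] = [\mathbf{x}, i - 1]$, and the Maslov-type relative grading formula contributes $2$ for each unit increase in $i$, so multiplication by $U$ drops the grading by $2$. Item (2) is equally immediate: $CF^+$ is defined as the quotient $CF^\infty / CF^-$, and the projection is the identity on generators, so the induced map $\pi$ preserves the absolute grading on homology.

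The substantive item is (1). Following \cite{OzsvathSmooth}, I would \emph{define} the absolute grading by fixing the reference $d(S^3) = 0$ (equivalently, declaring the top surviving generator of $\hfp(S^3)$ to sit in grading $0$) and then, given any rational homology sphere $(Y, \spint)$ with $\spint$ torsion, transporting the grading along any $\spc$ cobordism $(W, \spinc)$ from $S^3$ to $Y$ by imposing that the induced map $F^\circ_{W,\spinc}$ shift grading by
\[ \frac{c_1(\spinc)^2 - 2\chi(W) - 3\sigma(W)}{4}. \]
Once one verifies this definition is independent of the cobordism chosen, the grading-shift formula for an \emph{arbitrary} $\spc$ cobordism follows by stacking with a reverse cobordism and invoking the additivity of $\chi$, $\sigma$, and $c_1^2$ under composition.

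The main obstacle is establishing that well-definedness. Two $\spc$ cobordisms from $(S^3, \spinc_0)$ to $(Y, \spint)$ differ by a closed $\spc$ $4$-manifold, so the issue reduces to showing that for a closed $(X, \spinc)$ the grading shift of the induced endomorphism of $\hfcirc$ equals $\frac{c_1(\spinc)^2 - 2\chi(X) - 3\sigma(X)}{4}$. The cobordism maps are defined by counts of holomorphic triangles in a symmetric product of a Heegaard surface, and the degree of such a count is governed by the expected dimension of the relevant moduli space; a direct index computation identifies this expected dimension with the claimed expression. Concretely one decomposes the cobordism into $1$-, $2$-, and $3$-handles and computes the shift for each type separately; the $2$-handle case is the most involved and is handled using the surgery exact triangle together with the grading formula for the triangle maps. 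Additivity of the topological terms under composition then closes the argument.
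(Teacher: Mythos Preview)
The paper does not prove this statement at all: Theorem~\ref{thm:os-gradings} is stated as a quotation of a result of Ozsv\'ath and Szab\'o, with a citation to \cite{OzsvathSmooth}, and the text immediately moves on to consequences. So there is no proof in the paper for your proposal to be compared against.

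That said, your sketch is a fair outline of the strategy in the original reference. Items (2) and (3) are indeed immediate from the chain-level definitions, and for item (1) you correctly identify that the absolute grading is \emph{defined} by normalizing at $S^3$ and declaring the cobordism maps to shift by the stated quantity, so that the real content is well-definedness. One point to tighten: the reduction ``two cobordisms differ by a closed $\spc$ $4$-manifold'' is not quite the mechanism---one does not literally compute an endomorphism of $\hfcirc$ induced by a closed manifold. Rather, one shows directly that the handle-by-handle grading shifts are consistent and that the composition law for cobordism maps matches the additivity of $c_1^2$, $\chi$, and $\sigma$; the closed-manifold consistency then follows. Your last paragraph essentially says this, so the sketch is sound, but the framing in the preceding paragraph is slightly misleading.
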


The cobordism maps on $\hfinfty$ are particularly simple.  If $b_2^+(W) > 0$, then the map $F^\infty_{W,\spinc}$ is zero for all $\spinc$ \cite[Lemma~8.2]{OzsvathSmooth}.  On the other hand, if $W$ is a 2-handle cobordism with $b_2^-(W)=1$ between rational homology spheres $Y_1$ and $Y_2$, and $\spinc \in \spc(W)$ restricts to $Y_i$ as $\spint_i$ for $i=1,2$, then
\[ F^\infty_{W,\spinc}: \hfinfty(Y_1,\spint_1) \to \hfinfty(Y_2,\spint_2) \]
is an isomorphism \cite[Proposition~9.4]{OzsvathGraded} between $\F[U]$-modules of the form $\F[U,U^{-1}]$, so it is determined completely by its grading, which in this case simplifies to $\frac{1}{4}(c_1(\spinc)^2 + 1)$.  Moreover, in the case $b_2^-(W)=1$, the elements of $\hfp(Y_i,\spint_i)$ which determine $d(Y_1,\spint_1)$ and $d(Y_2,\spint_2)$ are both in $\pi(\hfinfty(Y_i,\spint_i)) \subset \hfp(Y_i,\spint_i)$ by definition, so we must have
\begin{equation} \label{eq:surgery-d-relation}
d(Y_2,\spint_2) - d(Y_1,\spint_1) \equiv \gr(F^\infty_{W,\spinc}) = \frac{c_1(\spinc)^2+1}{4} \pmod{2}.
\end{equation}

\begin{lemma} \label{lem:surgery-fplus}
Let $W$ be a 2-handle cobordism between rational homology spheres $Y_1$ and $Y_2$.  If $b_2^+(W) = 1$ and $Y_1$ is an L-space, then the map $F^+_{W,\spinc}: \hfp(Y_1,\spinc|_{Y_1}) \to \hfp(Y_2,\spinc|_{Y_2})$ is zero.  If instead $b_2^-(W) = 1$ and $Y_2$ is an L-space, then $F^+_{W,\spinc}$ is surjective.
\end{lemma}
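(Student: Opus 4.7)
The plan is to prove both statements by diagram-chasing through the naturality square
\[
\begin{CD}
\hfinfty(Y_1,\spinc|_{Y_1}) @>{F^\infty_{W,\spinc}}>> \hfinfty(Y_2,\spinc|_{Y_2}) \\
@VV{\pi_1}V @VV{\pi_2}V \\
\hfp(Y_1,\spinc|_{Y_1}) @>{F^+_{W,\spinc}}>> \hfp(Y_2,\spinc|_{Y_2}),
\end{CD}
\]
using two facts that make L-spaces especially tractable: if $Y$ is an L-space and $\spint$ is torsion (which for a rational homology sphere is automatic), then $\hfred(Y,\spint)=0$, and consequently the natural map $\pi: \hfinfty(Y,\spint) \to \hfp(Y,\spint)$ is surjective with image $\tower$. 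I will also use the two structural facts about $\hfinfty$ cobordism maps already recorded in the excerpt: $F^\infty_{W,\spinc}$ vanishes whenever $b_2^+(W) > 0$, while for a 2-handle cobordism with $b_2^-(W) = 1$ (so $b_2^+(W)=0$) the map $F^\infty_{W,\spinc}$ is an isomorphism between copies of $\F[U,U^{-1}]$.

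For the first statement, suppose $b_2^+(W)=1$ and $Y_1$ is an L-space. Given any class $x \in \hfp(Y_1,\spinc|_{Y_1})$, surjectivity of $\pi_1$ furnishes a lift $\tilde x \in \hfinfty(Y_1,\spinc|_{Y_1})$ with $\pi_1(\tilde x) = x$. Then commutativity of the square and the vanishing of $F^\infty_{W,\spinc}$ give
\[ F^+_{W,\spinc}(x) = F^+_{W,\spinc}(\pi_1(\tilde x)) = \pi_2(F^\infty_{W,\spinc}(\tilde x)) = 0, \]
so $F^+_{W,\spinc} \equiv 0$. For the second statement, suppose $b_2^-(W)=1$ and $Y_2$ is an L-space. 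Given $y \in \hfp(Y_2,\spinc|_{Y_2})$, use surjectivity of $\pi_2$ to pick a lift $\tilde y$, and then use that $F^\infty_{W,\spinc}$ is an isomorphism to pull $\tilde y$ back to some $\tilde x \in \hfinfty(Y_1,\spinc|_{Y_1})$. Commutativity again yields
\[ F^+_{W,\spinc}(\pi_1(\tilde x)) = \pi_2(F^\infty_{W,\spinc}(\tilde x)) = \pi_2(\tilde y) = y, \]
so $F^+_{W,\spinc}$ is surjective.

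There is no real obstacle here beyond verifying that the hypotheses cleanly split into the two $\hfinfty$ regimes already recalled in the excerpt; since $W$ is a 2-handle cobordism we have $b_2(W) = 1$, so one of $b_2^+(W)$ or $b_2^-(W)$ is $1$ and the other is $0$, placing us in exactly the setting of one of the two structural results. The only mildly delicate point is remembering that the naturality square for cobordism maps commutes with the maps in the $\hfinfty \to \hfp$ long exact sequence, which is standard.
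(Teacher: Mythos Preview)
Your proof is correct and is essentially identical to the paper's own argument: both use the commutative square relating $F^\infty_{W,\spinc}$ and $F^+_{W,\spinc}$ via the projections $\pi_i$, invoking vanishing of $F^\infty$ when $b_2^+(W)=1$ together with surjectivity of $\pi_1$ for the first claim, and the isomorphism on $\hfinfty$ when $b_2^-(W)=1$ together with surjectivity of $\pi_2$ for the second. The only cosmetic difference is that you phrase it as an element-level diagram chase while the paper phrases it as a statement about compositions of maps.
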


\begin{proof}
Write $\spint_i = \spinc|_{Y_i}$, $i=1,2$.  The cobordism maps $F^\circ_{W,\spinc}$ fit into a commutative diagram 
\[ \xymatrix{
\hfinfty(Y_1,\spint_1) \ar[r]^{F^\infty_{W,\spinc}} \ar[d]_{\pi_1} & \hfinfty(Y_2,\spint_2) \ar[d]^{\pi_2} \\
\hfp(Y_1,\spint_1) \ar[r]^{F^+_{W,\spinc}} & \hfp(Y_2,\spint_2).
} \]
If $b_2^+(W)=1$ and $Y_1$ is an L-space, then $F^\infty_{W,\spinc} = 0$ and so $F^+_{W,\spinc}\circ\pi_1 = 0$, but $\pi_1$ is surjective so we must have $F^+_{W,\spinc}=0$.  On the other hand, if $b_2^-(W)=1$ and $Y_2$ is an L-space, then $F^\infty_{W,\spinc}$ is an isomorphism and $\pi_2$ is surjective, so their composition $\pi_2 \circ F^\infty_{W,\spinc} = F^+_{W,\spinc} \circ \pi_1$ is surjective, hence $F^+_{W,\spinc}$ is surjective as well.
\end{proof}

The cobordism maps induced by various surgeries on a knot (namely, any three which form a triad) fit into exact triangles; the surgery exact triangle first appeared in \cite{Ozsvath2004a}, but we cite the version from \cite{OzsvathRational}.

\begin{theorem}[Ozsv\'ath-Szab\'o {\cite[Theorem~6.2]{OzsvathRational}}] \label{thm:surgery-triangle}
Let $K \subset Y$ be a rationally null-homologous knot with framing $\lambda$ and meridian $\mu$.  Then there is a long exact sequence
\[ \dots \to \hfp(Y_\lambda(K)) \to \hfp(Y_{\lambda + \mu}(K)) \to \hfp(Y) \to \dots, \]
in which the maps are the $\hfp$ cobordism maps corresponding to attaching 2-handles.
\end{theorem}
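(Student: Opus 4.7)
The plan is to adapt the standard Ozsváth--Szabó construction of the surgery exact triangle to the rationally null-homologous setting, using a Heegaard multi-diagram in which the three fillings appear symmetrically.

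First I would build a Heegaard diagram $(\Sigma,\boldsymbol{\alpha},\boldsymbol{\beta})$ for $Y$ in which the meridian $\mu$ of $K$ appears as one of the attaching curves, say $\beta_g$, and all other attaching curves can be arranged to lie in $\Sigma \setminus T$ for a fixed torus region $T \subset \Sigma$ containing $\beta_g$. Concretely, drill out a neighborhood of $K$ to get the exterior $M$, pick a Heegaard splitting of $M$ compatible with its torus boundary, and let the three slopes $\mu$, $\lambda$, $\lambda+\mu$ appear as three curves $\beta_g$, $\gamma_g$, $\delta_g$ on $T$ at pairwise geometric intersection number one. Replacing $\beta_g$ with $\gamma_g$ or $\delta_g$ yields Heegaard diagrams for $Y_\lambda(K)$ and $Y_{\lambda+\mu}(K)$ respectively. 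Counts of pseudo-holomorphic triangles in the resulting Heegaard triples $(\Sigma,\boldsymbol{\alpha},\boldsymbol{\beta},\boldsymbol{\beta}')$, $(\Sigma,\boldsymbol{\alpha},\boldsymbol{\beta}',\boldsymbol{\beta}'')$, and $(\Sigma,\boldsymbol{\alpha},\boldsymbol{\beta},\boldsymbol{\beta}'')$, summed over $\mathrm{Spin}^c$ structures on the associated 4-manifolds, define the candidate chain maps between the $\hfp$ complexes.

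Next I would verify exactness via the general Floer-theoretic exact triangle lemma: it suffices to show that consecutive compositions are null-homotopic, with a null-homotopy by counts of pseudo-holomorphic quadrilaterals, and that one of the three triples is ``small'' in the sense that the corresponding Heegaard diagram $(\Sigma,\boldsymbol{\beta},\boldsymbol{\beta}',\boldsymbol{\beta}'')$ computes the Floer homology of a standard connected sum $\#^k(S^1\times S^2)$, so that the triangle map there is an isomorphism on the top generator. Because the $\boldsymbol{\beta}$-tuples differ only inside the torus $T$, the relevant count reduces to a model calculation on $T$ with three pairwise-once-intersecting curves, which is handled exactly as in the original proof. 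Finally, to identify each triangle map with the claimed 2-handle cobordism map, I would appeal to the general principle that the triangle map associated to a triple where two $\boldsymbol{\beta}$-tuples differ by a single handleslide or surgery curve is the cobordism map for the corresponding 2-handle attachment, applied to each of the two relevant triples.

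The main obstacle is the $\mathrm{Spin}^c$ bookkeeping in the rationally null-homologous setting. When $K$ is only rationally null-homologous, there are infinitely many $\mathrm{Spin}^c$ structures on each 2-handle cobordism that restrict to a fixed $\mathrm{Spin}^c$ structure on the boundary, so one must show that the infinite sum defining the triangle map makes sense on $\hfp$ (rather than only on $\hfhat$). This requires an energy/area argument bounding the number of $\mathrm{Spin}^c$ structures that contribute non-trivially modulo any fixed $U$-power, which in turn rests on the adjunction-type inequality controlling $c_1(\spinc)^2$ for classes supported on the 2-handle. Once this convergence is in place, the identification with cobordism maps and the exactness argument proceed as in the integrally null-homologous case treated in \cite{Ozsvath2004a}.
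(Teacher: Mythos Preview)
The paper does not prove this theorem at all: it is quoted directly from Ozsv\'ath--Szab\'o \cite[Theorem~6.2]{OzsvathRational}, and the only commentary the paper adds is the sentence following the statement, noting that the triangle maps coincide with the 2-handle cobordism maps because the latter are \emph{defined} in \cite{OzsvathSmooth} via the same holomorphic triangle counts.  So there is no ``paper's own proof'' to compare against.

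Your sketch is a reasonable outline of the argument that actually appears in \cite{Ozsvath2004a,OzsvathRational}: build a multi-pointed Heegaard triple in which the three fillings differ by a single curve in a torus region, define the maps by triangle counts, and run the exact-triangle detection lemma using a quadrilateral null-homotopy and a local model computation.  Your identification of the main technical wrinkle in the rationally null-homologous case---the need to sum over infinitely many $\spc$ structures on the cobordism and to justify convergence in $\hfp$ via an area/energy bound---is exactly the point addressed in \cite{OzsvathRational}.  In short, you are reproducing the cited proof rather than anything in the present paper, and for the purposes of this paper a one-line citation suffices.
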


The maps in the exact triangle are described via holomorphic triangle counts in \cite{Ozsvath2004a,OzsvathRational}, but the claim that they are cobordism maps follows from the fact that 2-handle cobordism maps are defined in \cite{OzsvathSmooth} using precisely these counts.  Each cobordism comes from attaching a 2-handle to a meridian of the core of the previous surgery.

The signature of each 2-handle cobordism in the exact triangle can be computed according to \cite[Lemma~42.3.1]{KMbook}: let $Z \subset Y$ be the exterior of $K$, and $\lambda \subset \partial Z$ a primitive oriented curve which generates $\ker(H_1(\partial Z) \to H_1(Z))$.  If $W$ is the 2-handle cobordism from the Dehn filling of $Z$ along $\gamma$ to the Dehn filling along $\gamma'$, where $\gamma$ and $\gamma'$ are oriented so that $\gamma \cdot \gamma' = -1$, then $W$ has signature $+1$ (respectively $-1$) if $\gamma\cdot \lambda$ and $\gamma'\cdot\lambda$ have the same sign (respectively opposite signs).  (If either of these is zero then $\sigma(W)=0$.)  If all three manifolds in the triangle are rational homology spheres, it follows that two of the cobordisms are negative definite and one is positive definite.

For example, if we let $Y' = Y_2(K)$ and let $K' \subset Y'$ be the image of an $n$-framed meridian of $K$, then the surgery triangle corresponding to $(Y',K')$ has the form
\begin{equation}\label{eq:2-exact-triangle}
\dots \to \hfp(Y_{2-\frac{1}{n}}(K)) \to \hfp(Y_{2-\frac{1}{n+1}}(K)) \to \hfp(Y_{2}(K)) \to \dots.
\end{equation}
We can verify that the cobordism from $Y_2(K)$ to $Y_{2-\frac{1}{n}}(K)$ is positive definite for $n\geq 1$.  Indeed, if $\mu$ and $\lambda$ are the meridian and longitude of $K$ in its exterior $Z$, then the boundary orientation of $\partial Z$ gives $\mu\cdot\lambda=-1$.  The manifolds $Y_2(K)$ and $Y_{2-\frac{1}{n}}(K)$ are Dehn fillings along $\gamma = 2\mu+\lambda$ and $\gamma'=(2n-1)\mu+n\lambda$ respectively, satisfying $\gamma\cdot\gamma'=-1$.  We have $\gamma\cdot\lambda = -2$ and $\gamma'\cdot\lambda=-(2n-1)$, and since $n\geq 1$ these have the same sign.

Finally, we use Dehn surgery to verify the following property of the $d$-invariants.

\begin{lemma} \label{lem:d-2p-Z}
If $H_1(Y;\ZZ) \cong \ZZ/p\ZZ$, then $d(Y,\spint) \in \frac{1}{2p}\ZZ$ for all $\spint \in \spc(Y)$.  Further, if the linking form of $Y$ is equivalent to that of $L(p,q)$, then the set of $d$-invariants of $Y$ agrees mod 2 with that of $L(p,q)$.  
\end{lemma}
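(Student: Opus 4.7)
My plan is to realize $Y$ as the boundary of a $4$-manifold $X$ built from $B^4$ by attaching $2$-handles along a framed link in $S^3$, so that $H_1(X;\Z) = 0$ and the intersection form $Q$ on $H_2(X;\Z) \cong \Z^n$ has $|\det Q| = p$.  Possibly after Kirby moves and blow-ups, or after replacing $Y$ by $-Y$ using $d(-Y,\spint) = -d(Y,\spint)$, we may assume $X$ is negative semi-definite, so that $\sigma(X) = -b_2^-(X)$.

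For the first claim, Ozsv\'ath-Szab\'o's $d$-invariant bound, made sharp by suitable stabilization, gives for each $\spint \in \spc(Y)$ an extension $\spinc \in \spc(X)$ with
\[
4\, d(Y,\spint) \,=\, c_1(\spinc)^2 + b_2^-(X).
\]
Here $c_1(\spinc)^2 = c^\top Q^{-1} c$ for any integer characteristic vector $c \in \Z^n$, and since the signed adjugate $p Q^{-1}$ has integer entries, $c_1(\spinc)^2 \in \tfrac{1}{p}\Z$.  A mod-$2$ calculation using the characteristic condition $c_i \equiv Q_{ii} \pmod 2$ shows that $p\,(c_1(\spinc)^2 + b_2^-(X))$ is always even: in the diagonal case this reduces to the identity $\sum_i c_i^2 \prod_{j\neq i} Q_{jj} \equiv b_2^-(X) \cdot \det Q \equiv p\,b_2^-(X) \pmod 2$, and this extends to the general case because the parity of $c^\top \mathrm{adj}(Q)\,c$ is determined by $Q \bmod 2$.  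Hence $c_1(\spinc)^2 + b_2^-(X) \in \tfrac{2}{p}\Z$, and consequently $d(Y,\spint) \in \tfrac{1}{2p}\Z$.

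For the second claim, I would invoke the generalization of Van der Blij's formula to non-unimodular intersection forms: for any $(X,\spinc)$ bounding $(Y,\spint)$ with $b_1(X)=0$, the residue $c_1(\spinc)^2 - \sigma(X) \pmod 8$ depends only on the linking form of $Y$ and on $\spint$, through the mod-$8$ Gauss sum of the linking form.  Substituting $\sigma(X) = -b_2^-(X)$ into the formula for $4d(Y,\spint)$ gives
\[
4\, d(Y,\spint) \equiv c_1(\spinc)^2 - \sigma(X) \pmod 8,
\]
so $d(Y,\spint) \bmod 2$ is a linking-form invariant of $(Y,\spint)$.  Therefore if $Y$ and $L(p,q)$ share the same linking form, their multisets of $d$-invariants coincide modulo $2$.

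The main obstacles in executing this plan are ensuring a sharp negative semi-definite bounding $4$-manifold exists for $Y$ (handled by standard Kirby-calculus manipulations, or by passing to $-Y$ when convenient) and carrying out the mod-$8$ Gauss-sum computation for the linking form.  Both are classical but technically delicate; once established, the denominator bound and the linking-form invariance of $d$ mod $2$ follow from the characteristic-class arithmetic above.
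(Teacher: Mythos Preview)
Your proposal has a genuine gap at its very first step: the claimed equality $4\,d(Y,\spint) = c_1(\spinc)^2 + b_2^-(X)$ is only an \emph{inequality} in general.  Ozsv\'ath--Szab\'o's result for a negative-definite $X$ with $\partial X = Y$ gives
\[
d(Y,\spint) \;\ge\; \frac{c_1(\spinc)^2 + b_2^-(X)}{4},
\]
and there is no mechanism --- stabilization, blow-ups, or Kirby moves --- that forces equality for an arbitrary rational homology sphere.  Indeed, sharpness is a highly restrictive condition (it is what makes lattice-embedding obstructions work), and for most $Y$ no sharp negative-definite filling exists.  Since both of your conclusions rest on this equality, the argument does not go through.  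A related issue is that not every $Y$ (or $-Y$) bounds a negative semi-definite $4$-manifold with $H_1=0$ at all; this is obstructed, for example, by Fr{\o}yshov-type inequalities.

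The paper's approach sidesteps this entirely.  It first checks the claim for $L(p,q)$ directly via an explicit formula for its $d$-invariants in terms of Dedekind sums.  Then, because $Y$ and $L(p,q)$ have the same linking form, a result of Cochran--Gerges--Orr guarantees that $Y$ is obtained from $L(p,q)$ by a finite sequence of $\pm 1$-surgeries on nullhomologous knots.  Each such surgery corresponds to a definite $2$-handle cobordism for which the grading shift $\tfrac{1}{4}(c_1(\spinc)^2+1)$ is an even integer by an elementary congruence, so by \eqref{eq:surgery-d-relation} the $d$-invariants change by even integers at every step.  This yields both the mod-$2$ statement and, by pulling back from $L(p,q)$, the denominator bound.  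The key point is that \eqref{eq:surgery-d-relation} is a genuine congruence (not an inequality), because it comes from comparing gradings through the isomorphism on $HF^\infty$; your approach lacks any analogue of this exactness.
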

\begin{proof}
We first claim that $d(L(p,q), \spint) \in \frac{1}{2p}\ZZ$ for all $\spint \in \spc(Y)$.  By \cite[Theorem 4]{Tange}, we can write 
\[
d(L(p,q),i) = 3s(q,p) + \frac{n}{2p}
\]
for some $n \in \mathbb{Z}$, where $s(q,p)$ is a Dedekind sum.  Since $s(q,p) \in \frac{1}{6p}\ZZ$ when $\gcd(p,q) = 1$ \cite{RademacherGrosswald}, we have the desired claim.  

Now, fix $Y$ with $H_1(Y;\ZZ) \cong \ZZ/p\ZZ$ and suppose the linking form of $Y$ is equivalent to $\frac{q}{p}$.  Then it follows from \cite[Corollary 3.9 and Proposition 3.17]{CochranGergesOrr} that we can obtain $Y$ from $L(p,q)$ by a sequence of $\pm 1$-surgeries on nullhomologous knots.  Therefore, it suffices to show that $\pm 1$-surgery on a nullhomologous knot in a rational homology sphere preserves the set of $d$-invariants mod 2.  Here we will implicitly use the canonical identification between $\spc(Y)$ and $\spc(Y_{\pm 1}(K))$.

If $Y_2$ is obtained by $-1$-surgery on $Y_1$, then we consider the corresponding 2-handle cobordism $W$ from $Y_1$ to $Y_2$ which is negative definite.  By \eqref{eq:surgery-d-relation}, it suffices to show that for each $\spinc \in \spc(W)$, the grading shift $\gr(F^\infty_{W,\spinc})$ is an even integer.  We have
\[
\gr(F^\infty_{W,\spinc}) = \frac{c_1(\spinc)^2 + 1}{4} = \frac{-(2k-1)^2 + 1}{4}
\]          
for some integer $k$, and since $(2k-1)^2 \equiv 1 \pmod{8}$, it follows that $d(Y_1, \spint_1) \equiv d(Y_2, \spint_2) \pmod{2}$, where $\spint_i = \spinc|_{Y_i}$.  On the other hand, if $Y_2$ is obtained from $Y_1$ by $+1$-surgery on a knot, then $Y_1$ is obtained from $Y_2$ by $-1$-surgery on the dual knot, and we can apply the same argument.  Since we have shown that the $d$-invariants of $L(p,q)$ are in $\frac{1}{2p}\ZZ$, we must have the same for $Y$.  
\end{proof}

\subsection{Lens space surgeries and L-spaces}

Greene states Theorem~\ref{thm:greene-classification} in \cite{GreeneNonQA} as a classification of quasi-alternating links of determinant up to 3, namely that they are the unknot, the Hopf link, or a trefoil; but his proof, which passes through branched double covers, actually establishes Theorem~\ref{thm:greene-classification} as stated here.  Theorem~\ref{thm:hr} then yields the classification for the branch sets.  We recall the argument here in order to suggest how our own classification will proceed and to introduce some necessary results.

The only formal L-space with determinant 1 is $S^3$ by definition.  To classify formal L-spaces $Y$ with $\det(Y)=2$, Greene observes that there must be a triad $(Y,Y_0,Y_1)$ with $\det(Y_0) = \det(Y_1) = 1$, and hence $Y_0=Y_1=S^3$; thus both $Y$ and $Y_1=S^3$ result from nontrivial surgeries on a knot $J \subset Y_0=S^3$, and only the unknot in $S^3$ has a nontrivial $S^3$ surgery.  Therefore, $J$ is the unknot and $Y$ is therefore $\rpthree$.  In order to classify $Y$ with $\det(Y)=3$, we must likewise have a triad $(Y, S^3, \rpthree)$, so both $Y$ and $\rpthree$ arise as surgeries on the same knot $J \subset S^3$; but the only knot in $S^3$ with an $\rpthree$ surgery is the unknot, so $Y$ must be a lens space of order three.

The key input needed for the above argument is an understanding of which knots in $S^3$ have lens space surgeries.  While this is a difficult question in general, it is understood for lens spaces of small order.

\begin{theorem}[{\cite[Corollary~8.4]{KMOS}}]\label{thm:kmos}
Suppose that $S^3_{p/q}(K)$ results in a lens space for $|p| \leq 8$.  Then $K$ is the unknot or a trefoil.  In particular, if $|p| \leq 4$ then $K$ is the unknot.
\end{theorem}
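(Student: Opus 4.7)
The plan is to reduce the problem to a correction-term computation via the fact that any lens space is an L-space. First, since $S^3_{p/q}(K)$ is an L-space (after replacing $K$ by its mirror if $p<0$ we may assume $p,q>0$), the work of Ozsv\'ath--Szab\'o implies that $K$ is an L-space knot. In particular $K$ is fibered, with monic Alexander polynomial whose nonzero coefficients are all $\pm 1$ and alternate in sign, and the Ozsv\'ath--Szab\'o genus bound $p/q \geq 2g(K)-1$ forces $g(K) \leq 4$ whenever $|p|\leq 8$, and $g(K)\leq 2$ whenever $|p|\leq 4$.

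Next I would invoke the Ni--Wu rational surgery formula, which for an L-space knot expresses each $d(S^3_{p/q}(K),\spint)$ as the corresponding $d(S^3_{p/q}(U),\spint)$ minus twice a torsion coefficient $V_{s(\spint)}(K)$, where the indexing $s(\spint)$ depends only on $p$ and $q$. For L-space knots the $V_s(K)$ are determined by the Alexander polynomial, and the $d$-invariants of the candidate target lens spaces $L(p,q')$ can be listed explicitly using the Dedekind-sum recursion implicit in the proof of Lemma~\ref{lem:d-2p-Z}. Demanding that the multiset of $d$-invariants of $S^3_{p/q}(K)$ coincide with those of \emph{some} $L(p,q')$ then gives a finite system of equations in the $V_s(K)$, and any solution must be realized by a genuine L-space knot Alexander polynomial satisfying the alternating $\pm 1$ condition.

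For $|p|\leq 4$ I would show that every such system forces all $V_s(K)=0$, so $K$ has trivial Alexander polynomial; among L-space knots this only happens for the unknot. For $5 \leq |p|\leq 8$ a similar case analysis allows only the trivial solution or the staircase corresponding to $\Delta_K(t)=t-1+t^{-1}$, and since a genus-one L-space knot is necessarily a trefoil (the other genus-one fibered knot, the figure eight, is ruled out by its Alexander polynomial), this identifies $K$ as a trefoil in the nontrivial case.

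The main obstacle will be the combinatorics of the $d$-invariant match when $q > 1$: the indexing $s(\spint)$ rearranges torsion coefficients in a $q$-dependent way, and one must verify the resulting congruences across every possible target $q'$ simultaneously rather than for a fixed $q'$. A secondary subtlety is confirming that $S^3_{p/q}(K)$ is actually homeomorphic to $L(p,q')$, rather than merely a rational homology sphere with matching order, linking form, and $d$-invariants; this is handled a posteriori since once $K$ is pinned down to be the unknot or a trefoil, the resulting surgeries are classically known to be lens spaces and can simply be enumerated.
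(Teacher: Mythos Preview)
The paper does not prove this statement; it is imported verbatim as \cite[Corollary~8.4]{KMOS} and used as a black box throughout Section~\ref{sec:classification}. There is therefore no in-paper argument to compare your proposal against.

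As an independent argument, your outline is a reasonable Heegaard Floer translation of what Kronheimer--Mrowka--Ozsv\'ath--Szab\'o do in monopole Floer homology, updated with later technology (the Ni--Wu rational $d$-invariant formula and the L-space knot formalism). The skeleton is sound: a lens space is an L-space, so $K$ is an L-space knot with the usual Alexander polynomial constraints, and the bound $2g(K)-1 \le p/q \le p$ cuts the genus down to at most $4$ (respectively $2$). For each of the finitely many admissible Alexander polynomials the torsion coefficients $V_i(K)$ and hence the $d$-invariants of $S^3_{p/q}(K)$ are determined, and one can test them against the finite list of lens spaces of order $p$. The honest gap is that you have promised this matching rather than carried it out; it is a finite but genuinely nontrivial check, and for $q>1$ the $\spc$ bookkeeping is as unpleasant as you anticipate.

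One minor point: your ``secondary subtlety'' is not a subtlety at all. The hypothesis of the theorem already asserts that $S^3_{p/q}(K)$ \emph{is} a lens space; you are not trying to certify that some manifold is a lens space, only to identify $K$. Once the $d$-invariant computation pins down $\Delta_K$ (hence $K$, via Ghiggini for $g=1$ and the unknot for $\Delta_K=1$), the argument is complete with nothing to check a posteriori.
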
 

Similarly, we will occasionally need to understand when knots in Heegaard Floer L-spaces have lens space surgeries.  We will use a recent result of Gainullin \cite{Gainullin-complement}, who proved the following Dehn surgery characterization of the unknot, generalizing the main result of \cite{KMOS} (see also \cite[Corollary 1.3]{OzsvathRational}).

\begin{theorem}[{\cite[Theorem~8.2]{Gainullin-complement}}] \label{thm:surgery-characterization}
Let $K$ be a nullhomologous knot in an L-space $Y$.  If $HF^+(Y_{p/q}(K))$ and $HF^+(Y_{p/q}(U))$ are isomorphic as absolutely-graded $\F[U]$-modules, then $K$ is the unknot.
\end{theorem}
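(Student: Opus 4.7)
The plan is to exploit the rational surgery mapping cone formula for a nullhomologous knot $K$ in an L-space $Y$ (the nullhomologous-in-L-space analogue of the Ozsv\'ath--Szab\'o rational surgery formula), which presents $\hfp(Y_{p/q}(K))$ as the homology of a mapping cone built from the complexes $A^+_s(Y,K)$ and $B^+_s$ derived from $CFK^\infty(Y,K)$. Because $Y$ is an L-space, each $B^+_s$ is a single tower $\tower$. When $K=U$ is a nullhomologous unknot in $Y$, one has $Y_{p/q}(U)\cong Y\#L(p,q)$, each $A^+_s(U)$ is a single tower, and the cone correctly reproduces $\hfp(Y\#L(p,q))$; this is the baseline to match.

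First I would unpack the hypothesis that $\hfp(Y_{p/q}(K))\cong\hfp(Y_{p/q}(U))$ as absolutely graded $\F[U]$-modules. Since $Y\#L(p,q)$ is an L-space, $Y_{p/q}(K)$ must also be an L-space, and running the mapping cone formula while comparing ranks spin$^c$-summand by spin$^c$-summand forces each $A^+_s(Y,K)$ to have homology a single tower, i.e.\ the reduced piece $\Ared_s$ vanishes for every $s$. Matching absolute $\Q$-gradings then pins down the $d$-invariant of each $A^+_s(Y,K)$ to agree with that of the corresponding $A^+_s(Y,U)$.

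Next I would translate these identifications, via Ni--Wu style formulas expressing $d$-invariants of $A^+_s$ in terms of the Alexander polynomial and the $V_s$ invariants, into the statement that $\Delta_K(t)=1$ and all $V_s$ vanish. Combined with $\Ared_s=0$, this forces $CFK^\infty(Y,K)$ to be filtered chain homotopy equivalent to $CFK^\infty(Y,U)$, so $\widehat{HFK}(Y,K)\cong\hfhat(Y)$ is supported entirely in Alexander grading zero. Knot Floer homology detects the Seifert genus of a nullhomologous knot (Ozsv\'ath--Szab\'o, extended to arbitrary ambient manifolds by Ni), so $K$ has Seifert genus zero, bounds an embedded disk, and hence is the unknot.

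The main obstacle is the second step: showing that an isomorphism at a \emph{single} surgery slope $p/q$ is enough to pin down all the $A^+_s$ data. This requires careful bookkeeping of how the mapping cone decomposes across spin$^c$ structures, of the grading shifts that place the various $A^+_s$ and $B^+_s$ inside it, and of the connecting maps between them; the crux is to rule out any redistribution of ranks between summands that would let a nontrivial $K$ mimic the unknot's surgery. Once each $A^+_s$ is a tower with the correct $d$-invariant, the reconstruction of $CFK^\infty$ and the genus-detection step are comparatively standard.
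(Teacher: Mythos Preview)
This theorem is not proved in the paper at all: it is quoted as \cite[Theorem~8.2]{Gainullin-complement}, an external result of Gainullin, and the paper simply invokes it. The only proof-adjacent content the paper offers is a remark that in the special case where $Y$ is an L-space homology sphere, one can argue exactly as in \cite[Corollary~1.3]{OzsvathRational}, replacing the $S^3$ unknot-detection step by Ni's result \cite{NiNote} that knot Floer homology detects the unknot in homology spheres. So there is no ``paper's own proof'' to compare your proposal against.

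That said, your outline is in the right spirit and is broadly how Gainullin's argument runs: apply the rational surgery mapping cone for a nullhomologous knot in an L-space, use the L-space hypothesis on $Y_{p/q}(U)\cong Y\#L(p,q)$ together with the graded $\F[U]$-module isomorphism to force each $H_*(A^+_s)$ to be a single tower with the correct bottom grading, and then feed this into genus detection for knot Floer homology (Ni) to conclude $g(K)=0$. You have correctly flagged the genuine technical content: extracting, from a single slope $p/q$, enough information to pin down every $A^+_s$. One caution on your write-up: the implication ``$\Ared_s=0$ and all $V_s=0$ $\Rightarrow$ $CFK^\infty(Y,K)\simeq CFK^\infty(Y,U)$ as filtered complexes'' is stronger than what you need and not obviously true in general; what you actually use (and what suffices) is that these conditions force $\widehat{HFK}(Y,K)$ to be supported in Alexander grading $0$, whence Ni's genus detection gives the unknot. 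It would be cleaner to go directly to that conclusion rather than asserting a filtered chain homotopy equivalence.
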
  

It follows that nullhomologous knots in L-spaces are determined by their complements.  In our applications of Theorem~\ref{thm:surgery-characterization}, we will have the stronger assumption that $Y_{p/q}(K)$ and $Y_{p/q}(U)$ are orientation-preserving homeomorphic, except in the case when $Y$ is a homology sphere L-space.  We remark that in the case of a homology sphere, the result can be proved exactly as in \cite[Corollary 1.3]{OzsvathRational}.  (Here, the only change necessary from the case $Y=S^3$ is to use \cite{NiNote}, which shows that knot Floer homology detects the unknot in homology spheres.)  

Under some additional mild conditions, Theorem~\ref{thm:surgery-characterization} yields two stronger results.  

\begin{corollary}\label{cor:lens-cosmetic}
Let $K$ be a knot in an L-space $Y$ with $|H_1(Y;\Z)|$ prime.  If a non-trivial surgery on $K$ which is distance one from the trivial surgery gives $Y$, then $K$ is the unknot.   
\end{corollary}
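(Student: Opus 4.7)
The plan is to dichotomize on whether $[K]=0 \in H_1(Y;\Z) \cong \Z/p\Z$, handling the non-nullhomologous case via the distance obstruction of Lemma~\ref{lem:distance-one} and the nullhomologous case via Gainullin's surgery characterization, Theorem~\ref{thm:surgery-characterization}.

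If $[K] \neq 0$, then primality of $p$ forces $[K]$ to generate $H_1(Y)$, so $K$ is primitive in the sense of the discussion preceding Lemma~\ref{lem:distance-one}. Applying that lemma with $Y' = Y$, any nontrivial filling on the exterior yielding a rational homology sphere must sit at distance a multiple of $\gcd(p,p) = p \geq 2$ from the trivial slope, directly contradicting the distance-one hypothesis.

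If instead $[K] = 0$, then $K$ bounds a Seifert surface in $Y$, and the Seifert longitude $\lambda$ is nullhomologous in the exterior $M$. A linking-number argument (using that $\operatorname{lk}(\cdot, K)$ gives a splitting of the surjection $H_1(M) \twoheadrightarrow H_1(Y)$ with kernel generated by $\mu$) yields $H_1(M) \cong \Z\langle\mu\rangle \oplus \Z/p\Z$, so Dehn filling along a slope $a\mu + b\lambda$ has first homology of order $|a|\,p$. Requiring the filling to be $Y$ forces $|a| = 1$, and distance one from the trivial slope $\mu$ then forces $|b| = 1$; hence the nontrivial surgery in question is $(\pm 1)$-surgery on $K$ with respect to the Seifert framing. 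Since $Y_{\pm 1}(U) = Y$ trivially, the hypothesis produces $Y_{\pm 1}(K) \cong Y \cong Y_{\pm 1}(U)$, yielding in particular an isomorphism of $\hfp$'s as absolutely graded $\F[U]$-modules. Theorem~\ref{thm:surgery-characterization}, applied to the nullhomologous knot $K$ in the L-space $Y$, then forces $K$ to be the unknot.

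The main technical point is the homology computation identifying the candidate slope as $\pm 1$; once this is in place the rest is a direct appeal to Gainullin's theorem. One subtlety, per the remark following Theorem~\ref{thm:surgery-characterization}, is that "gives $Y$" must be interpreted as an orientation-preserving homeomorphism so that the resulting Heegaard Floer isomorphism respects the absolute grading; otherwise a supplementary $d$-invariant or Casson--Walker argument would be needed to rule out the orientation-reversing possibility.
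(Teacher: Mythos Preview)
Your proof is correct and follows the same two-case split as the paper: primitive knots are ruled out by Lemma~\ref{lem:distance-one}, and nullhomologous knots are handled via Theorem~\ref{thm:surgery-characterization}. The paper's proof is terser in the nullhomologous case, simply asserting that the result follows from Gainullin's theorem; your explicit computation of $H_1(M)$ to pin down the slope as $\pm 1$ (so that $Y_{\pm 1}(U)=Y$ and the hypotheses of Theorem~\ref{thm:surgery-characterization} are genuinely met) fills in exactly the step the paper leaves implicit, and your remark about orientation matches the paper's own caveat following that theorem.
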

\begin{proof}
If $K$ is nullhomologous, then the result follows from Theorem~\ref{thm:surgery-characterization}.  On the other hand, if $K$ is not nullhomologous, we see that $K$ is primitive, because $|H_1(Y)|$ is prime.  Therefore, it follows from Lemma~\ref{lem:distance-one} that the distance between the trivial and non-trivial surgeries is a multiple of $|H_1(Y)|$, which is a contradiction.      
\end{proof}

\begin{corollary} \label{cor:invisible-lens}
Let $Y$ be an {\em invisible} 3-manifold, i.e.\ a homology sphere L-space with $d(Y)=0$.  If there is a knot $K\subset Y$ and a rational number $\frac{p}{q}$ satisfying $Y_{p/q}(K) = L(p,q)$, then $Y = S^3$ and $K$ is the unknot.
\end{corollary}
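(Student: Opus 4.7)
My plan is to apply Gainullin's surgery characterization of the unknot (Theorem~\ref{thm:surgery-characterization}) to the knot $K\subset Y$. Since $Y$ is a homology sphere, $K$ is automatically nullhomologous, and $Y$ is an L-space by hypothesis, so the theorem will apply as soon as I show that $\hfp(Y_{p/q}(K))$ and $\hfp(Y_{p/q}(U_Y))$ agree as absolutely-graded $\F[U]$-modules, where $U_Y$ denotes the unknot in $Y$. The assumption $Y_{p/q}(K) = L(p,q)$ identifies the first of these with $\hfp(L(p,q))$.

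For the second group, note that $U_Y$ bounds an embedded disk contained in a ball, so $Y_{p/q}(U_Y) = Y \# L(p,q)$. Invisibility of $Y$---that is, $Y$ being an L-space homology sphere with $d(Y)=0$---gives $\hfp(Y) \cong \tower_{0} \cong \hfp(S^3)$ as absolutely-graded $\F[U]$-modules, or equivalently $HF^-(Y) \cong HF^-(S^3) \cong \F[U]$. The K\"unneth theorem for Heegaard Floer connected sums then yields $\hfp(Y \# L(p,q)) \cong \hfp(L(p,q))$ as absolutely-graded $\F[U]$-modules, since tensoring over $\F[U]$ with $\F[U]$ in the appropriate absolute grading is the identity operation.

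With these two computations in hand, Theorem~\ref{thm:surgery-characterization} forces $K$ to be the unknot $U_Y$, and consequently $L(p,q) = Y_{p/q}(K) = Y_{p/q}(U_Y) = Y \# L(p,q)$. Uniqueness of prime decompositions of closed oriented $3$-manifolds then forces $Y = S^3$, as desired. The point that requires the most care is verifying that the K\"unneth identification of $\hfp(Y \# L(p,q))$ with $\hfp(L(p,q))$ respects absolute gradings, which is exactly where the hypothesis $d(Y)=0$ is essential; beyond that, the argument is a direct application of Gainullin's theorem.
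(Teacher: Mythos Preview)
Your proof is correct and follows essentially the same approach as the paper: use invisibility to get $\hfp(Y)\cong\hfp(S^3)$, apply the K\"unneth formula to identify $\hfp(Y_{p/q}(U))=\hfp(Y\#L(p,q))$ with $\hfp(L(p,q))=\hfp(Y_{p/q}(K))$, invoke Theorem~\ref{thm:surgery-characterization} to conclude $K$ is unknotted, and then cancel $L(p,q)$ from $L(p,q)=Y\#L(p,q)$. The only difference is that you have spelled out a few steps (nullhomologous, absolute gradings, prime decomposition) more explicitly than the paper does.
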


\begin{proof}
We have $\hfp(Y_{p/q}(K)) = \hfp(L(p,q)) = \hfp(Y \# L(p,q)) = \hfp(Y_{p/q}(U))$ by the K\"unneth formula \cite{Ozsvath2004a} for $\hfp$ and the fact that $\hfp(Y) = \hfp(S^3)$.  Now $K$ is the unknot by Theorem~\ref{thm:surgery-characterization}, so $L(p,q) = Y_{p/q}(K) = Y\#L(p,q)$ implies that $Y=S^3$.
\end{proof}

%!TEX root = smalldeterminant.tex

\section{Primitive knot complements with several lens space fillings}
\label{sec:consecutive}

In this section we will prove the following theorem, which we will use to study when two lens spaces $L(p,1)$ and $L(p+1,1)$ can belong to a triad.

\begin{theorem}
\label{thm:lp1-fillings}
Let $M$ be a homology $S^1 \times D^2$ with torus boundary, and suppose there are a pair of slopes $\gamma,\gamma'\subset \partial M$ with $\Delta(\gamma,\gamma')=1$ and a positive integer $p\not\equiv 1\pmod{12}$ such that Dehn filling along $\gamma$ and $\gamma'$ produces the lens spaces $L(p,1)$ and $L(p+1,1)$ respectively.  Then $M$ is a solid torus.
\end{theorem}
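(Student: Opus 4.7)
The strategy is to view $M$ as the exterior of a nullhomologous knot in an integer homology sphere, and then combine the Casson-Walker surgery formula with Heegaard Floer $d$-invariant constraints. Let $\mu,\lambda$ be a basis of $H_1(\partial M;\Z)$ with $\lambda$ the rational longitude (which is a true longitude since $M$ is a homology $S^1 \times D^2$). Writing $\gamma = p\mu + a\lambda$ and $\gamma' = (p+1)\mu + a'\lambda$, the distance condition $\Delta(\gamma,\gamma')=1$ becomes $pa' - (p+1)a = \pm 1$. The slope $\gamma - \gamma' = -\mu + (a-a')\lambda$ satisfies $\Delta(\gamma-\gamma',\lambda) = 1$, so filling along it yields an integer homology sphere $\Sigma$, and the triple $(L(p,1), L(p+1,1), \Sigma)$ forms a triad. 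I regard $M$ as the exterior of a nullhomologous knot $J \subset \Sigma$; in the meridian/Seifert-longitude basis for $J$, the slopes $\gamma$ and $\gamma'$ correspond to $p/a$- and $(p+1)/a'$-surgery on $J$.

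\textbf{Step 1.} Apply the Casson-Walker surgery formula \eqref{eq:cw-surgery-alternate} to each filling,
\[ \lambda(L(p,1)) = \lambda(L(p,a)) + \lambda(\Sigma) + \tfrac{a}{p}A(J), \qquad \lambda(L(p+1,1)) = \lambda(L(p+1,a')) + \lambda(\Sigma) + \tfrac{a'}{p+1}A(J), \]
and subtract. Using $a(p+1)-a'p = \mp 1$ eliminates $\lambda(\Sigma)$, and Walker's formula $\lambda(L(n,k)) = -\tfrac{1}{2}s(k,n)$ turns the result into a Diophantine identity relating $a,a',A(J)$, and Dedekind sums in $p$ and $p+1$.

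\textbf{Step 2.} Apply the surgery exact triangle (Theorem~\ref{thm:surgery-triangle}) to the triad. Since $L(p,1)$ and $L(p+1,1)$ are L-spaces, rank and parity considerations combined with Lemma~\ref{lem:surgery-fplus} force $\mathrm{rk}\,\hfhat(\Sigma) = 1$, so $\Sigma$ is itself an L-space. The cobordism-grading formula \eqref{eq:surgery-d-relation}, applied to the cobordisms in the triangle and combined with the known $d$-invariants of $L(p,1)$ and $L(p+1,1)$, constrains $d(\Sigma)\pmod 2$; pairing this with Theorem~\ref{thm:cw-sum-d} and the Diophantine identity from Step~1 forces $d(\Sigma) = 0$, so $\Sigma$ is invisible. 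Feeding $\lambda(\Sigma) = 0$ back into Step~1 forces $A(J) = 0$ and shows that $a \equiv \pm 1\pmod p$, so $L(p,a) = L(p,1)$ as oriented manifolds. Then $\Sigma_{p/a}(J) = L(p,a)$ is precisely the hypothesis of Corollary~\ref{cor:invisible-lens}, which yields $J = U$ and $\Sigma = S^3$. Hence $M$ is the exterior of the unknot in $S^3$, a solid torus.

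\textbf{Main obstacle.} The hardest part is Step~2 — forcing $\Sigma$ to be an invisible L-space using only the triad data — and it is here that the hypothesis $p\not\equiv 1\pmod{12}$ enters. Walker's formula yields $\lambda(L(p,1)) = -\tfrac{(p-1)(p-2)}{24p}$, and the Diophantine identity from Step~1 together with the $d$-invariant constraints from the exact triangle admits extraneous rational solutions with $A(J)\neq 0$ precisely when $p \equiv 1\pmod{12}$, corresponding to a divisibility coincidence in the numerator $(p-1)(p-2)$; excluding that residue class is what makes the combined constraints pin down $\Sigma$ as invisible and close the argument.
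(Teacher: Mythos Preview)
Your setup contains a genuine error that propagates through the whole argument. You fill $M$ along $\gamma-\gamma'$ to produce $\Sigma$, so the meridian of the core $J\subset\Sigma$ is the slope $\gamma-\gamma'$, not your original $\mu$. Expressing $\gamma$ and $\gamma'$ in the basis $(\mu_J,\lambda)=(\gamma'-\gamma,\lambda)$ gives $\gamma = p\mu_J + \epsilon\lambda$ and $\gamma' = (p+1)\mu_J + \epsilon\lambda$ with $\epsilon = (p+1)a - pa' = \pm 1$; thus the fillings are $\epsilon p$- and $\epsilon(p+1)$-surgery on $J$, \emph{integer} surgeries, not $p/a$- and $(p+1)/a'$-surgery. (This is exactly the content of Lemma~\ref{lem:lp1f-meridian}.) Your Step~1 formulas, with their $L(p,a)$ and $\frac{a}{p}A(J)$ terms, are therefore not the right ones, and the ``Diophantine identity'' you describe does not arise.

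Once the slopes are corrected, the argument splits on the sign of $\epsilon$. If $\epsilon=+1$ the Casson--Walker equations are simply $\lambda(\Sigma)+\frac{1}{p}A(J)=0=\lambda(\Sigma)+\frac{1}{p+1}A(J)$, giving $\lambda(\Sigma)=A(J)=0$ immediately --- no Dedekind sums are needed. The hypothesis $p\not\equiv 1\pmod{12}$ is used solely to exclude $\epsilon=-1$, and this is done by an elementary linking--form argument (Lemma~\ref{lem:lp1f-epsilon}): $\Sigma_{-p}(J)=L(p,1)$ would force $-1$ to be a square mod $p$ and mod $p+1$, which pins down $p\equiv 1\pmod{12}$. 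Your proposed mechanism --- a divisibility coincidence in $(p-1)(p-2)$ controlling whether the Casson--Walker identity admits extraneous solutions --- is not where the hypothesis actually bites; indeed the integrality constraint you suggest would also allow $p\equiv 10\pmod{12}$, so it cannot be the right obstruction. Finally, your Step~2 is too vague: ``rank and parity considerations'' do not by themselves force $\Sigma$ to be an L-space. The paper's argument (Proposition~\ref{prop:lp3f-invisible}) is a careful grading computation on the positive-definite cobordism $\Sigma\to\Sigma_p(J)=L(p,1)$, which shows $\hfred(\Sigma)$ lives in odd degree, and then uses $\lambda(\Sigma)=0$ together with Theorem~\ref{thm:cw-sum-d} and a negative-definite bounding argument to pin down $d(\Sigma)=0$; this crucially needs $\epsilon=+1$ for the cobordism to have the right sign.
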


Let $\lambda \subset \partial M$ be a closed, oriented curve such that $[\lambda]$ generates $\ker(H_1(\partial M) \to H_1(M))$, and declare a meridian to be any oriented curve $\mu \subset \partial M$ such that $\mu \cdot \lambda = 1$.  For convenience, given a simple, closed curve $s \subset \partial M$ we will also use $s$ to denote its homology class in $H_1(\partial M) \cong \ZZ^2$ and its induced slope.  More generally, the notation $\lambda$ for a slope will now always be used to refer to the rational longitude.  

\begin{lemma} \label{lem:lp1f-meridian}
Given $M,\gamma,\gamma'$ satisfying the hypotheses of Theorem~\ref{thm:lp1-fillings}, there is a meridian $\mu$ and a sign $\epsilon \in \{\pm 1\}$ such that $\gamma = p\mu + \epsilon\lambda$ and $\gamma' = (p+1)\mu + \epsilon\lambda$.
\end{lemma}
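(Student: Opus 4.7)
My plan is to realize the statement as a change-of-basis computation in $H_1(\partial M)\cong\ZZ^2$. I would first pick any auxiliary meridian $\mu_0$ with $\mu_0\cdot\lambda=1$, so that $\{\mu_0,\lambda\}$ is a basis, and write $\gamma=a\mu_0+b\lambda$ and $\gamma'=a'\mu_0+b'\lambda$. Applying the identity $|H_1(M(\alpha))|=\Delta(\alpha,\lambda)$ recalled just before Lemma~\ref{lem:distance-one} to the two lens space fillings gives $|a|=p$ and $|a'|=p+1$. Reversing the sign of $\mu_0$ if necessary arranges $a=+p$, and since the Dehn filling $M(\gamma')$ depends only on the underlying unoriented slope, I may simultaneously reverse the orientation of $\gamma'$ if needed to arrange $a'=+(p+1)$.

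Next, I would use the distance hypothesis to pin down $b$ and $b'$ modulo $p$. With the sign conventions just fixed, $\Delta(\gamma,\gamma')=1$ reads
\[ |pb'-(p+1)b|=1, \]
so $pb'-(p+1)b=\delta$ for some $\delta\in\{\pm 1\}$. Reducing modulo $p$ gives $b\equiv -\delta\pmod{p}$, so there is a unique integer $k$ with $b-pk=-\delta$. Replacing the basis by $\{\mu,\lambda\}$ for $\mu:=\mu_0+k\lambda$ converts the coordinates to $\gamma=p\mu+(b-pk)\lambda$ and $\gamma'=(p+1)\mu+(b'-(p+1)k)\lambda$. The integer $pb'-(p+1)b$ is manifestly invariant under this base change, so the new $\lambda$-coefficient of $\gamma'$ is forced by the same relation to equal $-\delta$ as well. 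Setting $\epsilon=-\delta$ yields $\gamma=p\mu+\epsilon\lambda$ and $\gamma'=(p+1)\mu+\epsilon\lambda$, as claimed.

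There is no serious obstacle here; the entire argument is linear algebra on $H_1(\partial M)$. The one point deserving care is to use all three of the available degrees of freedom -- the sign of $\mu_0$, the orientation of the unoriented slope $\gamma'$, and the meridian shift $\mu_0\mapsto\mu_0+k\lambda$ -- in the correct order so that the two $\mu$-coefficients come out positive and the two $\lambda$-coefficients come out equal. In particular, the arithmetic hypothesis $p\not\equiv 1\pmod{12}$ and the full structure of $M$ beyond $H_1(M)\cong\ZZ$ play no role in this lemma and presumably enter only later in the proof of Theorem~\ref{thm:lp1-fillings}.
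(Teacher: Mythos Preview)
Your proof is correct and follows essentially the same change-of-basis argument as the paper's; the paper is slightly more direct, simply setting $\mu=\mu_0+(d-c)\lambda$ and $\epsilon=(p+1)c-pd$ and then observing $|\epsilon|=\Delta(\gamma,\gamma')=1$, without the intermediate mod-$p$ reduction. One small quibble: to normalize $a=+p$ you should reverse the orientation of $\gamma$ rather than of $\mu_0$, since flipping $\mu_0$ would give $\mu_0\cdot\lambda=-1$ and your final $\mu=\mu_0+k\lambda$ would then fail to be a meridian in the paper's sense.
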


\begin{proof}
Fix an initial choice $\mu_0$ of meridian, so that $\mu_0$ and $\lambda$ generate $H_1(\partial M)$ as an abelian group.  Then Dehn filling along $a\mu_0 + b\lambda$ produces a closed 3-manifold $Y_{a/b}$ with first homology $\ZZ/a\ZZ$, so for some orientations of $\gamma,\gamma'$ and integers $c,d$ we can write
\begin{align*}
\gamma &= p\mu_0 + c\lambda, &
\gamma' &= (p+1)\mu_0 + d\lambda.
\end{align*}
Letting $\mu = \mu_0 + (d-c)\lambda$ and $\epsilon=(p+1)c-pd$, it follows that
\begin{align*}
\gamma &= p\mu + \epsilon\lambda, &
\gamma' &= (p+1)\mu + \epsilon\lambda,
\end{align*}
and by definition we have $|\epsilon| = \Delta(\gamma,\gamma')$, which is $1$ by assumption.
\end{proof}

In other words, Lemma~\ref{lem:lp1f-meridian} yields a slope $\mu$ such that Dehn filling $M$ along $\mu$ produces a homology sphere $Y$ with core $K$ such that $Y_{\epsilon p}(K) = L(p,1)$ and $Y_{\epsilon(p+1)}(K) = L(p+1,1)$.  In fact, under the hypotheses of Theorem~\ref{thm:lp1-fillings} we claim that we can take $\epsilon = 1$.  

\begin{lemma} \label{lem:lp1f-epsilon}
If $Y$ is a homology sphere and $K \subset Y$ a knot such that $Y_{-p}(K) = L(p,1)$ and $Y_{-(p+1)}(K) = L(p+1,1)$ for some positive integer $p$, then $p \equiv 1 \pmod{12}$.
\end{lemma}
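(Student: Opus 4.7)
My plan is to apply the Casson--Walker surgery formula \eqref{eq:cw-surgery-alternate} at both slopes $-p$ and $-(p+1)$ to set up a linear system in the two unknowns $\lambda(Y)$ and $A(K)$, solve it explicitly as a function of $p$, and then force $p \equiv 1 \pmod{12}$ from the fact that each of these unknowns must be an integer.

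The first input I need is $\lambda(L(p,1))$ in closed form. Since $L(p,1)$ is an L-space, $\hfred$ vanishes and Theorem~\ref{thm:cw-sum-d} reduces to $p \cdot \lambda(L(p,1)) = -\tfrac{1}{2}\sum_{\spint} d(L(p,1),\spint)$. Using the standard formula $d(L(p,1),i) = \frac{(2i-p)^2 - p}{4p}$ for $0 \le i < p$ and the elementary identity $\sum_{i=0}^{p-1}(2i-p)^2 = \tfrac{1}{3}p(p^2+2)$, the sum telescopes to $\sum_i d(L(p,1),i) = \frac{(p-1)(p-2)}{12}$, so
\[
\lambda(L(p,1)) = -\frac{(p-1)(p-2)}{24p}.
\]
Next I would apply \eqref{eq:cw-surgery-alternate}. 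Noting that $S^3_{-p}(U) = -L(p,1)$ and that $\lambda$ is anti-symmetric under orientation reversal (trivially so on L-spaces since $\hfred = 0$), the formula at slopes $-p$ and $-(p+1)$ respectively becomes
\[
2\lambda(L(p,1)) = \lambda(Y) - \frac{A(K)}{p}, \qquad 2\lambda(L(p+1,1)) = \lambda(Y) - \frac{A(K)}{p+1}.
\]
Substituting the closed form above and eliminating one unknown at a time yields
\[
\lambda(Y) = -\frac{p-1}{6}, \qquad A(K) = -\frac{(p-1)(p+2)}{12}.
\]

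Finally, I would extract the mod~$12$ condition from integrality. Since $Y$ is an integer homology sphere, $\lambda(Y)$ equals the Casson invariant and so lies in $\ZZ$, forcing $6 \mid (p-1)$, i.e.\ $p \equiv 1$ or $7 \pmod{12}$. Since $A(K) = \tfrac{1}{2}\Delta''_K(1)$ and $\Delta_K$ is a symmetric Laurent polynomial with integer coefficients, one computes $A(K) \in \ZZ$, forcing $12 \mid (p-1)(p+2)$, which allows only $p \equiv 1$ or $10 \pmod{12}$. The unique common residue is $p \equiv 1 \pmod{12}$, as required.

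The step I expect to be the main source of friction is purely bookkeeping: correctly identifying $S^3_{-p}(U)$ as $-L(p,1)$, confirming the sign behavior of $\lambda$ under orientation reversal in the normalization of Theorem~\ref{thm:cw-sum-d}, and carrying out the Dedekind-style sum for the $d$-invariants without arithmetic errors. Conceptually the argument is just ``two surgery-formula equations, two unknowns,'' but the punchline only lands mod~$12$ if both computations are done consistently.
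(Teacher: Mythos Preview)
Your proof is correct, and it takes a genuinely different route from the paper's. The paper argues via linking forms: since $Y_{-p}(K)$ has linking form $-\tfrac{1}{p}$ while $L(p,1)$ has linking form $\tfrac{1}{p}$, the hypothesis forces $-1$ to be a square mod $p$ (and likewise mod $p+1$), and an elementary congruence analysis then pins down $p\equiv 1\pmod{12}$. Your argument instead extracts the same conclusion from the Casson--Walker surgery formula, solving for $\lambda(Y)=-\tfrac{p-1}{6}$ and $A(K)=-\tfrac{(p-1)(p+2)}{12}$ and invoking integrality of both. The linking-form proof is lighter-weight (no Heegaard Floer input, no $d$-invariant sums), while your approach has the virtue of producing the exact values of $\lambda(Y)$ and $A(K)$ as a byproduct; note in particular that your computation dovetails with the paper's use of \eqref{eq:cw-surgery-alternate} in the $\epsilon=+1$ case immediately following the lemma, where the analogous system collapses to $\lambda(Y)=A(K)=0$. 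One small expository point: you might state explicitly that $\lambda(-Y)=-\lambda(Y)$ is a general property of the Casson--Walker invariant rather than appealing to the L-space case, since the paper's Theorem~\ref{thm:cw-sum-d} formulation makes this slightly less transparent when $\hfred\neq 0$.
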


\begin{proof}
The two sides of $Y_{-p}(K) = L(p,1)$ have linking forms $-\frac{1}{p}$ and $\frac{1}{p}$, so these are equivalent and thus $-1$ is a square mod $p$.  It follows that $p$ cannot be a multiple of 4 or of any prime $q\equiv 3\pmod{4}$, since $-1$ is not a square modulo any of these numbers.  Moreover, $p$ cannot be $3\pmod{4}$, since it is a product of primes which are either $1\pmod{4}$ or 2, so $p$ must be either $1$ or $2$ modulo 4.  Since it is not a multiple of 3, it must also be either $1$ or $2$ modulo 3.  But the same holds true for $p+1$, since the linking forms $-\frac{1}{p+1}$ and $\frac{1}{p+1}$ are also equivalent, and so the only way this can be possible is if $p\equiv 1$ and $p+1\equiv 2$ modulo both 3 and 4.  In particular, $p \equiv 1 \pmod{12}$ as claimed.
\end{proof}

Given $Y_p(K) = L(p,1) = S^3_p(U)$ and likewise $Y_{p+1}(K) = S^3_{p+1}(U)$, \eqref{eq:cw-surgery-alternate} now implies that
\begin{equation}\label{eq:cw-pos-surgery}
\lambda(Y) + \frac{1}{p}A(K) = \lambda(Y) + \frac{1}{p+1}A(K) = 0,
\end{equation}
and thus $\lambda(Y) = A(K) = 0$.

\begin{proposition}
\label{prop:lp3f-invisible}
Let $Y$ be a homology sphere with $\lambda(Y) \geq 0$, and suppose for some knot $K \subset Y$ and integer $p\geq 2$ that $Y_p(K) = L(p,1)$ and $Y_{p+1}(K)$ is an L-space.  Then $Y$ is invisible, i.e.\ an L-space homology sphere with correction term $d(Y)=0$, and in fact $\lambda(Y)=0$.
\end{proposition}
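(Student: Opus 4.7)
The plan is to combine the Casson--Walker surgery formula \eqref{eq:cw-surgery-alternate} with the correction-term formula of Theorem~\ref{thm:cw-sum-d} applied to the L-space $Y_{p+1}(K)$, then use the surgery exact triangle to bound the resulting $d$-invariants. Applying \eqref{eq:cw-surgery-alternate} to $Y_p(K) = L(p,1) = S^3_p(U)$ yields
\[
0 = \lambda(Y_p(K)) - \lambda(S^3_p(U)) = \lambda(Y) + \frac{A(K)}{p},
\]
so $A(K) = -p\lambda(Y) \leq 0$ by the hypothesis. Applying \eqref{eq:cw-surgery-alternate} again to $Y_{p+1}(K)$ and subtracting the analogous identity for $S^3_{p+1}(U) = L(p+1,1)$ gives
\[
\lambda(Y_{p+1}(K)) - \lambda(L(p+1,1)) = \lambda(Y) + \frac{A(K)}{p+1} = \frac{\lambda(Y)}{p+1}.
\]
Since both $Y_{p+1}(K)$ and $L(p+1,1)$ are L-spaces (and have the common linking form $1/(p+1)$, so Lemma~\ref{lem:d-2p-Z} aligns their Spin$^c$ structures appropriately), Theorem~\ref{thm:cw-sum-d} converts this into
\[
\lambda(Y) = \tfrac{1}{2} \sum_\spint d(L(p+1,1),\spint) \;-\; \tfrac{1}{2}\sum_\spint d(Y_{p+1}(K),\spint).
\]

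To conclude that $\lambda(Y) = 0$, I would show that the right-hand side is $\leq 0$; combined with $\lambda(Y) \geq 0$ this forces equality. The bound comes from analyzing the negative-definite 2-handle cobordism $W\colon Y_p(K) = L(p,1) \to Y_{p+1}(K)$ from the surgery exact triangle (its signature is $-1$ by the convention of Section~\ref{sec:background}). Since the target is an L-space, Lemma~\ref{lem:surgery-fplus} makes $F^+_W$ surjective, while Theorem~\ref{thm:os-gradings} gives absolute grading shift $\frac{c_1(\spinc)^2 + 1}{4}$ for each $\spinc \in \spc(W)$. Comparing term by term with the grading shifts for the standard $-1$-framed 2-handle cobordism $L(p,1) \to L(p+1,1)$ (a model cobordism which shares the lens-space source and has the same topology of handle attachment) yields the desired sum inequality on $d$-invariants.

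Once $\lambda(Y) = 0$ is established, we also have $A(K) = 0$ and the equality $\sum_\spint d(Y_{p+1}(K),\spint) = \sum_\spint d(L(p+1,1),\spint)$. Exactness of the surgery triangle together with the surjectivity of $F^+_W$ then forces the adjacent map $HF^+(Y_{p+1}(K)) \to HF^+(Y)$ to vanish and $HF^+(Y) \hookrightarrow HF^+(L(p,1))$ to be injective; tracking the graded $\F[U]$-module structure together with the $d$-invariant equality rules out a nonzero reduced part, so $Y$ is an L-space. Finally, Theorem~\ref{thm:cw-sum-d} applied to the L-space homology sphere $Y$ gives $\lambda(Y) = -d(Y)/2$, so $d(Y) = 0$ and $Y$ is invisible.

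The main obstacle is the $d$-invariant sum inequality: it requires a careful matching of Spin$^c$ structures on the knot-cobordism $W$ with those on the model lens-space cobordism and a tight tracking of when the grading shifts $\frac{c_1(\spinc)^2+1}{4}$ achieve their extremal values, which in turn uses the surjectivity of $F^+_W$ to locate the top-tower contributions in each Spin$^c$ summand.
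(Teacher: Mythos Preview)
Your approach diverges from the paper's, and the step you yourself flag as ``the main obstacle'' is a genuine gap that your sketch does not close.

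The paper never compares $d$-invariants of $Y_{p+1}(K)$ with those of $L(p+1,1)$.  It analyzes the positive-definite cobordism $W\colon Y\to Y_p(K)=L(p,1)$ in the triangle
\[
\cdots \xrightarrow{F^+_V}\hfp(Y)\xrightarrow{F^+_W}\hfp(L(p,1))\to\hfp(Y_{p+1}(K))\to\cdots.
\]
A direct parity computation, using the explicit values $d(L(p,1),i)=\frac{(2i-p)^2}{4p}-\frac14$, shows that every $F^+_{W,\spinc}$ kills the even-graded part of $\hfp(Y)$.  Hence every even-graded element lies in $\ker F^+_W=\img F^+_V$, and since $Y_{p+1}(K)$ is an L-space this forces such elements to be infinitely $U$-divisible; thus $\hfred(Y)$ sits entirely in odd degree, and Theorem~\ref{thm:cw-sum-d} gives $\frac12 d(Y)=-\rank\hfred(Y)-\lambda(Y)\le 0$.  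The reverse inequality $d(Y)\ge 0$ is topological: turning $W$ upside down and reversing orientation, then gluing on a negative-definite plumbing bounded by $L(p,1)$, produces a negative-definite filling of $Y$, impossible if $d(Y)<0$.  This forces $\hfred(Y)=0$, $\lambda(Y)=0$, and $d(Y)=0$ all at once.

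By contrast, your route rests on $\sum_\spint d(Y_{p+1}(K),\spint)\ge\sum_\spint d(L(p+1,1),\spint)$.  Lemma~\ref{lem:surgery-fplus} applied to the negative-definite $X\colon L(p,1)\to Y_{p+1}(K)$ gives only $d\bigl(Y_{p+1}(K),\spinc|_{Y_{p+1}}\bigr)\ge d\bigl(L(p,1),\spinc|_{L(p,1)}\bigr)+\frac{c_1(\spinc)^2+1}{4}$ for each $\spinc\in\spc(X)$; the model cobordism $L(p,1)\to L(p+1,1)$ satisfies the \emph{same} inequality, so nothing compares the two targets without an identification of $\spc(X)$ with the $\spc$ structures on the model that respects restriction to both boundary components and realizes equality on the model side.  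You have not supplied one, and since $X$ and the model are attached along a priori different knots in $L(p,1)$, none is evident.  Your endgame has a related problem: the triangle map $\hfp(L(p,1))\to\hfp(Y_{p+1}(K))$ is $\sum_\spinc F^+_{X,\spinc}$, and over $\F$ a sum of surjections need not be surjective, so the vanishing of $\hfp(Y_{p+1}(K))\to\hfp(Y)$ and the injectivity of $\hfp(Y)\to\hfp(L(p,1))$ do not follow as stated.  The paper's parity argument on $\hfred(Y)$, together with the negative-definite bounding trick, bypasses both of these issues.
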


\begin{proof}
Writing $Y_{p+1} = Y_{p+1}(K)$ for convenience, we use the surgery exact triangle
\[ \dots \xrightarrow{F_V^+} \hfp(Y) \xrightarrow{F_W^+} \hfp(L(p,1)) \to \hfp(Y_{p+1}) \to \dots \]
where $W$ and $V$ are the corresponding 2-handle cobordisms from $Y$ to $L(p,1)$ and from $Y_{p+1}$ to $Y$ respectively.  The latter two groups in the triangle have the form
\[ \hfp(L(p,1)) = \bigoplus_{i=0}^{p-1} \hfp(L(p,1),\spint_i) = \bigoplus_{i=0}^{p-1} \tower_{d(L(p,1),i)} \]
and $\hfp(Y_{p+1}) = \bigoplus_{i=0}^{p} \tower_{d(Y_{p+1},i)}$, with the subscripts denoting the grading of the bottom-most element $1$ in each tower $\tower = \F[U,U^{-1}] / (U\cdot \F[U])$.  In particular we have
\[ d(L(p,1),i) = \frac{(2i-p)^2}{4p} - \frac{1}{4}, \]
computed as in \cite{OzsvathGraded}.  We also know that $\hfp(Y) \cong \tower_{d(Y)} \oplus \hfred(Y)$, with $d(Y)$ an even integer and $\hfred(Y) = \hfp(Y)/\img(U^k)$ for $k \gg 0$.  Our goal is to show that $\hfred(Y)=0$ and $d(Y)=0$.

The map $F_W^+: \hfp(Y) \to \hfp(L(p,1))$ decomposes into summands $F^+_{W,\spinc}$ for each $\spinc \in \spc(W)$.  If we let $\Sigma \subset W$ be the result of capping off a Seifert surface for $K$ then $H_2(W) \cong \ZZ$ is generated by $[\Sigma]$, which has self-intersection $p$, and we can label the $\spc$ structures on $W$ as $\spinc_{n}$ ($n\in\ZZ$) such that $\langle c_1(\spinc_{n}), [\Sigma] \rangle = 2n+p$.  Then the $\spc$ structure $\spint_{n\!\!\pmod{p}} = \spinc_{n}|_{L(p,1)}$ is determined by $n\!\pmod{p}$.  
Now if $[D] \in H_2(W,\partial W)\cong \ZZ$ is the cocore of the 2-handle, then we must have $c_1(\spinc_{n}) = (2n+p)PD([D])$.  The natural map $H_2(W) \to H_2(W,\partial W)$ sends $(2n+p)[\Sigma]$ to $p(2n+p)[D]$, so $c_1(\spinc_{n})^2 = \frac{(2n+p)^2}{p^2} [\Sigma]^2 = \frac{(2n+p)^2}{p}$.  It follows from Theorem~\ref{thm:os-gradings} that
\[ \deg(F^+_{W,\spinc_{n}}) = \frac{(2n+p)^2}{4p} - \frac{5}{4}, \]
since $\chi(W)=\sigma(W)=1$.

If $x\in \hfp(Y)$ is a homogeneous element of even grading, then $F^+_{W,\spinc_{n}}(x)$ has grading $\operatorname{gr}(x) + \deg(F^+_{W,\spinc_{n}}) \equiv \frac{(2n+p)^2}{4p}-\frac{5}{4} \pmod{2}$.  On the other hand, each element of the codomain $\hfp(L(p,1), \spint_{n\!\!\pmod{p}})$ has grading congruent to
\[ d(L(p,1),\spint_{n\!\!\!\!\!\pmod{p}}) = \frac{(2n-p)^2}{4p} - \frac{1}{4} \equiv \left(\frac{(2n+p)^2}{4p}-\frac{5}{4}\right)+1 \pmod{2}, \]
and so $F^+_{W,\spinc_{n}}(x)$ must be zero.  Thus $x \in \ker(F^+_W)$.

Next, since every element of $\hfp(Y_{p+1})$ is in $\img(U^k)$ for all $k$, the same is true for $\img(F^+_V) \subset \hfp(Y)$.  By exactness the latter is equal to $\ker(F^+_W)$, hence all elements of $\hfp(Y)$ with even grading are in $\img(U^k)$.  It follows that $\hfred(Y)$ is supported entirely in odd gradings, and so by Theorem~\ref{thm:cw-sum-d} and the assumption that $\lambda(Y) \geq 0$, we have
\[ \frac{1}{2}d(Y) = \chi(\hfred(Y)) - \lambda(Y) = -\operatorname{rank}(\hfred(Y)) - \lambda(Y) \leq 0. \]

Now suppose that either $Y$ is not an L-space or $\lambda(Y) > 0$.  Then we have shown that $d(Y) < 0$, and so $Y$ is not the boundary of any smooth, negative definite 4-manifold by \cite[Corollary~9.8]{OzsvathGraded}.  If we take the positive definite 2-handle cobordism $W$ from $Y$ to $L(p,1)$, turn it upside down, and reverse its orientation, then we get a negative definite 2-handle cobordism $W'$ from $L(p,1)$ to $Y$, and $L(p,1)$ is the boundary of a negative definite linear plumbing $X$ of disk bundles over $S^2$, so the composition $Z = X \cup_{L(p,1)} W'$ is negative definite with $\partial Z = Y$.  This is a contradiction, so $Y$ must in fact be an L-space with $\lambda(Y) = 0$, and we have $d(Y) = -2(\operatorname{rank}(\hfred(Y))+\lambda(Y)) = 0$.
\end{proof}

\begin{proof}[Proof of Theorem~\ref{thm:lp1-fillings}]
Using Lemmas~\ref{lem:lp1f-meridian} and \ref{lem:lp1f-epsilon}, we have a homology sphere $Y$ obtained from $M$ by Dehn filling with core $K$ so that $Y_p(K) = L(p,1)$ and $Y_{p+1}(K) = L(p+1,1)$.  It follows from \eqref{eq:cw-pos-surgery} that $\lambda(Y)=0$, and so Proposition~\ref{prop:lp3f-invisible} ensures that $Y$ is an invisible manifold.  Since $Y$ is invisible and $Y_p(K) = L(p,1)$, it follows from Corollary~\ref{cor:invisible-lens} that $Y=S^3$ and $K$ is the unknot, and so the exterior $M$ of $K$ is a solid torus as desired.
\end{proof}
%!TEX root = smalldeterminant.tex

\section{Knots in $\rpthree$ with distance-one lens space fillings of order 5}
\label{sec:surgery25}

In this section we address the question of when $\rpthree$ and a lens space of the form $L(5,q)$ can belong to a triad.  

\begin{theorem} \label{thm:rp3-l5q}
Let $M$ be a manifold with torus boundary for which a pair of distance-one Dehn fillings produce $\rpthree$ and a lens space $L(5,q)$.  Then $M$ is a solid torus.
\end{theorem}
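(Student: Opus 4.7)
The plan is to adapt the proof of Theorem~\ref{thm:lp1-fillings} to the setup where the two filling orders are $2$ and $5$. First, since $\gcd(2,5)=1$, a direct computation of $|H_1(M(\alpha))|$ for a general filling shows that $M$ must be a homology $S^1\times D^2$: writing $H_1(M)=\Z\oplus T$ with $\mu\mapsto(1,s)$ and $\lambda\mapsto(0,t_0)$, the order $|H_1(M(\alpha))|$ for $\alpha=a\mu+b\lambda$ works out to $|a|\cdot|T|$, and having both orders $2$ and $5$ attained forces $|T|=1$. Next, choose a meridian $\mu\subset\partial M$ as in Lemma~\ref{lem:lp1f-meridian} to realize $M$ as the exterior of a knot $K$ in a homology sphere $Y$, and after normalization write $\gamma=2\mu+\lambda$, $\gamma'=5\mu+b\lambda$ with $b\in\{2,3\}$ (forced by $|2b-5|=1$). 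A linking-form comparison analogous to Lemma~\ref{lem:lp1f-epsilon}---the coefficient $b/5$ must equal $q/5$ modulo the squares $\{1,4\}\subset(\Z/5)^\times$---forces $q\in\{2,3\}$, so $L(5,q)=L(5,b)=L(5,2)$ is the amphichiral order-$5$ lens space. Applying \eqref{eq:cw-surgery-alternate} to both surgeries and using $L(2,1)=S^3_2(U)$, $L(5,b)=S^3_{5/b}(U)$, the two equations reduce to $\lambda(Y)+\tfrac12 A(K)=0$ and $\lambda(Y)+\tfrac{b}{5}A(K)=0$, hence $\lambda(Y)=A(K)=0$.

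The core step is to prove $Y$ is invisible, in direct analogy with Proposition~\ref{prop:lp3f-invisible}. The grading-shift computation of that proposition carries over with $p=2$: for the positive-definite $2$-handle cobordism $W\colon Y\to L(2,1)$ (with $\chi(W)=\sigma(W)=1$ and $c_1(\spinc_n)^2=2(n+1)^2$), the grading shifts $(2(n+1)^2-5)/4$ of the cobordism maps disagree modulo $2$ with the $d$-invariants $\pm\tfrac14$ of $L(2,1)$, forcing $F^+_W$ to annihilate every even-graded class of $\hfp(Y)$. By exactness in the surgery triangle for the triad $(\infty,2,3)$, such classes lie in the image of $F^+_V\colon\hfp(Y_3(K))\to\hfp(Y)$; so provided $Y_3(K)$ is an L-space, they lie in $\operatorname{Im}(U^k)$ for all $k$, forcing $\hfred(Y)$ into odd gradings. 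Then $\lambda(Y)=0$ combined with Theorem~\ref{thm:cw-sum-d} yields $d(Y)\leq 0$, and reversing $W$ and gluing on the negative-definite linear plumbing bounded by $L(2,1)$ gives $d(Y)\geq 0$ via \cite[Corollary~9.8]{OzsvathGraded}. Hence $d(Y)=0$ and $Y$ is invisible.

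The chief obstacle is to establish that $Y_3(K)$ is an L-space. In the case $b=2$, the triad $(2,3,5/2)$ contains both lens-space fillings $Y_2$ and $Y_{5/2}$; a careful analysis of the associated surgery exact triangle using Lemma~\ref{lem:surgery-fplus} together with the Casson--Walker constraint $\lambda(Y)=0$ should force $Y_3(K)$ to be an L-space. In the case $b=3$, I would instead cascade through the triads $(2,5/3,7/4)$, $(2,7/4,9/5)$, $(2,9/5,11/6),\ldots$: at each step the determinants obey $2n+1=2+(2n-1)$, so the formal L-space criterion propagates and produces L-space surgeries at slopes $(2n+1)/(n+1)$ accumulating at $2$, from which the Ozsv\'ath--Szab\'o mapping-cone formula for rational surgeries should extract the L-space property of $Y_3(K)$. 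Once $Y$ is invisible, Corollary~\ref{cor:invisible-lens} applied to $Y_2(K)=L(2,1)$ yields $Y=S^3$ and $K$ the unknot, so $M$ is a solid torus.
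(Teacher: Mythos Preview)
Your setup is fine: the argument that $M$ is a homology solid torus, the normalization producing a homology sphere $Y$ with $Y_2(K)=\rpthree$ and $Y_{5/b}(K)=L(5,b)$ for $b\in\{2,3\}$, and the deduction $\lambda(Y)=A(K)=0$ are all correct and match Proposition~\ref{prop:25-lens}.  The parity computation for the cobordism $W\colon Y\to L(2,1)$ is also the right idea.  But the proof has a genuine gap at the point you yourself flag as ``the chief obstacle'': you never actually show that $Y_3(K)$ is an L-space.

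For $b=3$, your cascade goes the wrong way.  The triads $(2,5/3,7/4),(2,7/4,9/5),\ldots$ do give formal L-spaces, but at slopes $(2n+1)/(n+1)$ which accumulate at $2$ from \emph{below}, while the slope $3$ you need lies on the other side of $2$.  No amount of information at slopes in $(5/3,2)$ yields L-space-ness at slope $3$ via the mapping cone formula without knowing something strong about the knot Floer complex itself, and you give no such argument.  (Nor can you propagate the formal L-space property downward to, say, $Y_{3/2}$: in the triad $(3/2,5/3,2)$ the determinant condition singles out $Y_{5/3}$, not $Y_{3/2}$.)  For $b=2$, the triad $(2,3,5/2)$ is the right place to look, but ``a careful analysis \ldots\ should force'' is not a proof: having two L-spaces in a surgery triad does not automatically make the third one an L-space, and the Casson--Walker constraint alone is not enough either.

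What the paper does instead is first reduce $b=2$ to $b=3$ by the orientation-reversal trick (you could equally reduce $b=3$ to $b=2$), and then work entirely at slopes \emph{below} $2$.  The key technical device---which your proposal is missing---is a grading comparison with the unknot: since $Y_2=\rpthree$ and $Y_{5/3}=L(5,3)$ are specific L-spaces, the cobordism map $F^+_W\colon \hfp(L(5,3))\to\hfp(\rpthree)$ is completely determined by the grading shifts and therefore coincides (up to relabeling $\spc$ structures) with the map arising from the unknot triad $(3/2,5/3,2)$ in $S^3$.  Since $S^3_{3/2}(U)=L(3,2)$ is an L-space, this forces $Y_{3/2}$ to be an L-space as well (Proposition~\ref{prop:two-five-l-space}).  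A separate argument then pins down the $d$-invariants of $Y_{3/2}$ as those of $L(3,2)$, and one more iteration of the same comparison (now with the triad $(1,3/2,2)$) shows $Y_1$ is an L-space, hence invisible, after which Corollary~\ref{cor:invisible-lens} finishes the job.  Your outline has the right architecture, but it is precisely this grading-comparison step that does the real work, and it cannot be replaced by a formal cascade or an unspecified appeal to the mapping cone.
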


Theorem~\ref{thm:rp3-l5q} is the final result we will need in order to prove Theorem~\ref{thm:formal-l-spaces}.  In particular, this will be a critical part of classifying formal L-spaces with determinant 7.  

We begin by determining which lens spaces of order 5 can occur.

\begin{proposition} \label{prop:25-lens}
Let $M$ and $L(5,q)$ satisfy the hypotheses of Theorem~\ref{thm:rp3-l5q}.  Then $M$ admits a Dehn filling $Y$ with core $K$ such that $Y$ is a homology sphere, $Y_2(K) = \rpthree$, and $Y_{5/p}(K) = L(5,p)$ where $p$ is either 2 or 3.  Moreover, every 3-manifold of the form $Y_{a/b}(K)$ has Casson-Walker invariant equal to $\lambda(S^3_{a/b}(U))$.
\end{proposition}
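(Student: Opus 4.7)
The plan is to follow the strategy of Section~\ref{sec:consecutive}: reparametrize $\partial M$ so that the two distance-one fillings become integer and rational surgeries on a knot $K$ in a homology sphere $Y$, pin down the lens space parameter via linking forms, and extract the Casson--Walker statement from the surgery formula \eqref{eq:cw-surgery-alternate}. A short preliminary shows $H_1(M;\ZZ) \cong \ZZ$: writing $H_1(M) \cong \ZZ \oplus T$ (using half-lives-half-dies, so the free rank is exactly one), the torsion subgroup $T$ injects into each filling $H_1(\rpthree)$ and $H_1(L(5,q))$ because the image of the slope in $H_1(M)$ has nonzero free part, so $|T|$ divides $\gcd(2,5) = 1$.

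Let $\lambda$ denote the rational longitude of $M$, pick any initial meridian $\mu_0$, and orient the fillings so that $\gamma = 2\mu_0 + b\lambda$ and $\gamma' = 5\mu_0 + d\lambda$. The distance-one condition $|2d - 5b| = 1$ forces $b$ to be odd, so replacing $\mu_0$ with $\mu = \mu_0 + \tfrac{b-1}{2}\lambda$ rewrites the slopes as $\gamma = 2\mu + \lambda$ and $\gamma' = 5\mu + p\lambda$, where $p = \frac{(2d - 5b) + 5}{2} \in \{2,3\}$. Dehn filling $M$ along $\mu$ then yields a homology sphere $Y$, and since $Y$ is a homology sphere, $\lambda$ is the Seifert longitude of the core $K$. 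Hence $Y_2(K) = \rpthree$ and $Y_{5/p}(K) = L(5,q)$ for some $q$.

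To identify $q$, note that $Y_{5/p}(K)$ has linking form $p/5$ while $L(5,q)$ has linking form $q/5$, so $q \equiv u^2 p \pmod 5$ for some unit $u \in \ZZ/5\ZZ$. The nonzero squares modulo $5$ are $\{1, -1\}$, giving $q \equiv \pm p \pmod 5$, which together with $p \in \{2,3\}$ forces $q \in \{2, 3\}$. Since $2 \cdot 3 \equiv 1 \pmod 5$, we have $L(5,2) \cong L(5,3)$ orientation-preservingly, so $L(5,q) = L(5,p)$ in every case.

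For the Casson--Walker statement, apply \eqref{eq:cw-surgery-alternate} to the two known surgeries: since $\rpthree = S^3_2(U)$ and $L(5,p) = S^3_{5/p}(U)$ by convention, both left-hand sides vanish, yielding
\[ \lambda(Y) + \tfrac{1}{2}A(K) = 0 \quad\text{and}\quad \lambda(Y) + \tfrac{p}{5}A(K) = 0. \]
The coefficient $\tfrac{1}{2} - \tfrac{p}{5} = \tfrac{5-2p}{10}$ is nonzero for $p \in \{2,3\}$, so subtraction forces $A(K) = 0$ and hence $\lambda(Y) = 0$. Feeding these back into \eqref{eq:cw-surgery-alternate} gives $\lambda(Y_{a/b}(K)) = \lambda(S^3_{a/b}(U))$ for every slope $a/b$. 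The only mildly delicate point is the linking-form identification of $q$, which uses the amphichirality $L(5,2) \cong L(5,3)$; everything else is bookkeeping with the parametrization of $\partial M$ and a direct application of the Casson--Walker surgery formula.
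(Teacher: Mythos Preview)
Your proof is correct and follows essentially the same approach as the paper's: both reparametrize $\partial M$ to realize the two fillings as $2$- and $5/p$-surgery on a knot in a homology sphere, use the linking form to pin down $q\in\{2,3\}$ (equivalently, to rule out $\pm L(5,1)$), and then apply the Casson--Walker surgery formula twice to force $\lambda(Y)=A(K)=0$. One tiny quibble: half-lives-half-dies alone only gives $b_1(M)\geq 1$, with the upper bound coming from the existence of a rational homology sphere filling; modulo that, your torsion-injection argument for $H_1(M)\cong\ZZ$ is a clean alternative to the paper's observation that a nullhomologous core in $\rpthree$ would force every filling to have even-order homology.
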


\begin{proof}
We know that $M$ is a homology $S^1 \times D^2$, since otherwise the core of the $\rpthree$ filling is nullhomologous and so any other filling would have homology of even order.  We identify curves $\mu,\lambda \subset \partial M$ such that $\lambda$ generates the kernel of the natural map $H_1(\partial M) \to H_1(M)$ and $\mu\cdot\lambda=1$.  For some odd integer $n$, Dehn filling along the curve $\gamma = 2\mu + n\lambda$ produces the $\rpthree$ filling.  If we write $n=2k+1$ and set $\mu'=\mu+k\lambda$ then we have $\gamma = 2\mu'+\lambda$ and $\mu'\cdot\lambda=1$.  Thus Dehn filling along $\mu'$ produces the desired homology sphere $Y$ and core $K$ such that $Y_2(K) = \rpthree$.   The $L(5,q)$ filling of $M$ is at distance one from $2\mu'+\lambda$ by assumption, so it must be along $5\mu'+p\lambda$ where $p$ is either 2 or 3, i.e.\ $Y_{5/p}(K) = L(5,q)$.  Since $L(5,2) = L(5,3)$, we would like to see that $Y_{5/p}(K)$ is not homeomorphic to $\pm L(5,1)$.  However, since $p = 2$ or $3$, the linking form rules this out, and we can take $p = q$. 

For the second claim, we apply the Casson-Walker surgery formula \eqref{eq:cw-surgery-alternate} to $Y_2(K)$ and $Y_{5/p}(K)$:
\begin{align*}
\lambda(Y_2(K)) - \lambda(\rpthree) &= \lambda(Y) + \frac{1}{2}A(K) \\
\lambda(Y_{5/p}(K)) - \lambda(L(5,p)) &= \lambda(Y) + \frac{p}{5}A(K).
\end{align*}
The left sides of both equations are zero, and we obtain $A(K) = \lambda(Y) = 0$.  The surgery formula then says that $\lambda(Y_{a/b}(K)) = \lambda(S^3_{a/b}(U))$ for all $\frac{a}{b}$.
\end{proof}

We claim that it suffices to consider the case $Y_2(K) = \rpthree$ and $Y_{5/3}(K) = L(5,3)$ (i.e.\ $p=3$) for now.  Let's see how this implies the remaining case $p=2$ of Proposition~\ref{prop:25-lens}, i.e.\ $Y_2(K) = L(5,2)$.  Following the proof, we could instead take $\mu^* = \mu+(k+1)\lambda = \mu'+\lambda$ and define $Y^*$ to be the homology sphere attained by filling along $\mu^*$ with core $K^*$.  We have $\gamma=2\mu^*-\lambda$, so $Y^*_{-2}(K^*) = \rpthree$.  Then $5\mu'+2\lambda = 5\mu^*-3\lambda$, so that $Y^*_{-5/3}(K^*) = L(5,2) = -L(5,3)$.  If we reverse orientation, then we get $(-Y^*)_2(-K^*) = \rpthree$ and $(-Y^*)_{5/3}(-K^*) = L(5,3)$.  Thus, the $p=3$ case would imply that the exterior of $-K^*$ in $-Y^*$, which is orientation-reversing homeomorphic to $M$, is a solid torus, completing the proof.

With the preceding understood, we now suppose for the remainder of this section that $Y_2(K) = \rpthree$ and $Y_{5/3}(K) = L(5,3)$.  In what follows we will write $Y_{a/b} = Y_{a/b}(K)$ for convenience.

\begin{lemma} \label{lem:shortexactsurgery}
For all $n \geq 1$, there is a short exact sequence 
\[ 0 \to \hfp(Y_{(2n-1)/n}) \to \hfp(Y_{(2n+1)/(n+1)}) \to \hfp(\rpthree) \to 0, \]
in which the two non-trivial maps are induced by negative definite 2-handle cobordisms.
\end{lemma}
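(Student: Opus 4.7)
The plan is to recognize the three surgery slopes as forming a triad, invoke the surgery exact triangle, and then kill one of the three maps using the fact that $\rpthree$ is an L-space together with Lemma~\ref{lem:surgery-fplus}.

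First, the slopes $\frac{2n-1}{n}$, $\frac{2n+1}{n+1}$, and $2$ on $\partial M$ are pairwise at distance one: for instance, $(2n-1)(n+1) - n(2n+1) = -1$ and $(2n-1) - 2n = -1$. Therefore Theorem~\ref{thm:surgery-triangle} yields the long exact sequence \eqref{eq:2-exact-triangle}, in which the maps are induced by the corresponding 2-handle cobordisms. Since $Y_2 = \rpthree$ by Proposition~\ref{prop:25-lens}, this is already the exact triangle whose relevant portion is
\[ \hfp(\rpthree) \xrightarrow{F^+_W} \hfp(Y_{(2n-1)/n}) \to \hfp(Y_{(2n+1)/(n+1)}) \to \hfp(\rpthree), \]
so it will suffice to show that $F^+_W = 0$.

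The cobordism $W$ from $\rpthree = Y_2$ to $Y_{(2n-1)/n}$ is the one whose signature was computed in the discussion immediately following \eqref{eq:2-exact-triangle}: for $n \geq 1$ it is positive definite, so $b_2^+(W) = 1$. Since $\rpthree$ is an L-space, Lemma~\ref{lem:surgery-fplus} applies to each $\spinc \in \spc(W)$ and gives $F^+_{W,\spinc} = 0$. Summing over $\spc$ structures yields $F^+_W = 0$, which breaks the exact triangle into the desired short exact sequence. The main (and only) real point to verify is thus the vanishing of $F^+_W$, and once positive-definiteness of $W$ is in hand this is essentially automatic from Lemma~\ref{lem:surgery-fplus}.

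Finally, to see that the two surviving maps are induced by negative definite 2-handle cobordisms, note that the general signature computation recalled just before \eqref{eq:2-exact-triangle} shows that in a surgery triad of rational homology spheres exactly one of the three 2-handle cobordisms is positive definite and the other two are negative definite; since $W$ is the positive definite one, the cobordisms realizing $\hfp(Y_{(2n-1)/n}) \to \hfp(Y_{(2n+1)/(n+1)})$ and $\hfp(Y_{(2n+1)/(n+1)}) \to \hfp(\rpthree)$ are both negative definite, as claimed.
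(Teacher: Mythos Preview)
Your proof is correct and follows essentially the same approach as the paper's: both arguments invoke the surgery exact triangle \eqref{eq:2-exact-triangle}, observe that the cobordism $W$ from $\rpthree = Y_2$ to $Y_{(2n-1)/n}$ is positive definite for $n\geq 1$ by the discussion preceding Lemma~\ref{lem:d-2p-Z}, and then apply Lemma~\ref{lem:surgery-fplus} to conclude $F^+_W = 0$. Your additional remarks---verifying the pairwise distances explicitly and invoking the ``one positive, two negative'' count of signatures to identify the remaining maps as negative definite---are correct and make the argument slightly more self-contained, but the strategy is identical.
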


\begin{proof}
These invariants fit into the surgery exact triangle \eqref{eq:2-exact-triangle}, so it will suffice to show that if $W$ is the 2-handle cobordism from $Y_2 = \rpthree$ to $Y_{(2n-1)/n}$, corresponding to attaching a handle along an $n$-framed meridian of $K$ (after a 2-framed surgery on $K\subset Y$), then the induced map $F^+_W$ is zero.  By the discussion above Lemma~\ref{lem:d-2p-Z}, we see that $W$ is positive definite for $n \geq 1$.  Since $\rpthree$ is an L-space, $F^+_{W,\spinc} = 0$ for all $\spinc \in \spc(W)$ by Lemma~\ref{lem:surgery-fplus}.
\end{proof}

\begin{proposition}\label{prop:two-five-l-space}
If $Y_2 = \rpthree$ and $Y_{5/3} = L(5,3)$, then $Y_{3/2}$ is an L-space.
\end{proposition}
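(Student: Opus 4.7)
The plan is to adapt the proof strategy of Proposition~\ref{prop:lp3f-invisible} to the present setting. The starting point is the short exact sequence obtained from Lemma~\ref{lem:shortexactsurgery} with $n=2$:
\[
0 \to \hfp(Y_{3/2}) \xrightarrow{F^+_U} \hfp(L(5,3)) \xrightarrow{F^+_V} \hfp(\rpthree) \to 0,
\]
where $U$ and $V$ are negative-definite 2-handle cobordisms, and $\hfp(L(5,3)) \cong \tower^5$ and $\hfp(\rpthree) \cong \tower^2$ since both lens spaces are L-spaces. Because $|H_1(Y_{3/2};\Z)|=3$, it is enough to show that $\hfred(Y_{3/2})=0$.

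The first substantive step would be a parity analysis of the cobordism map $F^+_V$. Labelling $\spc(V)=\{\spinc_n\}_{n\in\Z}$ via a generator of $H^2(V;\Z)\cong\Z$ dual to a generator $[\Sigma_V]$ of $H_2(V;\Z)$ (whose self-intersection is computable from the framings defining $V$), and applying Theorem~\ref{thm:os-gradings}, one obtains the grading shift $\deg F^+_{V,\spinc_n} = (c_1(\spinc_n)^2+1)/4$ as an explicit function of $n$. Comparing this shift modulo $2$ against the parities of the $d$-invariants of $L(5,3)$ and $\rpthree$ should show, by a bookkeeping calculation analogous to the one in Proposition~\ref{prop:lp3f-invisible}, that $F^+_{V,\spinc_n}$ vanishes on elements of $\hfp(L(5,3))$ in a particular parity of gradings. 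Such elements then lie in $\ker F^+_V = \im F^+_U$, and the divisibility of $\hfp(L(5,3))$ shows that the corresponding elements of $\hfp(Y_{3/2})$ are in $U^k\hfp(Y_{3/2})$ for every $k\ge 0$. This forces $\hfred(Y_{3/2})$ to be concentrated in the opposite parity of gradings.

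The second step is the Casson--Walker computation. By Proposition~\ref{prop:25-lens}, $\lambda(Y_{3/2})=\lambda(L(3,2))$, and applying Theorem~\ref{thm:cw-sum-d} to both manifolds (using that $L(3,2)$ has no reduced part) yields
\[
\chi(\hfred(Y_{3/2}))=\tfrac{1}{2}\Big(\textstyle\sum_\spint d(Y_{3/2},\spint) - \textstyle\sum_\spint d(L(3,2),\spint)\Big).
\]
Because $Y_{3/2}$ and $L(3,2)$ share the linking form $2/3$, Lemma~\ref{lem:d-2p-Z} ensures the right-hand side is an integer. A $d$-invariant bound from a suitable negative-definite 4-manifold with boundary $Y_{3/2}$—constructed by gluing the reversed positive-definite cobordism $\rpthree\to Y_{3/2}$ to a negative-definite plumbing bounding $\rpthree$, exactly as in the final step of the proof of Proposition~\ref{prop:lp3f-invisible}—combined with the parity concentration from the first step will pin down $\chi(\hfred(Y_{3/2}))=0$. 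Since $\hfred(Y_{3/2})$ is supported in a single parity, this forces $\hfred(Y_{3/2})=0$, and hence $Y_{3/2}$ is an L-space.

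The main obstacle is the parity bookkeeping in the first step: explicitly identifying $[\Sigma_V]^2$ for the cobordism $V$ (as a function of the surgery framings defining the triangle $(Y_{3/2}, L(5,3), \rpthree)$), then tracking the resulting grading shifts of the $F^+_{V,\spinc_n}$ and matching parities against the $d$-invariants of $L(5,3)$ and $\rpthree$. This is a direct analogue of the key calculation in the proof of Proposition~\ref{prop:lp3f-invisible}.
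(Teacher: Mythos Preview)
Your first step does not work as stated, and the gap is structural rather than bookkeeping. In Proposition~\ref{prop:lp3f-invisible} the parity argument succeeds because the unknown manifold $Y$ sits in the \emph{middle} of the triangle: the outgoing map $F^+_W$ (positive definite) is shown to kill even-graded elements, which are therefore in $\ker F^+_W = \img F^+_V$; since $F^+_V$ has an L-space as its \emph{domain}, infinite $U$-divisibility pushes forward along $F^+_V$ into $\hfp(Y)$. In your setup the roles are different: $Y_{3/2}$ sits at the \emph{start} of the short exact sequence, the only map into $\hfp(Y_{3/2})$ in the triangle is the zero map from $\hfp(\rpthree)$, and your map $F^+_U$ has the L-space $L(5,3)$ as its \emph{codomain}. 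The sentence ``the divisibility of $\hfp(L(5,3))$ shows that the corresponding elements of $\hfp(Y_{3/2})$ are in $U^k\hfp(Y_{3/2})$'' is the wrong direction: divisibility in a codomain does not lift along an injective $\F[U]$-map to the domain. Indeed, \emph{every} element of $\hfp(L(5,3))$ is infinitely $U$-divisible, so your parity analysis of $F^+_V$ singles out nothing. There is thus no mechanism here to force $\hfred(Y_{3/2})$ into a single parity, and without that your second step cannot close the argument.

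The paper's proof circumvents this by avoiding parity altogether. Because $L(5,3)$ and $\rpthree$ are both L-spaces and the cobordism from $L(5,3)$ to $\rpthree$ is negative definite, each $F^+_{V,\spinc}$ (your notation) is a surjective $U$-equivariant map between towers and is therefore completely determined by its grading shift $\frac{1}{4}(c_1(\spinc)^2+1)$. A mod-$2$ computation shows that these shifts, together with the $d$-invariants on either end, pin down $F^+_V$ uniquely as a map of $\F[U]$-modules (up to a reordering of $\spc$ structures that preserves $d$-invariants). The same data arise for the analogous cobordism coming from the unknot in $S^3$, so $F^+_V$ agrees with that model map; in the model, the kernel equals the image of $\hfp(L(3,2))$, on which $U$ acts surjectively. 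Hence $U$ acts surjectively on $\ker F^+_V = \img F^+_U$, and injectivity of $F^+_U$ transfers this to $\hfp(Y_{3/2})$. The essential idea you are missing is this comparison with the unknot surgery model.
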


\begin{proof}
The case $n=2$ of Lemma~\ref{lem:shortexactsurgery} produces a short exact sequence
\[ 0 \to \hfp(Y_{3/2}) \xrightarrow{F^+_V} \hfp(L(5,3)) \xrightarrow{F^+_W} \hfp(\rpthree) \to 0 \]
corresponding to surgeries on $K \subset Y$ with slope $\frac{3}{2}$, $\frac{5}{3}$, and $\frac{2}{1}$ respectively, and $V$ and $W$ are each negative definite 2-handle cobordisms.  In particular, since $W$ is negative definite and $\rpthree$ is an L-space, Lemma~\ref{lem:surgery-fplus} says that each $F^+_{W,\spinc}$ must be surjective.  In fact, since both $L(5,3)$ and $\rpthree$ are L-spaces, $F^+_{W,\spinc}$ is a surjective, $U$-equivariant map of the form
\[ F^+_{W,\spinc}: \tower_{d(L(5,3), \spinc|_{L(5,3)})} \to \tower_{d(\rpthree, \spinc|_{\rpthree})}, \]
so it is determined entirely by its grading $\gr(F^+_{W,\spinc}) = \frac{1}{4}\big(c_1(\spinc)^2 - 2\chi(W) - 3\sigma(W)\big)$, and moreover the $\spc$ structures on either side satisfy 
\begin{equation} \label{eq:drp3-dl53}
d(\rpthree,\spinc|_{\rpthree}) - d(L(5,3),\spinc|_{\rpthree}) \equiv \gr(F^+_{W,\spinc}) = \frac{c_1(\spinc)^2 + 1}{4} \pmod{2}.
\end{equation}

The $\spc$ structures on $L(5,3)$ have $d$-invariants $-\frac{2}{5},-\frac{2}{5},0,\frac{2}{5},\frac{2}{5}$ in some order, and those on $\rpthree$ have $d$-invariants $-\frac{1}{4}$ and $\frac{1}{4}$.  It is not hard to check that we can write $c_1(\spinc)^2 = -\frac{k^2}{10}$ for some integer $k$ whose parity is fixed.  If $d(\rpthree, \spinc|_{\rpthree}) = \frac{1}{4}$ and $d(L(5,3),\spinc|_{L(5,3)}) = -\frac{2}{5}, 0, \frac{2}{5}$ then $k^2 \equiv 64, 0, 16 \pmod{80}$ respectively by \eqref{eq:drp3-dl53}, hence $k\pmod{20}$ belongs to $\{8,12\}$, $\{0\}$, or $\{4,16\}$ respectively.  If instead $d(\rpthree,\spinc|_{\rpthree}) = -\frac{1}{4}$ then we have $k^2 \equiv 4, 20, 36 \pmod{80}$ and so $k \pmod{20}$ belongs to $\{2,18\}$, $\{10\}$, or $\{6,14\}$ respectively.  Since the restrictions of $\spinc$ to $L(5,3)$ and to $\rpthree$ are determined by $k\pmod{10}$ and $k\pmod{4}$, the even residue classes mod 20 are in bijection with elements of $\spc(L(5,3)) \times \spc(\rpthree)$, and although this bijection is not quite unique the pair of $d$-invariants associated to each residue class is uniquely determined.  In particular, this says that $F^+_W$ is uniquely determined as a map
\[ \tower_{0} \oplus (\tower_{-2/5})^{\oplus 2} \oplus (\tower_{2/5})^{\oplus 2} \to \tower_{1/4} \oplus \tower_{-1/4} \]
of $\F[U]$-modules, up to possibly reordering the $\tower_{-2/5}$ summands and the $\tower_{2/5}$ summands.

The above argument uses only the fact that $W$ is a negative definite 2-handle cobordism, so it applies equally well if we replace $K \subset Y$ with the unknot $U \subset S^3$ to get another cobordism $W'$ from $L(5,3)$ to $\rpthree$.  There is a natural identification $\spc(W) \cong \spc(W')$ which preserves these gradings and the $d$-invariants of the restrictions to either boundary component, so up to possibly reordering $\spc(L(5,3))$ in a way which preserves the corresponding $d$-invariants, we conclude that the maps $F^+_W$ and $F^+_{W'}$ are equal.

Now $F^+_{W'}$ fits into a surgery exact sequence of $\F[U]$-modules of the form
\[ 0 \to \hfp(L(3,2)) \xrightarrow{F^+_{V'}} \hfp(L(5,3)) \xrightarrow{F^+_{W'}} \hfp(\rpthree) \to 0. \]
Since $L(3,2)$ is an L-space, the $U$ action on $\hfp(L(3,2))$ is surjective and it follows that $\img(F^+_{V'}) \subset U\cdot \img(F^+_{V'})$.  But $\img(F^+_{V'}) = \ker(F^+_{W'})$ by exactness, and this is equal to $\ker(F^+_W) = \img(F^+_V)$ since $F^+_W = F^+_{W'}$, so $\img(F^+_V) \subset U\cdot \img(F^+_V)$ as well.  Given any element $a \in \hfp(Y_{3/2})$ we have $F^+_V(a) = U \cdot F^+_V(b)$ for some $b$, and $F^+_V$ is injective so $a=Ub$.  Thus $U: \hfp(Y_{3/2}) \to \hfp(Y_{3/2})$ is surjective, and we conclude that $Y_{3/2}$ is an L-space as well.
\end{proof}

It remains to compute the $d$-invariants of $Y_{3/2}$.  By Theorem~\ref{thm:cw-sum-d}, their sum satisfies
\[ 3\lambda(Y_{3/2}) = \sum_{\spint \in \spc(Y_{3/2})} \left(\chi(\hfred(Y_{3/2},\spint)) - \frac{1}{2}d(Y_{3/2},\spint)\right), \]
and from Proposition~\ref{prop:25-lens} we know that the left side equals $3\lambda(L(3,2))$; this is $\frac{1}{12}$ by another application of Theorem~\ref{thm:cw-sum-d} to $L(3,2)$, an L-space whose $d$-invariants are $\{-\frac{1}{2},\frac{1}{6},\frac{1}{6}\}$.  By Proposition~\ref{prop:two-five-l-space}, $Y_{3/2}$ is also an L-space, so we have
\[ \sum_{\spint \in \spc(Y_{3/2})} d(Y_{3/2},\spint) = -\frac{1}{6}. \]
Let $\spint_0,\spint_1,\spint_2$ denote the $\spc$ structures on $Y_{3/2}$, with $\spint_0$ the unique spin structure and $\spint_2 = \overline{\spint_1}$, and let $d_i = d(Y_{3/2},\spint_i)$; we note that $d_1 = d_2$.

\begin{proposition}
The $d$-invariants of $Y_{3/2}$ are $d_0 = -\frac{1}{2}$ and $d_1 = d_2 = \frac{1}{6}$.  In other words, $Y_{3/2}$ is a Heegaard Floer $L(3,2)$.
\end{proposition}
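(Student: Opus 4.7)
From the preceding paragraph we already have $d_0 + 2d_1 = -\tfrac{1}{6}$. The plan is to combine this with the linking form constraint and then with a graded-structure comparison coming from the short exact sequence of Lemma~\ref{lem:shortexactsurgery}, leveraging the unknot-surgery analog exploited inside the proof of Proposition~\ref{prop:two-five-l-space}.

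First, I would reduce to a one-parameter family. The linking form of $Y_{3/2}$ is $\tfrac{2}{3}$, matching that of $L(3,2)$, so Lemma~\ref{lem:d-2p-Z} forces the multiset $\{d_0, d_1, d_2\}$ to agree with $\{-\tfrac{1}{2}, \tfrac{1}{6}, \tfrac{1}{6}\}$ modulo $2$. Combining this with $d_0 + 2 d_1 = -\tfrac{1}{6}$ and $d_1 = d_2$ pins the possibilities down to $(d_0, d_1, d_2) = \bigl(-\tfrac{1}{2} - 4k,\, \tfrac{1}{6} + 2k,\, \tfrac{1}{6} + 2k\bigr)$ for some integer $k$; it remains to show $k = 0$.

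To eliminate $k$, I would invoke the equality $F^+_W = F^+_{W'}$ established in the proof of Proposition~\ref{prop:two-five-l-space}, where $W' \colon L(5,3)\to\rpthree$ is the analog of $W$ with $K \subset Y$ replaced by the unknot $U \subset S^3$. By exactness of the sequence in Lemma~\ref{lem:shortexactsurgery} with $n=2$, $\ker(F^+_W) = \ker(F^+_{W'})$, so $\img(F^+_V) = \img(F^+_{V'})$ as graded $\F[U]$-submodules of $\hfp(L(5,3))$, where $V' \colon L(3,2) \to L(5,3)$ is the unknot analog of $V$. Since $V$ and $V'$ are 2-handle cobordisms whose intersection form is determined by the triad of slopes $\{3/2, 5/3, 2\}$ and so is independent of the knot, I can fix a common parameterization $\{\spinc_n\}_{n \in \ZZ}$ of their $\spc$ structures under which the grading shift $\tfrac{c_1(\spinc_n)^2 + 1}{4}$ and the restriction to $L(5,3)$ (depending only on $n \pmod 5$) agree. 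For each $\psi \in \spc(L(5,3))$, $\img(F^+_V) \cap \hfp(L(5,3), \psi)$ is a subtower of $\tower_{d(L(5,3), \psi)}$ whose bottom grading equals $\min_{n}\bigl[d(Y_{3/2}, \spinc_n|_{Y_{3/2}}) + \tfrac{c_1(\spinc_n)^2+1}{4}\bigr]$, taken over $n$ with $\spinc_n|_{L(5,3)} = \psi$, with the identical formula holding for $\img(F^+_{V'})$ upon substituting $d(L(3,2), \cdot)$. Equating these minima across the five $\spc$ structures of $L(5,3)$ yields a system of equations pinning down each $d_i$ to match the corresponding $d$-invariant of $L(3,2)$ under the canonical bijection $\spc(Y_{3/2}) \leftrightarrow \spc(L(3,2))$ induced by matching $n \pmod 3$, forcing $k = 0$.

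The main obstacle is carrying out the combinatorial minimization explicitly. I would need to work out the intersection form of $V$ as a rank-$1$ negative-definite form (determined from the triad by a continued-fraction calculation), whence $c_1(\spinc_n)^2$ becomes an explicit quadratic in $n$; because $c_1^2 \to -\infty$ quadratically, the minima are realized at finitely many small $|n|$, and verifying that the resulting five equations are non-degenerate enough to determine each of $d_0, d_1, d_2$ individually (and not merely their sum) will require some care. The mod-$15$ combinatorics pair the three $\spc$ structures on $Y_{3/2}$ with the five on $L(5,3)$ asymmetrically, and it is likely that the spin and non-spin sectors will need to be analyzed separately, matching the parity arguments already used for the analogous calculation in Proposition~\ref{prop:two-five-l-space}.
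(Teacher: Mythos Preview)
Your first paragraph is fine and matches the paper: the linking-form plus sum constraints reduce the problem to showing $k=0$ in the family $(d_0,d_1,d_2)=(-\tfrac12-4k,\ \tfrac16+2k,\ \tfrac16+2k)$.

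The second step has a genuine gap. Your formula for the bottom grading of $\img(F^+_V)\cap\hfp(L(5,3),\psi)$ cannot be right. Since $V$ is negative definite, $c_1(\spinc_n)^2\to-\infty$ as $|n|\to\infty$, so the minimum $\min_n\bigl[d(Y_{3/2},\spinc_n|_{Y_{3/2}})+\tfrac{c_1(\spinc_n)^2+1}{4}\bigr]$ is $-\infty$; it is not a grading of anything. Even locally, Lemma~\ref{lem:surgery-fplus} applies to $V$ (negative definite, L-space target), so each individual $F^+_{V,\spinc_n}$ is already surjective onto its target tower; the reason the total $F^+_V$ is not surjective is cancellation among the summands over $\F$, which your minimum formula does not see. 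More fundamentally, $\img(F^+_V)=\ker(F^+_W)$ is determined entirely by $F^+_W$, and you have already argued $F^+_W=F^+_{W'}$; so the image you are computing carries no information about the $d_i$ at all. Equating it with $\img(F^+_{V'})$ is a tautology, not a constraint on $k$.

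The paper eliminates $k$ by a much shorter device that you are not using. The positive definite 2-handle cobordism $X\colon \rpthree\to Y_{3/2}$ in the triangle, when orientation-reversed, gives a negative definite cobordism $-X\colon Y_{3/2}\to\rpthree$. For each $\spinc$ on $-X$, surjectivity of $F^+_{-X,\spinc}$ (Lemma~\ref{lem:surgery-fplus}) together with $c_1(\spinc)^2\le 0$ yields
\[
d(\rpthree,\spinc|_{\rpthree}) - d(Y_{3/2},\spinc|_{Y_{3/2}}) \le \tfrac{1}{4},
\]
hence $d(Y_{3/2},\spint)\ge -\tfrac12$ for every $\spint$. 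Combined with your mod-$2$ and sum constraints this forces $k=0$ immediately: the only nonnegative integer solution of $2m+4n=0$ is $m=n=0$.
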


\begin{proof}
By reversing the orientation of the 2-handle attachment $X$ from $\rpthree$ to $Y_{3/2}$, we obtain a negative definite cobordism $-X$ from $Y_{3/2}$ to $\rpthree$.  For any Spin$^c$ structure $\spinc$ on $-X$, we see from Lemma~\ref{lem:surgery-fplus} that $F^+_{-X,\spinc}: HF^+(Y_{3/2},\spinc|_{Y_{3/2}}) \to HF^+(\rpthree, \spinc|_{\rpthree})$ is surjective.  Therefore, since $c_1(\spinc)^2 \leq 0$ it follows that 
\[
d(\rpthree, \spinc|_{\rpthree}) - d(Y_{3/2},\spinc|_{Y_{3/2}})  \leq \frac{c_1(\spinc)^2 - 2\chi(-X) - 3\sigma(-X)}{4} \leq \frac{1}{4}.    
\]
Since $d(\rpthree, \spinc|_{\rpthree}) = -\frac{1}{4}$ or $\frac{1}{4}$, we see that $d(Y_{3/2},\spinc|_{Y_{3/2}}) \geq -\frac{1}{2}$ for each $\spinc$.  It is straightforward to check that each $\spc$ structure on $Y_{3/2}$ extends over $-X$, so we see that $d(Y_{3/2}, \spint) \geq -\frac{1}{2}$ for all $\spint$ on $Y_{3/2}$.  

By Lemma~\ref{lem:d-2p-Z}, the set of $d$-invariants for $Y_{3/2}$ agrees mod 2 with that of $L(3,2)$, since the linking form of $Y_{3/2}$ is $\frac{2}{3}$.  However, the $d$-invariants of $L(3,2)$ are $\{-\frac{1}{2}, \frac{1}{6}, \frac{1}{6}\}$.  We can conclude that the three $d$-invariants of $Y_{3/2}$ are each at least $-\frac{1}{2}$, agree with the set $\{ -\frac{1}{2}, \frac{1}{6}, \frac{1}{6} \}$ mod 2, and sum to $-\frac{1}{6}$.  This implies that the $d$-invariants of $Y_{3/2}$ are in fact as claimed. 
\end{proof}

\begin{proof}[Proof of Theorem~\ref{thm:rp3-l5q}]
Now we know that $Y_2 = \rpthree$ and that $Y_{3/2}$ is a Heegaard Floer $L(3,2)$.  We first show that $Y_1(K)$ is an invisible 3-manifold.  Indeed, we have a short exact sequence
\[ 0 \to \hfp(Y_1) \to \hfp(Y_{3/2}) \to \hfp(\rpthree) \to 0 \]
by Lemma~\ref{lem:shortexactsurgery}, with the middle two maps coming from negative definite 2-handle cobordisms.  Arguing as in Proposition~\ref{prop:two-five-l-space}, since $Y_{3/2}$ is an L-space with the same $d$-invariants as $L(3,2)$, the grading of each $F^+_{W,\spinc}: \hfp(Y_{3/2},\spinc|_{Y_{3/2}}) \to \hfp(\rpthree,\spinc|_{\rpthree})$ mod 2 uniquely determines the $d$-invariants of $\spinc|_{Y_{3/2}}$ and $\spinc|_{\rpthree}$.  Thus the cobordism map $F^+_W: \hfp(Y_{3/2}) \to \hfp(\rpthree)$ is once again determined entirely by the gradings in each $\spc$ structure, in exactly the same way as the map $\hfp(L(3,2)) \to \hfp(\rpthree)$ arising from the exact triangle for surgeries on the unknot $U\subset S^3$:
\[
\ldots \to \hfp(S^3_1(U)) \to \hfp(S^3_{3/2}(U)) \to \hfp(S^3_2(U)) \to \ldots
\] 
Since $1$-surgery on $U \subset S^3$ is again an L-space (namely $S^3$), we conclude exactly as in the case of $Y_{3/2}$ in Proposition~\ref{prop:two-five-l-space} that $Y_1$ is also an L-space.  Proposition~\ref{prop:25-lens} says that $\lambda(Y_1) = 0$, so $d(Y_1) = 2(\chi(\hfred(Y_1)) - \lambda(Y_1)) = 0$ and thus $Y_1$ is invisible.

Let $Y' = Y_1(K)$ be the result of Dehn filling the exterior $M$ of $K$ along the curve $\mu+\lambda$, with core $K'$.  Then since $2\mu+\lambda = 2(\mu+\lambda) - \lambda$ we have $\rpthree = Y_2(K) = Y'_{-2}(K')$, so $Y'$ is an invisible 3-manifold satisfying $Y'_{-2}(K') = S^3_{-2}(U)$.  Corollary~\ref{cor:invisible-lens} now says that $Y'=S^3$ and $K'=U$, and since the exterior of $K' \subset Y'$ is also $M$ we conclude that $M$ is a solid torus, as desired.
\end{proof}

%!TEX root = smalldeterminant.tex

\section{The proof of Theorem~\ref{thm:formal-l-spaces}}\label{sec:classification}

We divide the proof of Theorem~\ref{thm:formal-l-spaces} into several parts, based on the value of the determinant of the three-manifold $Y$ in question.  In each case we identify $Y$ as part of a triad $(Y,Y_0,Y_1)$ with $\det(Y) = \det(Y_0) + \det(Y_1)$, and use the possible values of $Y_0$ and $Y_1$ to determine the possible values of $Y$.  Since the order of the elements in a triad does not matter, we may assume without loss of generality that $\det(Y_0) \leq \det(Y_1)$ in each of the arguments below.  We will label each case ``$a+b=c$'' to indicate that $\det(Y_0)=a$, $\det(Y_1)=b$, and $\det(Y)=c$.

\subsection{The case $1+n=(n+1)$}

Since the cases $1+3=4$, $1+4=5$, $1+5=6$, and $1+6=7$ are all essentially the same, we combine them here to avoid repetition.

\begin{proposition} \label{prop:1-plus-n}
Suppose that $Y$ is a formal L-space with $\det(Y)=n+1$ for some $n \leq 6$, and that all formal L-spaces with determinant $n$ are connected sums of lens spaces.  If $(Y,Y_0,Y_1)$ is a triad of formal L-spaces with $\det(Y_0)=1$ and $\det(Y_1)=n$, then $Y$ is also a connected sum of lens spaces.
\end{proposition}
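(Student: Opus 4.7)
The plan is to exploit the triad's rigidity. Because $\det(Y_0) = 1$, Theorem~\ref{thm:greene-classification} forces $Y_0 = S^3$, so $M$ is the exterior of a knot $K \subset S^3$ with meridian $\mu$ as one of the three triad slopes. The other two slopes are each at distance one from $\mu$, hence are integer surgery slopes, and their mutual distance one makes them consecutive integers. After possibly replacing $K$ by its mirror, I may assume $Y_1 = S^3_n(K)$ and $Y = S^3_{n+1}(K)$.

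The hypothesis that $Y_1$ is a connected sum of lens spaces, combined with $H_1(Y_1) \cong \ZZ/n\ZZ$ cyclic, severely restricts the shape of $Y_1$: the lens-space summands must have pairwise coprime orders, so $Y_1$ is either a single lens space $L(n,q)$, or (only when $n = 6$) the connected sum $L(2,1) \# \epsilon L(3,1)$ for some sign $\epsilon = \pm 1$.

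In the single lens space case, Theorem~\ref{thm:kmos} applies since $|n| \leq 6 < 8$, forcing $K$ to be either the unknot or a trefoil. The unknot case gives $Y = L(n+1,1)$ immediately. Moser's classification of torus-knot surgeries shows that lens-space surgeries on $T_{\pm 2, 3}$ occur only at slopes $\pm 5$ and $\pm 7$, so in the trefoil case we must have $n = 5$ and (after the mirror normalization above) $K = T_{2,3}$; then $Y = S^3_6(T_{2,3}) = L(2,1) \# L(3,2)$ is a connected sum of two lens spaces.

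The harder subcase is $n = 6$ with $Y_1$ reducible. Here $S^3_6(K)$ is a nontrivial connected sum of two lens spaces, and I would invoke a classification of reducible L-space surgeries (Greene's theorem, building on the cabling conjecture) to conclude $K = T_{2,3}$, so that $Y = S^3_7(T_{2,3}) = L(7,4)$ is a lens space. This reducible subcase is the main obstacle, since it requires external input beyond KMOS. In principle one could attack it directly via the Casson--Walker invariant and the $d$-invariants of $Y_1$, in the spirit of Proposition~\ref{prop:lp3f-invisible}, but the direct Heegaard Floer route is considerably more involved.
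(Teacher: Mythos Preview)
Your proof is correct and follows essentially the same route as the paper: reduce to consecutive integer surgeries on a knot in $S^3$, split into the cases where $Y_1$ is a single lens space (handled by KMOS plus Moser) or, for $n=6$, a connected sum $\rpthree\#\pm L(3,1)$ (handled by Greene's cabling theorem, which is exactly the paper's \cite[Theorem~1.5]{GreeneCabling}). One small imprecision: replacing $K$ by its mirror sends $S^3_{-n}(K)$ to $-S^3_n(\overline{K})$, so your normalization really replaces $(Y_1,Y)$ by $(-Y_1,-Y)$; this is harmless since both the hypothesis and the conclusion are invariant under orientation reversal, but it should be said.
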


\begin{proof}
By assumption we have $Y_0 = S^3$, so $Y$ and $Y_1$ are the results of surgeries with consecutive integer slopes on some knot $J_0 \subset S^3$.  Since $H_1(Y_1)$ is cyclic we cannot have $Y_1 = \rpthree \# \rpthree$, so either $Y_1$ is a lens space of order $n \leq 6$ or it is $\pm L(3,1) \# \rpthree$.

If $Y_1$ is a lens space, then Theorem~\ref{thm:kmos} says that $J_0$ is either the unknot or a trefoil.  If it is the unknot, then certainly $Y$ is a lens space as well.  Otherwise $Y_1$ is a lens space resulting from a $\pm n$-surgery on a trefoil, and $n\leq 6$, so $Y_1$ is either $+5$-surgery on the right handed trefoil $T_{2,3}$ or $-5$-surgery on the left handed trefoil $-T_{2,3}$ \cite{Moser}.  Then $Y$ is $\pm 6$-surgery on $\pm T_{2,3}$, and this is $\pm L(3,1) \# \rpthree$.

If instead we have $Y_1 = \pm L(3,1) \# \rpthree$, then \cite[Theorem~1.5]{GreeneCabling} says that $J_0$ must be $\pm 6$-surgery on $\pm T_{2,3}$, so $Y$ is the result of $\pm 7$-surgery on $\pm T_{2,3}$, and this is $\pm L(7,4)$.
\end{proof}

\begin{remark}
The same argument applies for $1+7=8$ to show that $J_0$ is either an unknot or a trefoil; in this case $Y$ would be either a lens space or the small Seifert fibered space $S^3_{\pm 8}(\pm T_{2,3})$.
\end{remark}

\subsection{Formal L-spaces of determinant 4}
\begin{theorem}\label{thm:qa4}
Let $Y$ be a formal L-space with $\det(Y)=4$.  Then $Y$ is either $\pm L(4,1)$ or $\rpthree\#\rpthree$.
\end{theorem}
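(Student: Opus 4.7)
By Definition~\ref{def:formal-l-space}, $Y$ belongs to a triad $(Y, Y_0, Y_1)$ with $Y_0, Y_1 \in \mathcal{F}$ and $\det(Y_0) + \det(Y_1) = 4$. Up to swapping, this forces $(\det Y_0, \det Y_1) \in \{(1,3), (2,2)\}$, and Theorem~\ref{thm:greene-classification} pins these down exactly: either $Y_0 = S^3$ with $Y_1 = \pm L(3,1)$, or $Y_0 = Y_1 = \rpthree$. So the plan is simply to handle each triad type.

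The $(1,3)$ case is handled directly by Proposition~\ref{prop:1-plus-n} applied with $n = 3$; its inductive hypothesis that all determinant-$3$ formal L-spaces are connected sums of lens spaces is just Greene's classification. The trefoil branch inside that proposition requires $n = 5$ (by Moser's list of lens space surgeries on the trefoil), so it does not arise here, and the conclusion is $Y \cong \pm L(4,1)$.

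For the $(2,2)$ case the triad provides a 3-manifold $M$ with torus boundary carrying three pairwise distance-one slopes, two of which fill to $\rpthree$. View $M$ as the exterior of the core $K_0 \subset \rpthree$ of the first $\rpthree$-filling; then the second $\rpthree$-filling is a non-trivial Dehn surgery on $K_0$ at distance one from the meridian. Since $\rpthree$ is an L-space with $|H_1(\rpthree)| = 2$ prime, Corollary~\ref{cor:lens-cosmetic} forces $K_0$ to be the unknot in $\rpthree$. Hence $M \cong \rpthree \# (S^1 \times D^2)$, and in the natural meridian--longitude basis $(\mu, \lambda)$ on $\partial M$, Dehn filling along $a\mu + b\lambda$ produces $\rpthree \# L(a,b)$. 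The two $\rpthree$-fillings must therefore be $\mu$ and (after a sign choice) $\mu + \lambda$, the only integer slopes at distance one from $\mu$ that yield $\rpthree \# S^3 = \rpthree$.

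A short calculation in $H_1(\partial M)$ now lists the remaining candidates for the third slope $\gamma = a\mu + b\lambda$ satisfying $\Delta(\gamma, \mu) = \Delta(\gamma, \mu + \lambda) = 1$: up to orientation reversal, $\gamma \in \{\lambda,\, 2\mu + \lambda\}$. Filling along $\lambda$ gives $\rpthree \# (S^1 \times S^2)$, which has positive first Betti number and so is not even a rational homology sphere, let alone a formal L-space. Filling along $2\mu + \lambda$ gives $\rpthree \# L(2,1) = \rpthree \# \rpthree$. The only real obstacle is the $(2,2)$ case, where the crucial input is Corollary~\ref{cor:lens-cosmetic} (which itself rests on Gainullin's Dehn surgery characterization of the unknot in L-spaces); once $K_0$ is identified as the unknot, the slope enumeration is elementary and the theorem follows.
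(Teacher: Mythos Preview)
Your proof is correct and follows essentially the same approach as the paper's: split into the $1+3$ and $2+2$ cases, dispatch the first via Proposition~\ref{prop:1-plus-n}, and in the second use Corollary~\ref{cor:lens-cosmetic} to force $K_0$ to be the unknot in $\rpthree$. The only cosmetic difference is that after identifying $K_0$ as the unknot, the paper simply observes that $\det(Y)=4$ forces $H_1(Y)\cong\Z/2\Z\oplus\Z/2\Z$ and hence $Y=\rpthree\#\rpthree$, whereas you carry out an explicit enumeration of the third slope; both arrive at the same conclusion.
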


\begin{proof}
We suppose that $Y$ belongs to a triad $(Y,Y_0,Y_1)$ with $\det(Y_0)+\det(Y_1)=4$ and $\det(Y_0) \leq \det(Y_1)$ as above.  By Proposition~\ref{prop:1-plus-n}, we need only consider the case $2+2=4$, in which $Y_0=Y_1=\rpthree$ by Theorem~\ref{thm:greene-classification}.  We then have a knot $J_0 \subset Y_0 = \rpthree$ with a non-trivial $Y_1 = \rpthree$ surgery.  Since the nontrivial filling has to be distance one from the trivial filling, Corollary~\ref{cor:lens-cosmetic} says that $J_0$ is an unknot in $\rpthree$.  Since $\det(Y) = 4$, we have $H_1(Y) = \Z/2\Z \oplus \Z/2\Z$.  It follows that $Y = \rpthree \# \rpthree$, and this completes the proof.  
\end{proof}

\subsection{Formal L-spaces of determinant 5}
\begin{theorem}\label{thm:qa5}
Let $Y$ be a formal L-space with $\det(Y)=5$.  Then $Y$ is a lens space.
\end{theorem}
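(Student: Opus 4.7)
The plan is to exhaust the possible triads $(Y,Y_0,Y_1)$ with $\det(Y_0)+\det(Y_1)=5$ and $\det(Y_0)\leq\det(Y_1)$, which leaves exactly two cases: $1+4=5$ and $2+3=5$.

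For the case $1+4=5$, I would directly invoke Proposition~\ref{prop:1-plus-n} with $n=4$; its hypothesis is met by Theorem~\ref{thm:qa4}, which classifies all determinant-$4$ formal L-spaces as $\pm L(4,1)$ or $\rpthree\#\rpthree$, each of which is a connected sum of lens spaces. The proposition then forces $Y$ to be a connected sum of lens spaces, and since $\det(Y)=5$ is prime while any nontrivial lens space summand has determinant at least two, $Y$ must be a single lens space of order $5$.

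The case $2+3=5$ is the substantive one, and will be handled by Theorem~\ref{thm:lp1-fillings}. By Theorem~\ref{thm:greene-classification} we have $Y_0=\rpthree$ and $Y_1=\pm L(3,1)$. Let $M$ denote the triad manifold, realized as the exterior in $\rpthree$ of the core $J$ of the Dehn filling producing $Y_0$. First I would check that $J$ is primitive in $\rpthree$: otherwise $J$ is nullhomologous, $H_1(M)\cong\Z\oplus\Z/2\Z$, and every Dehn filling of $M$ would have $H_1$ of even order, contradicting $|H_1(\pm L(3,1))|=3$. Thus $M$ is a homology $S^1\times D^2$ equipped with a pair of distance-one slopes whose fillings are $\rpthree=L(2,1)$ and $\pm L(3,1)$. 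Since $\rpthree$ is amphichiral, after replacing $M$ by its orientation reverse $\overline{M}$ if necessary, one may assume the two fillings are precisely $L(2,1)$ and $L(3,1)$. At this point Theorem~\ref{thm:lp1-fillings} applies with $p=2$ (noting $2\not\equiv 1\pmod{12}$) and concludes that $M$ is a solid torus. Hence $Y$ is a Dehn filling of a solid torus with $|H_1(Y)|=5$, so $Y$ is a lens space of order $5$.

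I expect that the bulk of the difficulty has already been absorbed by Theorem~\ref{thm:lp1-fillings}, which is tailor-made for this kind of consecutive-lens-space triad. The remaining work is relatively minor bookkeeping: confirming primitivity of the core $J\subset\rpthree$ so that $M$ is a homology $S^1\times D^2$, and absorbing the orientation ambiguity on $\pm L(3,1)$ using amphichirality of $\rpthree$ before applying that theorem.
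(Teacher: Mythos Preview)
Your proposal is correct and follows essentially the same route as the paper: the same $1+4$ versus $2+3$ case split, the same invocation of Proposition~\ref{prop:1-plus-n} (with your extra remark that $\det(Y)=5$ being prime forces a single lens space), and the same reduction of the $2+3$ case to Theorem~\ref{thm:lp1-fillings} after handling the orientation of $\pm L(3,1)$. Your explicit verification that $J$ is primitive (so that $M$ is a homology $S^1\times D^2$) is a detail the paper leaves implicit but which is indeed needed to invoke Theorem~\ref{thm:lp1-fillings}.
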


\begin{proof}
Suppose that $Y$ belongs to a triad $(Y,Y_0,Y_1)$ of formal L-spaces with $\det(Y_0) + \det(Y_1) = \det(Y) = 5$ and $\det(Y_0) \leq \det(Y_1)$.  In the case $1+4=5$, Proposition~\ref{prop:1-plus-n} says that $Y$ is a lens space, so we need only consider the case $2+3=5$, in which case Theorem~\ref{thm:greene-classification} says that $Y_0 = \rpthree$ and $Y_1 = \pm L(3,1)$.

The three members of the triad $(Y,Y_0,Y_1)$ all arise as Dehn fillings of the same manifold $M$ along slopes with pairwise distance 1.  In particular, either $M$ or $-M$ has two distance-1 slopes with Dehn fillings $L(2,1)$ and $L(3,1)$, where the sign is determined by whether $Y_1 = L(3,1)$ or $Y_1 = -L(3,1)$, so by Theorem~\ref{thm:lp1-fillings} we know that $\pm M$ and hence $M$ itself is $S^1 \times D^2$.  Since $Y$ is another Dehn filling of $M$, it must be a lens space $L(5,q)$.
\end{proof}

\subsection{Formal L-spaces of determinant 6}

\begin{theorem} \label{thm:det-6}
Let $Y$ be a formal L-space with $\det(Y)=6$.  Then $Y$ is either $\pm L(6,1)$ or $\pm L(3,1)\#\rpthree$.
\end{theorem}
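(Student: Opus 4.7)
Following the template used for Theorems~\ref{thm:qa4} and~\ref{thm:qa5}, I plan to decompose triads $(Y,Y_0,Y_1)$ of formal L-spaces with $\det(Y_0)+\det(Y_1)=6$ and $\det(Y_0)\le\det(Y_1)$ into the cases $1+5$, $2+4$, and $3+3$. The $1+5=6$ case is handled immediately by Proposition~\ref{prop:1-plus-n}, which combined with Theorem~\ref{thm:kmos} produces either $Y=\pm L(6,1)$ (from surgery on the unknot) or $Y=\pm L(3,1)\#\rpthree$ (from $\pm 6$-surgery on the trefoil), matching the desired conclusion.

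For the $2+4=6$ case, $Y_0=\rpthree$ and Theorem~\ref{thm:qa4} leaves $Y_1\in\{\pm L(4,1),\,\rpthree\#\rpthree\}$. Letting $M$ be the common exterior with core $J_0\subset\rpthree$, the divisibility $\gcd(2,4)=2$ rules out $J_0$ being primitive by Lemma~\ref{lem:distance-one}, so $J_0$ is nullhomologous. Then $H_1(M)\cong\ZZ\oplus\ZZ/2\ZZ$ and the distance-one filling along $a\mu\pm\lambda$ has first homology $\ZZ/a\oplus\ZZ/2\ZZ$, which is never cyclic of order $4$; this rules out $Y_1=\pm L(4,1)$. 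In the remaining subcase $Y_1=\rpthree\#\rpthree$ we must have $a=2$, and because $(\rpthree)_2(U)=\rpthree\#L(2,1)=\rpthree\#\rpthree$, Theorem~\ref{thm:surgery-characterization} forces $J_0$ to be the unknot. An unknot in $\rpthree$ has the surgery description $(\rpthree)_{p/q}(U)=\rpthree\#L(p,q)$, and a direct check of the slopes at distance one from both the meridian and the $2$-framed slope shows that the only resulting determinant-$6$ manifold is $Y=\pm L(3,1)\#\rpthree$, coming from the $\pm 3$-framed slope.

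For the $3+3=6$ case, $Y_0,Y_1\in\{L(3,1),-L(3,1)\}$. The divisibility $\gcd(3,3)=3$ once again forces $J_0$ to be nullhomologous via Lemma~\ref{lem:distance-one}, and the non-trivial filling is then a $\pm 1$-framed surgery in order for $H_1$ to be of order $3$. The Cochran--Gerges--Orr argument invoked in the proof of Lemma~\ref{lem:d-2p-Z} shows that $\pm 1$-framed surgery on a nullhomologous knot preserves the linking form, while $L(3,1)$ and $-L(3,1)$ have inequivalent linking forms $\tfrac{1}{3}$ and $\tfrac{2}{3}$, so the mixed-orientation subcase is impossible. In the same-orientation subcase, $(Y_0)_{\pm 1}(J_0)=Y_0=(Y_0)_{\pm 1}(U)$, so Theorem~\ref{thm:surgery-characterization} identifies $J_0$ with the unknot, and the same unknot-exterior slope computation produces $Y=\pm L(3,1)\#\rpthree$. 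The main obstacle I expect is the $2+4=6$ subcase with $Y_1=\rpthree\#\rpthree$: because $Y_1$ is reducible, the Heegaard Floer techniques of Section~\ref{sec:surgery25} do not apply directly, so one must combine the homology constraint from Lemma~\ref{lem:distance-one} with Gainullin's surgery characterization to pin down $J_0$ before the triad analysis reduces to combinatorics on the unknot exterior.
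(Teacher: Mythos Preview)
Your proposal is correct and follows essentially the same case-by-case analysis as the paper. The only cosmetic difference is in the $3+3$ mixed-orientation subcase: you rule it out by observing that $\pm 1$-surgery on a nullhomologous knot preserves the linking form while $L(3,1)$ and $-L(3,1)$ have inequivalent forms $\tfrac{1}{3}$ and $\tfrac{2}{3}$, whereas the paper cites \cite[Corollary~3.10]{CochranGergesOrr} directly for the same conclusion; these are the same argument in slightly different packaging.
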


Let $Y$ belong to a triad $(Y,Y_0,Y_1)$ of formal L-spaces with $\det(Y_0) + \det(Y_1) = \det(Y) = 6$ and $\det(Y_0) \leq \det(Y_1)$.  Again, Proposition~\ref{prop:1-plus-n} proves this theorem in the case $1+5=6$, so we need only consider two cases: $2+4=6$ and $3+3=6$.

\begin{proposition}
If $\det(Y_0)=2$ and $\det(Y_1) = 4$, then $Y = \pm L(3,1) \# \rpthree$.
\end{proposition}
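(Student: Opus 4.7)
My plan is to study the core $K_0$ of the $Y_0$-filling inside $\rpthree$, show that it must be nullhomologous, identify $Y_1$ precisely, and then apply Gainullin's Theorem~\ref{thm:surgery-characterization} to pin down $K_0$ as the unknot.

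By Theorem~\ref{thm:greene-classification}, $Y_0 = \rpthree$, and by Theorem~\ref{thm:qa4}, $Y_1 \in \{\pm L(4,1), \rpthree \# \rpthree\}$. Let $M$ be the common manifold with torus boundary realizing the triad, with slopes $\alpha_0, \alpha_1, \alpha$ giving $Y_0, Y_1, Y$ respectively. Since $H_1(\rpthree) \cong \Z/2\Z$, the knot $K_0 \subset \rpthree$ is either primitive or nullhomologous. If $K_0$ were primitive, then Lemma~\ref{lem:distance-one} would force $\Delta(\alpha_0, \alpha_1)$ to be a multiple of $\gcd(2, 4) = 2$, contradicting the triad hypothesis. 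So $K_0$ must be nullhomologous, which gives $H_1(M) \cong \Z \oplus \Z/2\Z$ with the rational longitude (the Seifert longitude $\ell$ of $K_0$) nullhomologous in $M$ and the meridian $\mu = \alpha_0$ generating the $\Z$-summand.

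Any slope at distance one from $\alpha_0 = \mu$ has the form $k\mu + \epsilon\ell$ with $\epsilon = \pm 1$, whose filling has first homology $\Z/k\Z \oplus \Z/2\Z$. To realize $|H_1(Y_1)| = 4$ I need $|k| = 2$, giving $H_1(Y_1) \cong (\Z/2\Z)^2$; since this is not cyclic, $Y_1$ cannot be $\pm L(4,1)$, forcing $Y_1 = \rpthree \# \rpthree$. Normalizing signs, I may take $\alpha_1 = 2\mu + \ell$, so that $Y_1 = \rpthree_2(K_0) = \rpthree \# \rpthree$. Since $\rpthree_2(U) = \rpthree \# L(2,1) = \rpthree \# \rpthree$ as oriented manifolds, the $\hfp$ groups of $\rpthree_2(K_0)$ and $\rpthree_2(U)$ coincide as absolutely graded $\F[U]$-modules. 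Because $\rpthree$ is an L-space and $K_0$ is nullhomologous, Theorem~\ref{thm:surgery-characterization} identifies $K_0$ with the unknot.

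Finally, writing $\alpha = p\mu + q\ell$, the distance-one conditions with $\alpha_0$ and $\alpha_1$ force $|q| = 1$ and (together with $|H_1(Y)| = 6$, which demands $|p| = 3$) yield $Y = \rpthree_{\pm 3}(U)$. Because the unknot in $\rpthree$ sits inside a $3$-ball, this surgery yields $\rpthree \# L(3, \pm 1) = \pm L(3,1) \# \rpthree$, as required. The main delicate step is the combination of the parity obstruction ruling out primitive $K_0$ with the homology bookkeeping that forces $Y_1 = \rpthree \# \rpthree$ rather than $\pm L(4,1)$; once these are in place, Theorem~\ref{thm:surgery-characterization} and a short surgery computation finish the argument.
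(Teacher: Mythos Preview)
Your proof is correct and follows essentially the same route as the paper: you use Lemma~\ref{lem:distance-one} to force $K_0$ to be nullhomologous, a homology computation to rule out $Y_1=\pm L(4,1)$ in favor of $\rpthree\#\rpthree$, and then Theorem~\ref{thm:surgery-characterization} to identify $K_0$ with the unknot. The only cosmetic difference is that you set up explicit $(\mu,\ell)$ coordinates on $\partial M$, whereas the paper argues directly with surgery coefficients; your ``normalizing signs'' step is slightly informal (the surgery coefficient $\pm 2$ is intrinsic), but since $(\rpthree)_{\pm 2}(U)=\rpthree\#\rpthree$ in either case, this does not affect the argument.
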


\begin{proof}
We must have $Y_0 = \rpthree$, and $Y_1$ is either $\pm L(4,1)$ or $\rpthree \# \rpthree$  by Theorem~\ref{thm:qa4}.  Let $J_0 \subset \rpthree$ be the knot on which surgery produces $Y_1$ and $Y$.  A filling of its exterior at distance one from the trivial filling produces $Y_1$, and $\gcd(|H_1(Y_0)|,|H_1(Y_1)|)=2$, so $J_0$ cannot be primitive by Lemma~\ref{lem:distance-one}; thus $J_0$ is nullhomologous.  

Now since some $p$-surgery on $J_0$ produces $Y_1$ we have $H_1(Y_1) = \Z/2\Z \oplus \Z/p\Z$, and in particular it cannot be $\Z/4\Z = H_1(\pm L(4,1))$.  We must have $Y_1 = \rpthree \# \rpthree$ and $p=\pm2$.  But then $p$-surgery on $J_0$ is homeomorphic to $p$-surgery on the unknot in $\rpthree$, so by Theorem~\ref{thm:surgery-characterization}, $J_0$ must be the unknot.  Therefore $Y$ is the result of $\pm 3$-surgery on the unknot in $\rpthree$, namely $\pm L(3,1) \# \rpthree$, as claimed.
\end{proof}

\begin{proposition}
If $\det(Y_0) = \det(Y_1) = 3$, then $Y$ is $\pm L(3,1) \# \rpthree$.
\end{proposition}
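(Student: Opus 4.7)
My plan is to realize $Y$ as a Dehn surgery on a nullhomologous knot in $Y_0$, show that the knot must be unknotted, and read off the answer. By Theorem~\ref{thm:greene-classification} each of $Y_0$ and $Y_1$ equals $\pm L(3,1)$. Let $M$ be the common torus-boundary manifold producing the triad, and let $J_0 \subset Y_0$ be the core of the solid torus attached to recover $Y_0$, so that $Y_1$ and $Y$ are non-trivial distance-one surgeries on $J_0$. Since $|H_1(Y_0)|=3$ is prime, $J_0$ is either primitive or nullhomologous; but in the primitive case Lemma~\ref{lem:distance-one} would force the distance from the trivial filling to the $Y_1$ filling to be a multiple of $\gcd(|H_1(Y_0)|,|H_1(Y_1)|)=3$, which is impossible. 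Thus $J_0$ is nullhomologous in $Y_0$.

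With $J_0$ nullhomologous, the Seifert longitude $\lambda$ is trivial in $H_1(M)\cong H_1(Y_0)\oplus\Z\langle\mu\rangle$, so a filling of slope $a\mu+b\lambda$ has first homology of order $3|a|$. Combining this with the distance-one constraints determines, up to reorientation of $\mu$, that $Y_1=(Y_0)_\epsilon(J_0)$ and $Y=(Y_0)_{2\epsilon}(J_0)$ for some $\epsilon\in\{\pm 1\}$.

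The main obstacle is that Corollary~\ref{cor:lens-cosmetic} requires $Y_1$ to be orientation-preserving homeomorphic to $Y_0$, not merely to $\pm Y_0$. I would handle this by invoking the second part of Lemma~\ref{lem:d-2p-Z}: since $Y_1$ arises from $Y_0$ by $\pm 1$-surgery on the nullhomologous knot $J_0$, their multisets of $d$-invariants must agree modulo $2$. A direct calculation from $d(L(3,1),i)=\frac{(2i-3)^2}{12}-\frac{1}{4}$ gives $d(L(3,1))=\{\tfrac{1}{2},-\tfrac{1}{6},-\tfrac{1}{6}\}$ and $d(-L(3,1))=\{-\tfrac{1}{2},\tfrac{1}{6},\tfrac{1}{6}\}$, and these sets differ modulo $2$ (the entries $\pm\tfrac{1}{2}$ are separated by $1$, not by an even integer). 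Hence $Y_1\cong Y_0$ as oriented manifolds.

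Given $Y_1\cong Y_0$, Corollary~\ref{cor:lens-cosmetic} now applies: $J_0$ is a knot in the L-space $Y_0$ of prime first homology with a non-trivial distance-one surgery yielding $Y_0$ itself, so $J_0$ must be the unknot. Therefore $Y=(Y_0)_{2\epsilon}(U)=Y_0\,\#\,L(2\epsilon,1)=\pm L(3,1)\,\#\,\rpthree$, which is the desired conclusion.
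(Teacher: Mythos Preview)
Your proof is correct and follows the same overall structure as the paper's: show $J_0$ is nullhomologous via Lemma~\ref{lem:distance-one}, identify $Y_1$ as $\pm 1$-surgery on $J_0$, rule out $Y_1\cong -Y_0$, and then apply the surgery characterization of the unknot to conclude. The only substantive difference is how you exclude the opposite-orientation case $Y_1\cong -Y_0$. The paper dispatches this by citing \cite[Corollary~3.10]{CochranGergesOrr} directly, whereas you compute the $d$-invariants of $\pm L(3,1)$ and invoke Lemma~\ref{lem:d-2p-Z}. Your route has the virtue of staying within the paper's own toolbox; note, though, that what you actually use is a step established in the \emph{proof} of Lemma~\ref{lem:d-2p-Z} (that $\pm 1$-surgery on a nullhomologous knot preserves the multiset of $d$-invariants mod $2$) rather than the lemma's stated conclusion about linking forms. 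A still lighter variant would be to observe that $\pm 1$-surgery on a nullhomologous knot preserves the linking form, and $\tfrac{1}{3}$ and $-\tfrac{1}{3}$ are inequivalent on $\ZZ/3\ZZ$ since $-1$ is not a square mod $3$; this is essentially the content of the Cochran--Gerges--Orr citation.
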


\begin{proof}
We must have $Y_0 = \pm L(3,1)$ and $Y_1 = \pm L(3,1)$, though not necessarily with the same orientation.  Again, we identify $J_0$ in $Y_0$ for which $Y_1$ and $Y$ are obtained by distance one surgeries.  Applying Lemma~\ref{lem:distance-one} to $Y_1$, we see that $J_0$ must be nullhomologous.  If $Y_0$ and $Y_1$ have the same orientation, then Theorem~\ref{thm:surgery-characterization} says that $J_0$ is the unknot, and then $\det(Y)=6$ implies that $Y = Y_0 \# \rpthree = \pm L(3,1) \# \rpthree$.

In the remaining case we have $Y_0$ homeomorphic to $-Y_1$, and we claim that this can never happen.  Indeed, in this case $Y_1$ must be $\pm 1$-surgery on $J_0 \subset Y_0$, since these are the only distance-one surgeries which preserve the order of $H_1$.  Since $J_0$ is nullhomologous, this contradicts \cite[Corollary 3.10]{CochranGergesOrr}.  
\end{proof}

\subsection{Formal L-spaces of determinant 7}

\begin{theorem}\label{thm:qa7}
Let $Y$ be a formal L-space with $\det(Y)=7$.  Then $Y$ is a lens space.
\end{theorem}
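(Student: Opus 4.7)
I would follow the pattern of Theorems~\ref{thm:qa4}--\ref{thm:det-6} and split into cases by the pair $(\det(Y_0),\det(Y_1))$ in a triad $(Y,Y_0,Y_1)$ of formal L-spaces with $\det(Y_0)+\det(Y_1)=7$ and $\det(Y_0)\le\det(Y_1)$.  Since $Y_0$ and $Y_1$ are rational homology spheres, the possibilities are $1+6$, $2+5$, and $3+4$.  The case $1+6=7$ is immediate from Proposition~\ref{prop:1-plus-n}: it yields that $Y$ is a connected sum of lens spaces of total order $7$, and since $7$ is prime $Y$ itself must be a single lens space.

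For the case $2+5=7$, Theorem~\ref{thm:greene-classification} forces $Y_0=\rpthree$ and Theorem~\ref{thm:qa5} forces $Y_1$ to be a lens space of order $5$.  The three members of the triad are distance-one Dehn fillings of a common manifold $M$ with torus boundary, so the hypotheses of Theorem~\ref{thm:rp3-l5q} apply to show that $M$ is a solid torus.  Therefore $Y$ is a lens space.

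The substantive case is $3+4=7$.  By Theorem~\ref{thm:greene-classification} we have $Y_0=\pm L(3,1)$, and by Theorem~\ref{thm:qa4} the manifold $Y_1$ is one of $\pm L(4,1)$ or $\rpthree\#\rpthree$.  I would view $Y_1$ and $Y$ as distance-one surgeries on a knot $J_0\subset Y_0$; since $|H_1(Y_0)|=3$ is prime, $J_0$ is either nullhomologous or primitive.  If $J_0$ were nullhomologous then $|H_1(Y_1)|$ would be a multiple of $3$, contradicting $|H_1(Y_1)|=4$.  Hence $J_0$ is primitive and its exterior $M$ satisfies $H_1(M)\cong\Z$.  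In particular every Dehn filling of $M$ has cyclic first homology, which rules out $Y_1=\rpthree\#\rpthree$ and leaves $Y_1=\pm L(4,1)$.

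To finish, I would put this data into the form required by Theorem~\ref{thm:lp1-fillings}.  Writing the filling slopes of orders 3 and 4 as $3\mu+c\lambda$ and $4\mu+d\lambda$ with $\lambda$ the rational longitude, the distance condition $|3d-4c|=1$ combined with the meridian-change symmetry $\mu\mapsto\mu+k\lambda$ pins down $(c,d)$ as $\pm(1,1)$; after reversing the orientation of $M$ if necessary, we may take $(c,d)=(1,1)$, so the two fillings become $L(3,1)$ and $L(4,1)$.  Since $3\not\equiv 1\pmod{12}$, Theorem~\ref{thm:lp1-fillings} now forces $M$ to be a solid torus, so $Y$ is a lens space as desired.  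The main obstacle is this last orientation/slope bookkeeping, but all of the deeper technical work has already been carried out in Sections~\ref{sec:consecutive} and \ref{sec:surgery25}.
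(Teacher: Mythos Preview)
Your treatment of the cases $1+6=7$ and $2+5=7$ is correct and matches the paper. In the $3+4=7$ case you also correctly argue that $J_0$ must be primitive and hence that $Y_1=\rpthree\#\rpthree$ is excluded, leaving $Y_1=\pm L(4,1)$.

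The gap is in your final paragraph. Normalizing the slopes to $3\mu+\lambda$ and $4\mu+\lambda$ does \emph{not} force the resulting fillings to be $L(3,1)$ and $L(4,1)$; it only says that $Y_0$ and $Y_1$ are fillings along those slopes, whatever $Y_0$ and $Y_1$ happen to be. Reversing the orientation of $M$ reverses the orientation of \emph{both} fillings simultaneously, so your reduction only works when $(Y_0,Y_1)$ is $(L(3,1),L(4,1))$ or $(-L(3,1),-L(4,1))$. The mixed cases $(L(3,1),-L(4,1))$ and $(-L(3,1),L(4,1))$ are not covered: for these, neither $M$ nor $-M$ has distance-one fillings equal to $L(3,1)$ and $L(4,1)$, so Theorem~\ref{thm:lp1-fillings} does not apply directly.

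The paper disposes of the mixed case separately by a linking-form argument. With the slopes normalized as $3\mu+\epsilon\lambda$ and $4\mu+\epsilon\lambda$, the homology sphere filling $Z$ along $\mu$ realizes $Y_0=Z_{\epsilon\cdot 3}(K)$ and $Y_1=Z_{\epsilon\cdot 4}(K)$, so $Y_0$ and $Y_1$ have linking forms $\frac{\epsilon}{3}$ and $\frac{\epsilon}{4}$ for the \emph{same} sign $\epsilon$. Since $\frac{1}{3}\not\sim-\frac{1}{3}$ and $\frac{1}{4}\not\sim-\frac{1}{4}$, this forces $Y_0$ and $Y_1$ to have matching signs, ruling out the mixed case. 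You need to insert an argument of this sort before you can invoke Theorem~\ref{thm:lp1-fillings}.
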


Once again, we let $Y$ belong to a triad of formal L-spaces $(Y,Y_0,Y_1)$ with $\det(Y_0)+\det(Y_1)=\det(Y)=7$ and $\det(Y_0) \leq \det(Y_1)$.  In the case $1+6=7$, Proposition~\ref{prop:1-plus-n} says that $Y$ must be a lens space; we wish to prove the same conclusion for $2+5=7$ and $3+4=7$.

If $J_0 \subset Y_0$ is a knot whose exterior produces each of these 3-manifolds through pairwise distance-one fillings, we note that $J_0$ must generate $H_1(Y_0)$, since $\det(Y_0)$ and $\det(Y_1)$ are relatively prime.  In the case $2+5=7$, we have $Y_0 = \rpthree$ and $Y_1 = L(5,q)$ for some $q$ by Theorem~\ref{thm:qa5}, so Theorem~\ref{thm:rp3-l5q} says that the exterior of $J_0$ is a solid torus, hence $Y$ is a lens space.  Thus the only case which remains to be proved is $3+4=7$.

\begin{proposition}
If $\det(Y_0) = 3$ and $\det(Y_1) = 4$, then $Y = L(7,q)$ for some $q$.
\end{proposition}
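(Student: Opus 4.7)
The plan is to show that, after possibly reversing the orientation of the entire triad, we are in the setting of Theorem~\ref{thm:lp1-fillings} with $p = 3$, which will then deliver the conclusion.

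First, I would observe that since $\gcd(|H_1(Y_0)|, |H_1(Y_1)|) = \gcd(3, 4) = 1$, any torsion in $H_1(M)$ (where $M$ is the common exterior of the triad) must divide both $3$ and $4$, hence be trivial.  Thus $M$ is a homology $S^1 \times D^2$; in particular $H_1(Y_1)$ is cyclic, which together with Theorem~\ref{thm:qa4} rules out $Y_1 = \rpthree \# \rpthree$.  Combined with Theorem~\ref{thm:greene-classification}, this leaves $Y_0 \in \{L(3,1), L(3,2)\}$ and $Y_1 \in \{L(4,1), L(4,3)\}$.

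Next, I would invoke the proof of Lemma~\ref{lem:lp1f-meridian} --- which uses only the orders of $H_1$ of the fillings, not their specific lens space identifications --- to find a meridian $\mu$ and sign $\epsilon \in \{\pm 1\}$ such that the slopes producing $Y_0$ and $Y_1$ are $3\mu + \epsilon\lambda$ and $4\mu + \epsilon\lambda$.  Dehn filling along $\mu$ then yields a homology sphere $Y^*$ with core $K^*$ satisfying $Y_0 = Y^*_{3/\epsilon}(K^*)$ and $Y_1 = Y^*_{4/\epsilon}(K^*)$.  Since $Y^*_{p/q}(K^*)$ has linking form $q/p$, the classes $Y_0$ and $Y_1$ carry linking forms $\epsilon/3$ and $\epsilon/4$ respectively.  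Because $-1$ is a square modulo neither $3$ nor $4$, the only combinations consistent with the list from the previous paragraph are $(Y_0, Y_1) = (L(3,1), L(4,1))$ with $\epsilon = +1$ or $(L(3,2), L(4,3))$ with $\epsilon = -1$; the ``mixed'' cases $(L(3,1), L(4,3))$ and $(L(3,2), L(4,1))$ are forbidden.

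These two surviving possibilities are exchanged by an overall orientation reversal of the triad, so after applying such a reversal if necessary we may assume $(Y_0, Y_1) = (L(3,1), L(4,1))$.  Theorem~\ref{thm:lp1-fillings} applied with $p = 3$ (noting $3 \not\equiv 1 \pmod{12}$) will then force $M$ to be a solid torus, so $Y$ is a lens space, and $|H_1(Y)| = 7$ gives $Y = L(7, q)$ for some $q$.  The main obstacle I anticipate is the linking-form bookkeeping used to eliminate the ``mixed'' oriented combinations; once those are ruled out, Theorem~\ref{thm:lp1-fillings} does the remaining work immediately.
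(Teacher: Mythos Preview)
Your proposal is correct and follows essentially the same approach as the paper: both rule out $Y_1 = \rpthree\#\rpthree$ via cyclicity of $H_1$, use the linking-form argument (exactly as in Lemma~\ref{lem:lp1f-epsilon}) to eliminate the mixed-orientation pairs $(L(3,1),-L(4,1))$ and $(-L(3,1),L(4,1))$, and then apply Theorem~\ref{thm:lp1-fillings} with $p=3$ to the surviving cases. The only difference is organizational---the paper first dispatches the matched-orientation cases via Theorem~\ref{thm:lp1-fillings} and then rules out the mixed cases, whereas you do the linking-form elimination first---but the content is identical.
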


\begin{proof}
We have $Y_0 = \pm L(3,1)$ and $J_0 \subset Y_0$ is primitive, so any Dehn surgery on $J_0$ must have cyclic first homology.  This means that $Y_1$ cannot be $\rpthree \#\rpthree$, so we have $Y_1 = \pm L(4,1)$ by Theorem~\ref{thm:qa4}.

If $Y_0 = L(3,1)$ and $Y_1 = L(4,1)$, then the exterior $M$ of $J_0$ must be a solid torus by Theorem~\ref{thm:lp1-fillings}, so $Y$ is necessarily a lens space.  The same argument applies if $Y_0 = -L(3,1)$ and $Y_1 = -L(4,1)$, since $L(3,1)$ and $L(4,1)$ are then a pair of distance-one fillings of $-M$.

In the remaining cases we have $Y_0 = \pm L(3,1)$ and $Y_1 = \mp L(4,1)$, and we claim that this is impossible.  After possibly reversing orientation, we can assume that $Y_0 = L(3,1)$ and $Y_1 = -L(4,1)$.  As in the proof of Lemma~\ref{lem:lp1f-meridian}, we can choose a meridian $\mu_0$ on the exterior $M$ of $J_0$ so that $Y_0$ is the result of Dehn filling along a curve $\gamma_0 = 3\mu_0 + n\lambda$ for some $n$ of the form $3k \pm 1$, with $\lambda$ the rational longitude.  Using the meridian $\mu = \mu_0 + k\lambda$, we have $\gamma_0 = 3\mu \pm \lambda$.  If $Y_1$ results from Dehn filling along $\gamma_1$, then since $|H_1(Y_1)|=4$ and $\Delta(\gamma_0,\gamma_1)=1$ we must have $\gamma_1=4\mu \pm \lambda$.  In particular, if $Z$ is the homology sphere resulting from Dehn filling along $\mu$ with core $K\subset Z$, then we have $Y_0 = Z_{\pm 3}(K)$ and $Y_1 = Z_{\pm 4}(K)$.
  
From this, we see that $Y_0$ (respectively $Y_1$) has linking form $\pm \frac{1}{3}$ (respectively $\pm \frac{1}{4}$).  Since $Y_0 = L(3,1)$ has linking form $+\frac{1}{3}$, which is not equivalent to $-\frac{1}{3}$, we see that $Y_1$ must have linking form $+\frac{1}{4}$.  However, this is not equivalent to $-\frac{1}{4}$, the linking form of $-L(4,1) = Y_1$.  This yields the desired contradiction.
\end{proof}

\bibliographystyle{alpha}
\bibliography{References}

\end{document}